\numberwithin{equation}{section}
\font\tencyr=wncyr10 
\font\tencyi=wncyi10 
\font\tencysc=wncysc10 
\def\rus{\tencyr\cyracc}
\def\rusi{\tencyi\cyracc}
\def\rusc{\tencysc\cyracc}
\newtheorem{thm}{Theorem}[section]
\newtheorem{lm}[thm]{Lemma}
\newtheorem{cl}[thm]{Corollary}
\newtheorem{prop}[thm]{Proposition}
\theoremstyle{remark}
\newtheorem{rmk}[thm]{Remark}
\theoremstyle{definition}
\newtheorem{ex}[thm]{Example}
\newtheorem{df}{Definition}
\newtheorem*{rema}{Remark}
\newcommand {\g}{{\mathfrak g}}
\newcommand {\fN}{\mathfrak{N}}
\newcommand {\ut}{{\mathfrak u}}
\newcommand {\gc}{{\mathcal C}}
\newcommand {\co}{{\mathcal O}}
\newcommand {\BA}{{\mathbb A}}
\newcommand {\BP}{{\mathbb P}}
\newcommand {\BZ}{{\mathbb Z}}
\newcommand {\BN}{{\mathbb N}}
\newcommand {\BQ}{{\mathbb Q}}
\newcommand {\BR}{{\mathbb R}}
\newcommand {\sfr}{{\mathsf R}}
\newcommand {\md}{/\!\!/}
\newcommand {\isom}{\stackrel{\sim}{\longrightarrow}}
\newcommand{\lb}{\lambda}
\newcommand{\ap}{\alpha}
\renewcommand{\le}{\leqslant}
\renewcommand{\ge}{\geqslant}
\newcommand{\curle}{\preccurlyeq}
\newcommand{\eus}{\EuScript}
\newcommand {\Ann}{{\mathsf{ann}}}
\newcommand {\codim}{{\mathrm{codim\,}}}
\newcommand {\ed}{\mathrm{edim\,}}
\newcommand {\gr}{{\mathsf{gr}}}
\newcommand {\hd}{\mathsf{hd}}
\newcommand {\hot}{{\mathrm{ht\,}}}
\newcommand {\Lie}{{\mathrm{Lie\,}}}
\newcommand {\rk}{{\mathsf{rk\,}}}
\newcommand {\spe}{{\mathsf{Spec\,}}}
\newcommand {\srk}{{\mathsf{srk\,}}}
\newcommand {\GR}[2]{{\textrm{{\bf #1}}}_{#2}}
\newcommand {\ov}{\overline}
\newcommand {\un}{\underline}
\newcommand {\beq}{\begin{equation}}
\newcommand {\eeq}{\end{equation}}
\newcommand {\bfp}{{\boldsymbol{p}}}
\newcommand {\bv}{{\boldsymbol{v}}}
\newcommand {\bof}{{\boldsymbol{f}}}
\newcommand {\bbk}{\Bbbk}
\begin{document}
\setlength{\parskip}{2pt plus 4pt minus 0pt}
\hfill {\scriptsize September 14, 2011}
\vskip1ex

\title[Quotients by $U'$-actions]%
{Quotients by actions of the derived group of a maximal unipotent subgroup}
\author[D.\,Panyushev]{Dmitri I.~Panyushev}
\address[]{%
Institute for Information Transmission Problems of the R.A.S., \hfil\break\indent
 B. Karetnyi per. 19, Moscow 
127994, Russia}
\email{panyushev@iitp.ru}
\keywords{semisimple algebraic group, quotient, equidimensional morphism,  invariant}
\subjclass[2010]{14L30, 17B20, 22E46}
\begin{abstract}
Let $U$ be a maximal unipotent subgroup of a connected semisimple group $G$ and $U'$ the derived group of $U$. If $X$ is an affine $G$-variety, then the algebra of $U'$-invariants, $k[X]^{U'}$, is finitely generated and the quotient morphism $\pi: X \to X\md U'=\spe
k[X]^{U'}$ is well-defined. In this article, we study properties of such quotient morphisms, e.g. the property that all the fibres of $\pi$ are equidimensional. We also establish an analogue of the Hilbert-Mumford criterion for the null-cones with respect to $U'$-invariants.
\end{abstract}
\maketitle

\section*{Introduction}
\noindent 
The ground  field $\bbk$ is algebraically closed and of characteristic zero.
Let $G$ be a semisimple  algebraic group with Lie algebra $\g$. 
Fix a maximal unipotent subgroup  $U\subset G$ and a maximal torus $T$ of the Borel
subgroup $B=N_G(U)$.  Set $U'=(U,U)$.
Let $X$ be an irreducible affine variety acted upon by $G$.
The algebra of covariants (or, $U$-invariants) $\bbk[X]^U$ is a classical and 
important object in Invariant Theory. It is known that $\bbk[X]^U$ is finitely generated 
and has many other useful properties and applications, see e.g. 
\cite[Ch.\,3, \S\,3]{kr84}. For a factorial conical variety $X$ with rational singularities, 
there are interesting relations between the Poincar\'e series of the graded algebras 
$\bbk[X]$  and  $\bbk[X]^U$, see \cite{br83}, \cite[Ch.\,5]{disser}.
Similar results for $U'$-invariants are obtained in \cite{odno-sv}.

A  surprising observation that stems from  \cite{odno-sv} is that, to a great extent, the theory
of $U'$-invariants is parallel to that of $U$-invariants. In this article, we elaborate on further
aspects of this parallelism. 
Our main object  is the quotient 
$\pi_{X,U'}:  X\to X\md U'=\spe\!(\bbk[X]^{U'})$. 
Specifically, we are interested in the property that
$X\md U'$ is an affine space and/or the morphism $\pi_{X,U'}$ is equidimensional (i.e.,
all the fibres of $\pi_{X,U'}$ have the same dimension). 
Our ultimate goal is to prove for $U'$ an analogue of the Hilbert--Mumford criterion
and to provide a classification of the irreducible representations $V$ of simple
algebraic groups $G$ such that $\bbk[V]$ is a free $\bbk[V]^{U'}$-module.
We also develop some theory for $U'$-actions on the affine
prehomogeneous horospherical varieties of $G$ ($\eus S$-{\it varieties\/} in terminology of 
\cite{vp72}). As $U'=\{1\}$ for $G=SL_2$, one sometimes has to assume that $G$ has no simple factors $SL_2$.

If $X$ has a $G$-fixed point, say $x_0$,
then the fibre of $\pi_{X,U'}$ containing $x_0$ is called
the {\it null-cone}, and we denote it by $\fN_{U'}(X)$.
(The null-cone $\fN_{H}(X)$ can be defined for any subgroup $H\subset G$
such that $\bbk[X]^H$ is finitely generated.)
If $G$ has no simple factors $SL_2$ nor $SL_3$, then
the canonical affine model of $\bbk[G/U']$ constructed in \cite[Sect.\,2]{odno-sv} consists of unstable points in the sense of GIT, and using this property we 
give a characterisation of $\fN_{U'}(X)$ in terms of one-parameter subgroups of $T$.
We call it the {\it Hilbert--Mumford criterion for $U'$.} This is inspired by similar
results of Brion for $U$-invariants \cite[Sect.\,IV]{br83}. It is easily seen that 
$\fN_{U'}(X)\subset \fN_{G}(X)$. Therefore $G{\cdot}\fN_{U'}(X)\subset \fN_{G}(X)$.
Using the Hilbert--Mumford criterion for $U'$ we prove that  
$G{\cdot}\fN_{U'}(X)= \fN_{G}(X)$ whenever
$G$ has no simple factors $SL_n$. This should be compared with the result of Brion 
\cite{br83} that
$G{\cdot}\fN_{U}(X)= \fN_{G}(X)$ for all $G$.

The $\eus S$-varieties are in  one-to-one correspondence with the finitely
generated monoids $\mathfrak S$ in the monoid $\mathfrak X_+$ of dominant weights,
and  the $\eus S$-variety corresponding to $\mathfrak S\subset \mathfrak X_+$ is denoted 
by $\gc(\mathfrak S)$.  We give exhaustive answers to three natural problems related 
to the  actions of $U'$ on $\eus S$-varieties. 
A set of fundamental weights $M$ is said to be {\it sparse\/} if the corresponding nodes 
of the Dynkin diagram are disjoint and, moreover, there does not exist any node (not in $M$)
that is adjacent to two nodes from $M$.
Our results are:

\begin{itemize}
\item[\sf a)] \  $\bbk[\gc(\mathfrak S)]^{U'}$ is a polynomial algebra {\sl if and only if\/}
the monoid $\mathfrak S$ is
generated by a set of fundamental weights;

\item[\sf b)] \    $\bbk[\gc(\mathfrak S)]^{U'}$ is a polynomial algebra and 
$\pi_{\gc(\mathfrak S), U'}$ is  equidimensional {\sl if and only if\/} the monoid $\mathfrak S$ is
generated by a {sparse} set of fundamental weights;

\item[\sf c)] \  the morphism $\pi_{\gc(\mathfrak S), U'}$ is equidimensional 
{\sl if and only if\/}  the convex polyhedral  cone $\BR^+\mathfrak S$ is generated by a 
sparse set of fundamental weights. 
(In particular, the cone $\BR^+\mathfrak S$ is simplicial.)
\end{itemize}
\noindent
Part a) is rather easy, while parts b) and c) require technical details related to
the Bruhat decomposition of the  flag variety associated with 
$\gc(\mathfrak S)$. 
If $\mathfrak S$ has one generator, say $\lb$, and $\sfr(\lb)$ is a simple $G$-module with
highest weight $\lb$,
then $\gc(\mathfrak S)$ is the closure of
the orbit of highest weight vectors in the dual $G$-module $\sfr(\lb)^*$. Such a variety is denoted by $\gc(\lb)$.
As in \cite{vp72}, we say that $\gc(\lb)$ is an \textsf{HV}-{\it variety}. Our results for
\textsf{HV}-varieties are more complete. For instance, we compute the homological
dimension of $\gc(\lb)\md U'$ and prove that $\fN_{U'}(\gc(\lb))$ is always of codimension~2
in $\gc(\lb)$. The criterion of part b) is then 
transformed into a sufficient condition applicable to a wider class of affine varieties:

\begin{thm}    \label{thm-intro:deformation}
Suppose that $G$ acts on  an irreducible affine variety $X$ such that 
(1) \ $\bbk[X]^{U}$ is a polynomial
algebra and  (2) \ the weights of free generators are fundamental, pairwise distinct, 
and form a sparse set.
Then $\bbk[X]^{U'}$ is also polynomial, of Krull dimension $2\dim X\md U$,
and the quotient $\pi_{X,U'}: X\to X\md U'$ is equidimensional.  
\end{thm}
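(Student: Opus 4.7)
The plan is to exhibit $X$ as a $G$-equivariant graded deformation of the $\eus S$-variety $\gc(\mathfrak S)$ attached to the monoid $\mathfrak S=\BZ_{\ge 0}\lb_1+\cdots+\BZ_{\ge 0}\lb_m$ generated by the weights of the free generators of $\bbk[X]^U$, and then to transport the conclusions of item (b) of the introduction from $\gc(\mathfrak S)$ back to $X$. Because sparse fundamental weights are linearly independent in $\mathfrak X_+$, every $\lb\in\mathfrak S$ is uniquely of the form $\sum n_i\lb_i$; together with polynomiality of $\bbk[X]^U$ this forces $\dim\bbk[X]^U_\lb=1$ for $\lb\in\mathfrak S$ and $0$ otherwise, so that $\bbk[X]\cong\bigoplus_{\lb\in\mathfrak S}\sfr(\lb)$ as $G$-modules---exactly the $G$-module decomposition of $\bbk[\gc(\mathfrak S)]$.

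To upgrade this vector-space identification to an algebra statement, I would equip $\bbk[X]$ with the $G$-stable filtration $F^{\curle\lb}\bbk[X]=\bigoplus_{\mu\curle\lb}\bbk[X]_{(\mu)}$ by the dominance order on $\mathfrak X_+$. In the associated graded algebra only the Cartan component of each product survives, so $\gr\bbk[X]\cong\bbk[\gc(\mathfrak S)]$ as $G$-algebras. Since the filtration is $U'$-stable and $U'$ is connected, taking $U'$-fixed points commutes with $\gr$, giving $\gr(\bbk[X]^{U'})\cong\bbk[\gc(\mathfrak S)]^{U'}$. Item (b) of the introduction, applied to the sparse monoid $\mathfrak S$, tells us the right-hand side is polynomial of Krull dimension $2m$ and that $\pi_{\gc(\mathfrak S),U'}$ is equidimensional.

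Next, I would lift $T$-homogeneous free generators $h_1,\ldots,h_{2m}$ of $\gr(\bbk[X]^{U'})$ to weight-matched elements $\tilde h_i\in\bbk[X]^{U'}$ and run the standard filtered-to-graded argument: a nontrivial polynomial relation among the $\tilde h_i$ would pass to a relation among the $h_i$ after taking leading symbols, and generation follows from the separated, exhaustive character of the filtration. This yields $\bbk[X]^{U'}\cong\bbk[\tilde h_1,\ldots,\tilde h_{2m}]$, which is polynomial of Krull dimension $2m=2\dim X\md U$, and therefore $X\md U'\cong\bbk^{2m}$.

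The main obstacle will be transferring equidimensionality. My plan is to form the Rees algebra of the filtration, obtaining a $G\times\mathbb G_m$-equivariant flat family $\mathcal X\to\mathbb A^1$ with special fibre $\gc(\mathfrak S)$ at $t=0$ and general fibre $X$, together with the induced quotient $\mathcal X\md U'\to\mathbb A^1$, a flat family whose fibres are all copies of $\bbk^{2m}$. Upper semi-continuity of fibre dimension, applied to the $\mathbb G_m$-equivariant morphism $\mathcal X\to\mathcal X\md U'$, will then propagate equidimensionality of $\pi_{\gc(\mathfrak S),U'}$ at $t=0$ to $\pi_{X,U'}$ at $t\ne 0$: the generic fibre dimension over every $t$ equals $\dim X-2m$ by surjectivity, while upper semi-continuity forces all fibres to achieve this dimension. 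The technical care lies entirely in setting up the Rees degeneration so that the $U'$-quotient on the whole family is well-behaved---once that is done, both the polynomiality and the equidimensionality of $\pi_{X,U'}$ fall out of item (b).
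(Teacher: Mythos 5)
Your overall strategy is the one the paper itself uses: under hypotheses (1)--(2) the algebra $\bbk[X]$ is multiplicity free with highest weights exactly $\mathfrak S$, one degenerates $X$ to $\gc(\mathfrak S)$ via the Rees algebra of a $G$-stable filtration (Popov's ``contraction of actions''), and then transfers polynomiality and equidimensionality from Theorem~\ref{thm:main-eq-S}. Two of your sub-steps are legitimate variants of the paper's: the identification $\gr(\bbk[X]^{U'})\cong\bbk[\gc(\mathfrak S)]^{U'}$ does hold, but because the filtration steps are $G$-submodules and $G$ is reductive (so the filtration splits $G$-, hence $U'$-, equivariantly) --- not because $U'$ is connected, since invariants of a unipotent group are not an exact functor; and your leading-symbol argument for algebraic independence of the lifted generators replaces the paper's dimension count via generic stabilisers. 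Note also that the dominance order is only a partial order, so your filtration must first be linearised, e.g.\ by the height function $\lb\mapsto(\lb,\varrho)$ as in the paper, before leading symbols and the Rees algebra make sense.

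The one step that does not work as written is the transfer of equidimensionality. From surjectivity (which is itself not automatic for quotients by unipotent groups) and upper semicontinuity you can only conclude that special fibres have dimension \emph{at least} $\dim X-2m$; that inequality is automatic for any dominant morphism and points in the wrong direction. What you need is the upper bound, and for that the conical structure is indispensable: the locus $E\subset\mathcal X$ where the local fibre dimension of $\pi_{\mathcal X,U'}$ exceeds $\dim X-2m$ is closed and $\bbk^\times$-stable, hence, if nonempty, contains the vertex $\bar 0$; the fibre through $\bar 0$ is $\fN_{U'}(\mathcal X)=\fN_{U'}(\gc(\mathfrak S))$ (because $q$ is itself a $U'$-invariant vanishing at $\bar 0$), and this has dimension $\dim X-2m$ by Theorem~\ref{thm:eq-sparse2} --- a contradiction. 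This is exactly the mechanism of Lemma~\ref{klod} (Chevalley's openness result), which also delivers the surjectivity you assumed. The paper instead deduces flatness of $\bbk[Y]$ over the polynomial ring $\bbk[Y]^{U'}$ from Cohen--Macaulayness of $Y$ (rational singularities via Kraft's theorem) and localises at $q$; either repair closes the argument, but some such input beyond bare semicontinuity is required.
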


This exploits the theory of ``contractions of actions'' of $G$ \cite{po86} and
can be regarded as a continuation of our work in~\cite[Sect.\,5]{aura}, where the 
equidimensionality  problem was considered for quotient morphism by $U$. 
For instance, under the hypotheses
of Theorem~\ref{thm-intro:deformation}, the morphism $\pi_{X,U}$ is also equidimensional.

In \cite{odno-sv}, we obtained a classification of the irreducible representations of simple 
algebraic
groups such that  $\bbk[V]^{U'}$ is a polynomial algebra. Now, using 
Theorem~\ref{thm-intro:deformation} and some {\sf ad hoc} arguments, 
we extract from that list the representations having the additional property that $\pi_{V,U'}$ is equidimensional.
The resulting list is precisely the list of representations such that 
$\bbk[V]$ is a free $\bbk[V]^{U'}$-module (such $G$-representations are said to be 
$U'$-{\it cofree}).

This work is organized as follows. Section~\ref{sect:recol} contains auxiliary results on 
$\eus S$-varieties \cite{vp72}, $U'$-invariants \cite{odno-sv}, and equidimensional morphisms. 
In Section~\ref{sect:HV}, we consider $U'$-actions on the \textsf{HV}-varieties.
Section~\ref{sect:S} is devoted to the $U'$-actions on arbitrary $\eus S$-varieties. Here we
prove results of items a) and b) above
(Theorems~\ref{thm:S-var}, \ref{thm:eq-sparse1}, and \ref{thm:eq-sparse2}). 
In Section~\ref{sect:Eq}, we prove the
general equidimensionality criterion for $\eus S$-varieties (item c)). The Hilbert--Mumford criterion for $U'$ and relations between two null-cones are discussed  
in Section~\ref{sect:HM}. In Section~\ref{sect:classif}, we prove Theorem~\ref{thm-intro:deformation} and obtain the classification of $U'$-cofree representations of $G$.
\vskip1.5ex

\un{\sl Notation}.
If an algebraic group $Q$ acts regularly on an irreducible affine variety $X$, then 
$X$ is called a $Q$-variety and

\textbullet \quad $Q_x=\{q\in Q\mid q{\cdot}x=x\}$ is the {\it stabiliser\/} of $x\in X$;

\textbullet \quad $\bbk[X]^Q$ is the algebra of $Q$-invariant polynomial functions on $X$.
If $\bbk[X]^Q$ is finitely generated, then $X\md Q:=\spe(\bbk[X]^Q)$, and
the {\it quotient morphism\/} $\pi_Q=\pi_{X,Q}: X\to X\md Q$ is the mapping associated with
the embedding $\bbk[X]^Q \hookrightarrow \bbk[X]$.

\noindent
Throughout, $G$ is a semisimple simply-connected algebraic group,  
$W=N_G(T)/T$ is the Weyl group, $B=TU$, and $r=\rk G$. Then 
\\ \indent
{\bf --}  $\Delta$ is the root system of $(G,T)$, 
$\Pi=\{\ap_1,\dots,\ap_r\}\subset \Delta$ are the simple roots corresponding to $U$, and 
$\varpi_1,\dots,\varpi_r$ are the corresponding fundamental weights. 
\\ \indent
{\bf --}  The character group of $T$ is denoted by $\mathfrak X$. All roots and weights are regarded as elements of 
the $r$-dimensional real vector space $\mathfrak X_\BR:=\mathfrak X\otimes \BR$.
\\ \indent
{\bf --}  $(\ ,\ )$ is a $W$-invariant symmetric non-degenerate bilinear form on $\mathfrak X_\BR$ and $s_i\in W$ is the reflection corresponding to $\ap_i$.
For any $\lb\in\mathfrak X_+$, let $\lb^*$ denote the highest weight of the dual $G$-module,
i.e., $\sfr(\lb)^*\simeq \sfr(\lb^*)$. The $\mu$-weight space of $\sfr(\lb)$ is denoted by $\sfr(\lb)_\mu$.
\\[.7ex]
We refer to \cite{vo} for standard results on root systems and representations of semisimple
algebraic groups.

\section{Recollections}     
\label{sect:recol}

\subsection{Horospherical varieties with a dense orbit}  \label{subs:horosph}
A $G$-variety $X$ is said to be {\it horospherical\/} if the stabiliser of any 
$x\in X$ contains a maximal unipotent subgroup of $G$. 
Following \cite{vp72},  affine 
horospherical varieties with a dense $G$-orbit are called $\eus S$-{\it varieties}.
Let $\mathfrak S$ be a finitely generated monoid in $\mathfrak X_+$ and 
$\{\lb_1,\dots,\lb_m\}$ the minimal set of generators of $\mathfrak S$. 
Let $v_{-\lb_i}\in \sfr(\lb_i^*)$ be a lowest weight
vector. Set $\bv=(v_{-\lb_1},\dots, v_{-\lb_m})$ and consider
\[
  \gc(\mathfrak S):=\ov{G{\cdot}\bv} \subset \sfr(\lb_1^*)\oplus\dots\oplus \sfr(\lb_m^*) .
\]
Clearly,  $\gc(\mathfrak S)$ is an $\eus S$-variety;  conversely, each 
$\eus S$-variety is obtained in this way   \cite{vp72}.
Write 
$\langle \mathfrak S\rangle$ for the linear span of $\mathfrak S$ in $\mathfrak X_\BR$
and set $\rk\mathfrak S=\dim_\BR \langle \mathfrak S\rangle$.
Let $L_\mathfrak S$ be the Levi subgroup such that $T\subset L_\mathfrak S$ and  the
roots of $L_\mathfrak S$ are those orthogonal to
$\lb_1,\dots,\lb_m$. Then
$P_\mathfrak S=L_\mathfrak S N_\mathfrak S$  is the standard parabolic subgroup, 
with unipotent radical $N_\mathfrak S\subset U$.  

\begin{thm}[\protect{\cite{vp72}}]  \label{thm:vp}
The affine variety $\gc(\mathfrak S)$  has the following properties:
\begin{itemize}
\item[\sf 1.] \ The algebra $\bbk[\gc(\mathfrak S)]$ is a multiplicity free $G$-module. More
precisely, 
$\bbk[\gc(\mathfrak S)]=\bigoplus_{\lb\in \mathfrak S}\sfr(\lb)$ and this decomposition is a multigrading, i.e.,  $\sfr(\lb)\sfr(\mu)=\sfr(\lb+\mu)$;
\item[\sf 2.] The $G$-orbits in $\gc(\mathfrak S)$ are in a one-to-one correspondence with 
the faces of the convex polyhedral cone in $\mathfrak X_\BR$ generated by $\mathfrak S$;
\item[\sf 3.] \ $\gc(\mathfrak S)$ is normal if and only if\/ $\BZ\mathfrak S\cap
\BQ^+\mathfrak S=\mathfrak S$;
\item[\sf 4.]  \ $\dim \gc(\mathfrak S)=\dim G/P_\mathfrak S + \rk\mathfrak S$.
\end{itemize}\end{thm}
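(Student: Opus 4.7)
My plan is to prove Part~1 first and derive Parts~2--4 from it. Set $H:=\mathrm{Stab}_G(\bv)$. Each $v_{-\lambda_i}$ is annihilated by the opposite unipotent radical $U^-$ and by the semisimple part $[L_\mathfrak S,L_\mathfrak S]$ (since the roots of $L_\mathfrak S$ are orthogonal to all $\lambda_i$), while $T$ acts on it by the character $-\lambda_i$. Assembling these yields $H=[L_\mathfrak S,L_\mathfrak S]\cdot T_\bv\cdot N_\mathfrak S^-$, where $T_\bv:=\bigcap_i\ker\lambda_i$ and $N_\mathfrak S^-$ is the opposite of $N_\mathfrak S$. In particular $H$ contains a maximal unipotent subgroup of $G$, so $\gc(\mathfrak S)$ is horospherical with dense orbit $G\cdot\bv\simeq G/H$. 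Frobenius reciprocity then gives
\[
   \bbk[G/H]=\bigoplus_{\lambda\in\mathfrak X_+}\sfr(\lambda)\otimes(\sfr(\lambda)^*)^H,
\]
and the explicit description of $H$ forces $(\sfr(\lambda)^*)^H$ to be the one-dimensional lowest weight line of $\sfr(\lambda)^*$ for $\lambda\in\mathfrak X_+\cap\BZ\mathfrak S$, and zero otherwise; hence $\bbk[G/H]$ is multiplicity-free as a $G$-module.

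To pin down the subring $\bbk[\gc(\mathfrak S)]\subset\bbk[G/H]$, I use that $\bbk[\gc(\mathfrak S)]$ is generated by the linear coordinates on the ambient space $\bigoplus_i\sfr(\lambda_i^*)$, which form exactly the $G$-submodules $\sfr(\lambda_i)$. Since the highest weight vector $f_i\in\sfr(\lambda_i)$ (dual to $v_{-\lambda_i}$) satisfies $f_i(\bv)=1$, every product $\prod f_i^{a_i}$ is a nonzero highest weight vector of weight $\sum a_i\lambda_i\in\mathfrak S$, and by multiplicity-freeness it generates the full isotypic component $\sfr(\sum a_i\lambda_i)$. Therefore $\bbk[\gc(\mathfrak S)]=\bigoplus_{\lambda\in\mathfrak S}\sfr(\lambda)$. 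The product rule $\sfr(\lambda)\cdot\sfr(\mu)=\sfr(\lambda+\mu)$ follows because the product of the respective highest weight vectors is a nonzero $T$-eigenvector of weight $\lambda+\mu$, which by multiplicity-freeness must generate the unique $\sfr(\lambda+\mu)$-component.

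Part~4 is then immediate: $\dim\gc(\mathfrak S)=\dim G-\dim H=\dim G/P_\mathfrak S+\rk\mathfrak S$, once one unpacks that $\dim H=\dim P_\mathfrak S-\rk\mathfrak S$ from the description above. For Part~3, I pass to $U$-invariants: the multigrading gives $\bbk[\gc(\mathfrak S)]^U=\bigoplus_{\lambda\in\mathfrak S}\bbk\cdot f_\lambda\simeq\bbk[\mathfrak S]$, the semigroup algebra of $\mathfrak S$. A standard fact for reductive $G$ asserts that a $G$-algebra is normal if and only if its $U$-invariant subalgebra is normal; applied here, this reduces Part~3 to Hochster's criterion that $\bbk[\mathfrak S]$ is normal precisely when $\mathfrak S=\BZ\mathfrak S\cap\BQ^+\mathfrak S$.

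Finally, for Part~2, the multigrading forces any $G$-stable radical ideal of $\bbk[\gc(\mathfrak S)]$ to be a direct sum of isotypic components $\sfr(\lambda)$; primality together with the product rule $\sfr(\lambda)\sfr(\mu)=\sfr(\lambda+\mu)$ forces the indexing set's complement in $\mathfrak S$ to be a sub-monoid of the form $\mathfrak S\cap F$ for some face $F\subset\BR^+\mathfrak S$. The corresponding closed $G$-stable subvariety has coordinate ring $\bigoplus_{\lambda\in\mathfrak S\cap F}\sfr(\lambda)=\bbk[\gc(\mathfrak S\cap F)]$ by Part~1 applied to the sub-monoid, and therefore carries a dense $G$-orbit. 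The most delicate step of the whole proof is precisely this combinatorial identification --- of sub-monoids of $\mathfrak S$ whose complement is multiplicatively closed with face-intersections $\mathfrak S\cap F$ --- which relies on the supporting-hyperplane characterization of faces of the polyhedral cone $\BR^+\mathfrak S$.
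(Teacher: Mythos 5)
First, a remark on the target: the paper does not prove Theorem~\ref{thm:vp} at all --- it is quoted from \cite{vp72} as background --- so there is no internal proof to compare against. Your write-up is in effect a reconstruction of the Vinberg--Popov argument, and in its overall architecture it is the standard and correct one: the computation $H:=G_{\bv}=[L_{\mathfrak S},L_{\mathfrak S}]\cdot T_{\bv}\cdot N_{\mathfrak S}^-$, Frobenius reciprocity for $\bbk[G/H]$, the dimension count via $\dim H=\dim P_{\mathfrak S}-\rk\mathfrak S$, the reduction of normality to the semigroup algebra $\bbk[\gc(\mathfrak S)]^U\simeq\bbk[\mathfrak S]$, and the matching of prime $G$-stable ideals with faces of the cone are all as they should be.

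There is, however, one genuine gap, and it sits at the point on which everything else depends: your justification of $\sfr(\lb)\cdot\sfr(\mu)=\sfr(\lb+\mu)$ establishes only the inclusion ``$\supseteq$''. The nonvanishing of the product of highest weight vectors shows that the Cartan component occurs in $\sfr(\lb)\cdot\sfr(\mu)$, but it does not exclude constituents $\sfr(\nu)$ with $\nu\prec\lb+\mu$ and $\nu\in\BZ\mathfrak S\cap\mathfrak X_+$; multiplicity-freeness of $\bbk[G/H]$ is no obstruction, since those $\sfr(\nu)$ do occur in $\bbk[G/H]$. As written, your argument only yields $\bigoplus_{\lb\in\mathfrak S}\sfr(\lb)\subseteq\bbk[\gc(\mathfrak S)]\subseteq\bigoplus_{\lb\in\BZ\mathfrak S\cap\mathfrak X_+}\sfr(\lb)$, and if the first inclusion were strict, Part~3 (which needs $\bbk[\gc(\mathfrak S)]^U\simeq\bbk[\mathfrak S]$ exactly) and the ideal--face dictionary in Part~2 would both break. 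The standard repair: a vector $w$ in a constituent $\sfr(\nu)\subseteq S^{a_1}\sfr(\lb_1)\otimes\cdots\otimes S^{a_m}\sfr(\lb_m)$, viewed as a function and evaluated at $g\cdot\bv$, equals the pairing of $g^{-1}w$ with $v_{-\lb_1}^{a_1}\otimes\cdots\otimes v_{-\lb_m}^{a_m}$; the latter is a lowest weight vector generating the Cartan component $\sfr\bigl((\textstyle\sum_i a_i\lb_i)^*\bigr)$ of the dual space, and since non-isomorphic irreducibles in dual $G$-modules pair to zero, only the constituent with $\nu=\sum_i a_i\lb_i$ survives restriction to $\gc(\mathfrak S)$. (Equivalently, the right action of $T$ on $G/H$ refines the decomposition of $\bbk[G/H]$ into a $\BZ\mathfrak S$-grading, and $\bbk[\gc(\mathfrak S)]$ is the subalgebra generated in degrees $\lb_1,\dots,\lb_m$.) With this step inserted, the rest of your argument goes through.
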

\noindent
If $\mathfrak S=\BN\lb$, then we write $\gc(\lb), P_\lb,\dots$ in place of 
$\gc(\BN\lb), P_{\BN\lb}$,\dots.
The variety $\gc(\lb)$ is the closure of the
$G$-orbit of highest weight vectors in $\sfr(\lb^*)$. Such varieties are called $\mathsf{HV}$-{\it varieties}; they are always normal. Recall that a $G$-variety $X$ is {\it spherical}, if $B$
has a dense orbit in $X$. Since $B{\cdot}\bv$ is  dense in $\gc(\mathfrak S)$, all $\eus S$-varieties are spherical.
By \cite[Theorem\,10]{po86}),  a normal spherical variety has 
rational singularities and therefore is Cohen-Macaulay. In particular, if $\mathfrak S$ is 
a free monoid, then $\gc(\mathfrak S)$ has rational singularities.

\subsection{Generalities on $U'$-invariants}    
\label{subs:gen}
We recall some results of \cite{odno-sv} and thereby fix relevant notation.
We regard $\mathfrak X$ as a poset with respect to the
{\it root order\/} ``$\curle$''. This means that $\nu\curle \mu$ if $\mu-\nu$ is a 
non-negative integral linear combination of simple roots. 
For any $\lb\in\mathfrak X_+$, we fix a simple $G$-module $\sfr(\lb)$ and write
$\eus P(\lb)$ for the set of $T$-weights of $\sfr(\lb)$. Then $(\eus P(\lb),\curle)$ is a 
finite poset and $\lb$ is its  unique maximal element.
Let $e_i\in\ut =\Lie U$ be a root vector corresponding to $\ap_i\in\Pi$. Then $(e_1,\dots,e_r)$ is a
basis for $\Lie(U/U')$.

The subspace of $U'$-invariants in $\sfr(\lb)$ has a nice description.
Since $\sfr(\lb)^{U'}$ is acted upon by $B/U'$, it is $T$-stable. Hence
$\sfr(\lb)^{U'}=\bigoplus_{\mu\in \eus I_\lb}\sfr(\lb)^{U'}_\mu$, where 
$\eus I_\lb$ is a subset of $\eus P(\lb)$.

\begin{thm}[\protect {\cite[Theorem\,1.6]{odno-sv}}] \label{thm1.6}
Suppose that $\lb=\sum_{i=1}^r a_i\varpi_i\in \mathfrak X_+$. Then 

(1)\quad $\eus I_\lb=\{ \lb-\sum_{i=1}^r b_i\ap_i \mid 0\le b_i\le a_i \ \ \forall i\}$;

(2)\quad $\dim \sfr(\lb)^{U'}_\mu=1$ for all $\mu\in \eus I_\lb$, i.e., $ \sfr(\lb)^{U'}$ is a 
multiplicity free $T$-module;

(3)\quad A nonzero $U'$-invariant of weight $\lb-\sum_{i=1}^r a_i\ap_i$, say $\bof$, 
is a cyclic vector of the $U/U'$-module $\sfr(\lb)^{U'}$. That is, the vectors 
$\{ (\prod_{i=1}^r e_i^{b_i})(\bof) \mid 0\le b_i\le a_i  \ \ \forall i\}$ form a basis for 
$\sfr(\lb)^{U'}$.
\end{thm}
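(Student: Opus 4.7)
The strategy exploits two facts. First, $\ut'=[\ut,\ut]$ is a Lie ideal in $\ut$, so $\ut/\ut'$ is abelian with basis the images of the simple root vectors $e_1,\dots,e_r$; equivalently $U/U'\cong\mathbb{G}_a^r$, and $\sfr(\lb)^{U'}$ carries a natural structure of a $T$-graded module over $U(\ut/\ut')\cong\bbk[e_1,\dots,e_r]$, where multiplication by $e_i$ shifts the $T$-weight by $+\ap_i$. Second, the non-degenerate $G$-pairing between $\sfr(\lb)$ and $\sfr(\lb^*)$ identifies $\sfr(\lb)^{U'}$ with the dual $T$-module of the coinvariants $\sfr(\lb^*)/\ut'\sfr(\lb^*)$. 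The plan is to bound $\sfr(\lb)^{U'}$ from below by an explicit construction and from above via this dual description.

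\textbf{Step 1 (construction of $\bof$ and the lower bound).} First I would verify that $f_iv_{\varpi_i}\in\sfr(\varpi_i)$ is $U'$-invariant: for any $\beta\in\Delta^+\setminus\Pi$, the commutator $[e_\beta,f_i]$ is either zero or proportional to a positive root vector $e_{\beta-\ap_i}$ which annihilates the highest weight vector $v_{\varpi_i}$, so $e_\beta(f_iv_{\varpi_i})=0$. Taking $a_i$ copies of $f_iv_{\varpi_i}$ and assembling the tensor product
\[
\tilde\bof:=\bigotimes_{i=1}^r(f_iv_{\varpi_i})^{\otimes a_i}\in V:=\bigotimes_{i=1}^r\sfr(\varpi_i)^{\otimes a_i}
\]
yields a $U'$-invariant of weight $\lb^\flat:=\lb-\sum a_i\ap_i$. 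Let $\mu:V\twoheadrightarrow\sfr(\lb)$ be the Cartan multiplication sending $\bigotimes_iv_{\varpi_i}^{\otimes a_i}$ to $v_\lb$, and set $\bof:=\mu(\tilde\bof)\in\sfr(\lb)^{U'}$. A Leibniz computation using $e_i(f_iv_{\varpi_i})=h_iv_{\varpi_i}=v_{\varpi_i}$ gives $\prod_ie_i^{a_i}\tilde\bof=\bigl(\prod_ia_i!\bigr)\bigotimes_iv_{\varpi_i}^{\otimes a_i}$, so $\prod_ie_i^{a_i}\bof$ is a nonzero multiple of $v_\lb$. Hence $\prod_ie_i^{b_i}\bof\neq0$ for every $0\le b_i\le a_i$; as these $\prod_i(a_i+1)$ vectors have pairwise distinct $T$-weights $\lb-\sum_i(a_i-b_i)\ap_i\in\eus I_\lb$, they are linearly independent, establishing the lower bound $\dim\sfr(\lb)^{U'}\ge\prod_i(a_i+1)$.

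\textbf{Step 2 (upper bound via coinvariants).} Next I would analyse $\sfr(\lb^*)/\ut'\sfr(\lb^*)$. For cyclicity, choose a PBW basis of $U(\ut)$ ordered so that the simple root vectors $e_1,\dots,e_r$ precede the non-simple ones; this gives $U(\ut)=\bbk[e_1,\dots,e_r]\cdot U(\ut')$ as a vector space. Since $\ut'$ is an ideal in $\ut$, one has $U(\ut)\,\ut'=\ut'\,U(\ut)$; combined with $U(\ut')v_{-\lb}\equiv\bbk v_{-\lb}\pmod{\ut'\sfr(\lb^*)}$, this yields
\[
\sfr(\lb^*)=U(\ut)v_{-\lb}=\bbk[e_1,\dots,e_r]\cdot U(\ut')v_{-\lb}\equiv\bbk[e_1,\dots,e_r]\,v_{-\lb}\pmod{\ut'\sfr(\lb^*)},
\]
so $\sfr(\lb^*)/\ut'\sfr(\lb^*)$ is cyclic over $\bbk[e_1,\dots,e_r]$, generated by the class of $v_{-\lb}$. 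For the annihilator, the $\mathfrak{sl}_2$-triple $\langle e_i,f_i,h_i\rangle$ acts on $v_{-\lb}$ with $f_iv_{-\lb}=0$ and $h_iv_{-\lb}=-a_iv_{-\lb}$, so $\langle e_i,f_i,h_i\rangle v_{-\lb}$ is the $(a_i+1)$-dimensional irreducible $\mathfrak{sl}_2$-module and $e_i^{a_i+1}v_{-\lb}=0$ in $\sfr(\lb^*)$. Consequently $\dim\bigl(\sfr(\lb^*)/\ut'\sfr(\lb^*)\bigr)\le\prod_i(a_i+1)$, and the duality isomorphism $\sfr(\lb)^{U'}\cong\bigl(\sfr(\lb^*)/\ut'\sfr(\lb^*)\bigr)^\ast$ transports this into the matching upper bound.

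\textbf{Conclusion and main obstacle.} Equality of the two bounds forces $\sfr(\lb)^{U'}=\bbk[e_1,\dots,e_r]\cdot\bof$ with basis $\{\prod_ie_i^{b_i}\bof:0\le b_i\le a_i\}$, from which assertions (1), (2), and (3) follow at once. The main technical obstacle is Step 1: one must exhibit a $U'$-invariant of weight $\lb^\flat$ not merely in the ambient tensor product $V$ but in $\sfr(\lb)$ itself, which requires certifying that the Cartan projection $\mu$ does not kill $\tilde\bof$. The explicit Leibniz calculation yielding $\prod_ie_i^{a_i}\bof=\bigl(\prod_ia_i!\bigr)v_\lb\neq0$ is exactly the input that rules this out and simultaneously delivers the lower bound $\prod_i(a_i+1)$.
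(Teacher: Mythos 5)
The paper does not actually prove this statement — it is imported verbatim from \cite[Theorem\,1.6]{odno-sv} — so there is no in-house argument to compare against; I can only assess your proof on its own terms, and it is correct and self-contained. The two-sided squeeze works: the lower bound $\dim\sfr(\lb)^{U'}\ge\prod_i(a_i+1)$ comes from the explicit invariant $\bof=\mu\bigl(\bigotimes_i(f_iv_{\varpi_i})^{\otimes a_i}\bigr)$ in the Cartan component, and your Leibniz computation $\prod_ie_i^{a_i}\bof=\bigl(\prod_ia_i!\bigr)v_\lb\ne0$ correctly certifies both that $\mu$ does not kill $\tilde\bof$ and that every $\prod_ie_i^{b_i}\bof$ with $b_i\le a_i$ is nonzero; the upper bound comes from the duality $\sfr(\lb)^{U'}\cong\bigl(\sfr(\lb^*)/\ut'\sfr(\lb^*)\bigr)^*$, PBW, and $e_i^{a_i+1}v_{-\lb}=0$, and the weights of your independent vectors are exactly $\eus I_\lb$, so (1), (2), (3) all drop out of the equality of the bounds (any nonzero invariant of weight $\lb-\sum_ia_i\ap_i$ is then proportional to $\bof$ by (2), giving the full strength of (3)). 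Two points deserve an explicit sentence in a final write-up. First, ``$\bbk[e_1,\dots,e_r]$'' is not a subalgebra of $U(\ut)$ since the $e_i$ do not commute there; what you are really invoking is the PBW spanning set of ordered monomials $e_1^{c_1}\cdots e_r^{c_r}\,U(\ut')$. Second, in both bounds you silently reorder the $e_i$: this is legitimate because $\ut'$ is an ideal of $\ut$, so both $\sfr(\lb)^{U'}$ and $\sfr(\lb^*)/\ut'\sfr(\lb^*)$ are honest modules over the abelian algebra $\ut/\ut'$, on which the $e_i$ commute; this is exactly what lets you deduce $\prod_ie_i^{b_i}\bof\ne0$ from the case $b_i=a_i$, and what lets you kill a monomial $e_1^{c_1}\cdots e_r^{c_r}v_{-\lb}$ with some $c_i>a_i$ even when $e_i^{c_i}$ is not the factor adjacent to $v_{-\lb}$. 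With those two remarks made explicit the argument is complete.
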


\noindent
It follows from  (1) and (2) that $\dim\sfr(\lb)^{U'}=
\prod_{i=1}^r(a_i+1)$.  In particular,  
$\dim\sfr(\varpi_i)^{U'}=2$. The weight spaces $\sfr(\varpi_i)_{\varpi_i}$ and
$\sfr(\varpi_i)_{\varpi_i-\ap_i}$ are one-dimensional, and we fix corresponding nonzero
weight vectors $f_i,\tilde f_i$ such that $e_i(\tilde f_i)=f_i$. That is, $\tilde f_i$
is a cyclic vector of $\sfr(\varpi_i)^{U'}$.

The biggest $\eus S$-variety corresponds to the monoid $\mathfrak S=\mathfrak X_+$.
Here 
\[
   \bbk[G/U]=\bbk[\gc(\mathfrak X_+)]=\bigoplus_{\lb\in \mathfrak X_+}\sfr(\lb) ,
\]
and the multiplicative structure of $\bbk[\gc(\mathfrak X_+)]$ together with
Theorem~\ref{thm1.6} imply 

\begin{thm}[cf. \protect {\cite[Theorem\,1.8]{odno-sv}}]    \label{thm1.8}
The  algebra of $U'$-invariants $\bbk[\gc(\mathfrak X_+)]^{U'}$ is  freely generated by
$f_1,\tilde f_1,\dots,f_r,\tilde f_r$. Therefore, any basis for the $2r$-dimensional
vector space $\bigoplus_{i=1}^r \mathsf R(\varpi_i)^{U'}$ yields a free generating
system for\/  $\bbk[\gc(\mathfrak X_+)]^{U'}$.
\end{thm}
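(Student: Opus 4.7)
The strategy is to define the algebra homomorphism
\[
\Phi\colon \bbk[x_1,y_1,\dots,x_r,y_r]\longrightarrow \bbk[\gc(\mathfrak X_+)]^{U'},\qquad x_i\mapsto f_i,\ y_i\mapsto \tilde f_i,
\]
and verify that it is an isomorphism of $T$-graded algebras. Well-definedness is immediate, since $f_i,\tilde f_i\in\sfr(\varpi_i)^{U'}$. The multigraded ring structure given by Theorem~\ref{thm:vp} shows that each monomial $\prod_i f_i^{c_i}\tilde f_i^{d_i}$ lies in the summand $\sfr(\lb)$ for $\lb=\sum_i(c_i+d_i)\varpi_i$, and its $T$-weight equals $\lb-\sum_i d_i\ap_i$.

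Next I would fix $\lb=\sum_i a_i\varpi_i$ and analyze the family of monomials $\prod_i f_i^{c_i}\tilde f_i^{d_i}$ subject to $c_i+d_i=a_i$, parametrized by $(d_1,\dots,d_r)\in\prod_i\{0,1,\dots,a_i\}$. Each such monomial is nonzero because $\gc(\mathfrak X_+)$ is irreducible, so its coordinate ring is a domain. Their $T$-weights $\lb-\sum_i d_i\ap_i$ are pairwise distinct by linear independence of the simple roots, and by Theorem~\ref{thm1.6}(2) every weight space of $\sfr(\lb)^{U'}$ is at most one-dimensional. Hence these $\prod_i(a_i+1)$ monomials form a linearly independent family inside $\sfr(\lb)^{U'}$, and since $\dim\sfr(\lb)^{U'}=\prod_i(a_i+1)$, they form a basis. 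Summing over $\lb\in\mathfrak X_+$, the map $\Phi$ sends a basis of $\bbk[x_1,y_1,\dots,x_r,y_r]$ bijectively onto a basis of $\bbk[\gc(\mathfrak X_+)]^{U'}$, so it is an isomorphism.

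The final assertion of the theorem follows at once: any basis of the $2r$-dimensional space $\bigoplus_{i=1}^r\sfr(\varpi_i)^{U'}$ is obtained from $\{f_1,\tilde f_1,\dots,f_r,\tilde f_r\}$ by an invertible linear substitution, and such substitutions carry one free generating system of a polynomial algebra to another. I expect no substantial obstacle here, as all the technical inputs (the multiplicative $G$-module structure, the dimension count for $\sfr(\lb)^{U'}$, the one-dimensionality of weight spaces, and the domain property of $\bbk[\gc(\mathfrak X_+)]$) are already supplied by Theorems~\ref{thm:vp} and~\ref{thm1.6}. The only point needing care is the bookkeeping that matches the set of monomials in the source polynomial algebra with the weight basis of $\sfr(\lb)^{U'}$ for each dominant weight $\lb$.
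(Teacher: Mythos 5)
Your proof is correct and is essentially the argument the paper relies on (recalled from [Theorem 1.8] of the cited earlier work and sketched again in the proof of Theorem \ref{thm:some-fund}): for each $\lb=\sum_i a_i\varpi_i$ one checks via the multigrading of Theorem \ref{thm:vp}(1), the domain property, distinctness of the $T$-weights $\lb-\sum_i d_i\ap_i$, and the dimension count from Theorem \ref{thm1.6} that the monomials $\prod_i f_i^{c_i}\tilde f_i^{d_i}$ with $c_i+d_i=a_i$ form a basis of $\sfr(\lb)^{U'}$, whence the freeness, and the last assertion follows by an invertible linear change of generators.
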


\noindent
The algebra $\bbk[G/U]$ is sometimes
called the {\it flag algebra\/} for $G$, because it can be realized as the multi-homogeneous
coordinate ring of the flag variety $G/B$.
More generally, we have

\begin{thm}    \label{thm:some-fund}
If\/ $\mathfrak S$ is generated by some fundamental weights, say $\{\varpi_i\mid i\in M\}$, 
then any basis for $\bigoplus_{i\in M} \mathsf R(\varpi_i)^{U'}$ yields a free generating
system for\/  $\bbk[\gc(\mathfrak S)]^{U'}$.
\end{thm}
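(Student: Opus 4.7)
The plan is to apply Theorem~\ref{thm:vp}(1) and Theorem~\ref{thm1.6} in tandem: the first gives the multigraded $G$-module decomposition $\bbk[\gc(\mathfrak S)]=\bigoplus_{\lb\in\mathfrak S}\sfr(\lb)$ with the multiplicative rule $\sfr(\lb)\sfr(\mu)=\sfr(\lb+\mu)$, while the second describes $\sfr(\lb)^{U'}$ weight by weight. Because the fundamental weights $\{\varpi_i:i\in M\}$ are linearly independent, $\mathfrak S$ is freely generated on them as a commutative monoid, so every $\lb\in\mathfrak S$ has a unique expression $\lb=\sum_{i\in M}a_i\varpi_i$ with $a_i\in\BN$. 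Taking $U'$-invariants summand-by-summand yields $\bbk[\gc(\mathfrak S)]^{U'}=\bigoplus_{\lb\in\mathfrak S}\sfr(\lb)^{U'}$, and Theorem~\ref{thm1.6}(1)--(2) gives $\dim\sfr(\lb)^{U'}=\prod_{i\in M}(a_i+1)$.

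I would then fix the distinguished basis $\{f_i,\tilde f_i:i\in M\}$ of $\bigoplus_{i\in M}\sfr(\varpi_i)^{U'}$ introduced just after Theorem~\ref{thm1.6}. Any other basis is obtained from this one by an invertible linear change of the $2|M|$ variables, and such a change does not affect whether they freely generate a polynomial algebra; it thus suffices to prove that the $f_i,\tilde f_i$ are free generators. For fixed $\lb=\sum a_i\varpi_i\in\mathfrak S$, introduce the $\prod_{i\in M}(a_i+1)$ monomials $m_{\mathbf c}=\prod_{i\in M}f_i^{a_i-c_i}\tilde f_i^{c_i}$, indexed by $\mathbf c=(c_i)_{i\in M}\in\prod_{i\in M}\{0,1,\dots,a_i\}$. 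Each $m_{\mathbf c}$ is $U'$-invariant as a product of $U'$-invariants, lives in the Cartan component $\sfr(\lb)\subset\bbk[\gc(\mathfrak S)]$ by the multiplicativity of the $\mathfrak S$-grading (since $f_i,\tilde f_i\in\sfr(\varpi_i)$), and has $T$-weight $\lb-\sum_{i\in M}c_i\ap_i$.

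The final step is to verify that $m_{\mathbf c}\neq 0$ and that the $m_{\mathbf c}$ are linearly independent in $\sfr(\lb)^{U'}$. Nonvanishing is immediate because $\bbk[\gc(\mathfrak S)]$ is an integral domain (the variety $\gc(\mathfrak S)$ is irreducible). Linear independence follows because distinct tuples $\mathbf c$ yield distinct $T$-weights $\lb-\sum c_i\ap_i$, using that $\{\ap_i:i\in M\}$ is linearly independent in $\mathfrak X_\BR$ and the weight spaces $\sfr(\lb)^{U'}_\mu$ are one-dimensional by Theorem~\ref{thm1.6}(2). The dimension equality $|\{m_{\mathbf c}\}|=\dim\sfr(\lb)^{U'}$ then forces $\{m_{\mathbf c}\}$ to be a basis of $\sfr(\lb)^{U'}$. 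Summing over $\lb\in\mathfrak S$, the collection of all monomials in $\{f_i,\tilde f_i\}_{i\in M}$ is a basis of $\bbk[\gc(\mathfrak S)]^{U'}$, hence these $2|M|$ elements are algebraically independent free generators.

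I do not anticipate a genuine obstacle, since the argument is a direct generalisation of the proof of Theorem~\ref{thm1.8} (the case $\mathfrak S=\mathfrak X_+$), replacing the full set of fundamental weights by the subset indexed by $M$. The only point that merits care is the verification that each $m_{\mathbf c}$ lands in the correct multigraded component $\sfr(\lb)$, which reduces to the multiplicativity rule $\sfr(\mu)\sfr(\nu)=\sfr(\mu+\nu)$ from Theorem~\ref{thm:vp}(1); and the need to be sure that the hypothesis forces the $\varpi_i$ to be $\BZ$-linearly independent, so that the $\mathfrak S$-grading really detects the partition of $\lb$ into its fundamental-weight coordinates.
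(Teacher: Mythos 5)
Your argument is correct and is essentially the paper's proof: for each $\lb=\sum_{i\in M}a_i\varpi_i$ one checks, via the multiplicativity $\sfr(\mu)\sfr(\nu)=\sfr(\mu+\nu)$ and the weight description of Theorem~\ref{thm1.6}, that the monomials in $f_i,\tilde f_i$ form a basis of $\sfr(\lb)^{U'}$, exactly as in the proof of Theorem~\ref{thm1.8}. Your spelled-out details (distinct $T$-weights, one-dimensional weight spaces, nonvanishing in the domain $\bbk[\gc(\mathfrak S)]$, and the count $\prod_{i\in M}(a_i+1)$) are precisely what the paper's terse reference to that earlier proof relies on.
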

\begin{proof}
As in the proof of \cite[Theorem\,1.8]{odno-sv}, one observes that, for
$\lb=\sum_{i\in M}a_i\varpi_i$, the monomials 
$\{\prod_{i\in M}f_i^{b_i}\tilde f_i^{a_i-b_i} \mid 0\le b_i\le a_i \}$
form a basis for the space $\sfr(\lb)^{U'}$.
[Another way is to consider the natural embedding $\gc(\mathfrak S)\hookrightarrow
\gc(\mathfrak X_+)$ \cite{vp72}
and the surjective homomorphism $\bbk[\gc(\mathfrak X_+)]^{U'}
\to \bbk[\gc(\mathfrak S)]^{U'}$.]
\end{proof}

Given $\lb\in\mathfrak X_+$, we always consider  a basis for $\sfr(\lb)^{U'}$ generated by a cyclic vector and elements $e_i\in \g_{\ap_i}$, i.e., a basis
$\{ f_\mu \in \sfr(\lb)_\mu \mid  \mu\in \eus I_\lb\}$ such that
\[
    e_i(f_\mu)=\begin{cases}  f_{\mu+\ap_i}, &  \mu+\ap_i\in \eus I_\lb , \\
    0, &  \mu+\ap_i\not\in \eus I_\lb .  \end{cases}
\]  
However, for the fundamental $G$-modules $\sfr(\varpi_i)$, we write 
$f_i$ in place of $f_{\varpi_i}$ and $\tilde f_i$ in place of $f_{\varpi_i-\ap_i}$.

\subsection{Equidimensional morphisms and conical varieties}    \label{subs:equi}
Let $\pi:X\to Y$ be a dominant morphism of irreducible algebraic varieties. We say that
$\pi$ {\it is equidimensional at\/} $y\in Y$ if all irreducible components of $\pi^{-1}(y)$ are of
dimension $\dim X-\dim Y$. Then $\pi$ is said to be {\it equidimensional\/} if it is 
equidimensional at  any $y\in \pi(X)$. By a result of Chevalley \cite[Ch.\,5, n.5, Prop.\,3]{klod},
if $y=\pi(x)$ is a normal point, $\pi$ is equidimensional at $y$, and 
$\Omega\subset X$ is a neighbourhood of $x$, then
$\pi(\Omega)$ is a neighbourhood of $y$. Consequently, an equidimensional morphism to
a normal variety is open.

An affine variety $X$ is said to be {\it conical\/} if $\bbk[X]$ is $\BN$-graded, 
$\bbk[X]=\bigoplus_{n\ge 0} \bbk[X]_n$, and $\bbk[X]_0=\bbk$. Then the point $x_0$ 
corresponding to the maximal ideal $\bigoplus_{n\ge 1} \bbk[X]_n$ is called the
{\it vertex\/}. Geometrically, this means that $X$ is equipped with an action of the 
multiplicative group $\bbk^\times$ such that $\{x_0\}$ is the only closed $\bbk^\times$-orbit
in $X$.

\begin{lm}   \label{klod}
Suppose that both $X$ and $Y$ are  conical, and
$\pi:X\to Y$ is dominant and $\bbk^\times$-equivariant. 
(Then $\pi(x_0)=:y_0$ is the vertex in $Y$.)
If\/ $Y$ is normal and $\pi$ is equidimensional at 
$y_0$, then $\pi$ is onto and  equidimensional.
\end{lm}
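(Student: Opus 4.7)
The plan is to deduce both conclusions by combining the result of Chevalley recalled just before the lemma with the fact that the $\bbk^\times$-action contracts every point of $Y$ to the vertex $y_0$.

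First I would settle surjectivity. Applying the Chevalley statement to the normal point $y_0$---at which $\pi$ is equidimensional by hypothesis---one concludes that for any neighbourhood $\Omega$ of $x_0$ the image $\pi(\Omega)$ contains an open set $V\ni y_0$. Because $\pi$ is $\bbk^\times$-equivariant and $X$ is $\bbk^\times$-stable, the set $\pi(X)$ is itself $\bbk^\times$-stable, hence contains $\bbk^\times{\cdot}V$. Since $Y$ is conical, $\lim_{t\to 0}t{\cdot}y=y_0$ for every $y\in Y$, so the $\bbk^\times$-saturation of any neighbourhood of $y_0$ equals $Y$. This gives $\pi(X)=Y$.

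Next I would turn to equidimensionality at an arbitrary $y\in Y$. Put $d=\dim X-\dim Y$. The $\bbk^\times$-equivariance of $\pi$ yields $\pi^{-1}(t{\cdot}y)=t{\cdot}\pi^{-1}(y)$, so the fibre dimension is constant on $\bbk^\times$-orbits. Because $y_0\in\ov{\bbk^\times{\cdot}y}$, upper semicontinuity of fibre dimension for the dominant morphism $\pi$ forces $\dim\pi^{-1}(y)\le \dim\pi^{-1}(y_0)$, and the assumption of equidimensionality at $y_0$ makes the right-hand side equal to $d$. On the other hand, the standard lower bound for dominant morphisms says that every irreducible component of $\pi^{-1}(y)$ has dimension at least $d$. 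Combining the two pins each such component to dimension exactly $d$, which is the required equidimensionality.

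I do not foresee any serious obstacle: the only nontrivial input beyond the cited Chevalley statement is the elementary observation that in a conical variety every point has the vertex in the closure of its $\bbk^\times$-orbit, which promotes local information at $y_0$ to global information on $Y$. The remainder is routine dimension theory of morphisms.
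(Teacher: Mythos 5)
Your argument is correct and is exactly the route the paper intends, since the paper offers no proof beyond the remark that the lemma "readily follows" from Chevalley's openness statement and standard fibre-dimension inequalities: you use Chevalley at $y_0$ plus the contraction $\lim_{t\to 0}t{\cdot}y=y_0$ for surjectivity, and the two-sided dimension bounds for equidimensionality. The one point to phrase more carefully is the upper bound: fibre dimension is not in general upper semicontinuous as a function on the target of a non-proper morphism, so one should instead invoke the closedness in $X$ of the jump locus $\{x\mid \dim_x\pi^{-1}(\pi(x))\ge e\}$; this set is $\bbk^\times$-stable, and every nonempty closed $\bbk^\times$-stable subset of the conical variety $X$ contains $x_0$, which yields $\dim\pi^{-1}(y)\le\dim_{x_0}\pi^{-1}(y_0)$ as you claim.
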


\noindent
This readily follows from the above-mentioned result of Chevalley and
standard inequalities for the dimension of fibres.
\begin{rmk}   As $\mathfrak S$ lies in an open half-space of $\mathfrak X_\BR$,
taking a suitable $\BN$-specialisation of the multi-grading of $\bbk[\gc(\mathfrak S)]$
shows that $\gc(\mathfrak S)$ is conical and the origin in 
$\sfr(\lb_1^*)\oplus\dots\oplus \sfr(\lb_m^*)$ is its vertex.
This implies that $\gc(\mathfrak S)\md U'$ is conical, too.
We will apply the above lemma to the study of equidimensional quotient maps
$\pi:\gc(\mathfrak S)\to  \gc(\mathfrak S)\md U'$. It is important that such $\pi$ appears to be 
onto. 

The idea of applying Chevalley's result to the study of equidimensional quotients (by $U$)
is due to Vinberg and Gindikin \cite{vg08}.
\end{rmk}

\section{Actions of $U'$ on HV-varieties}    \label{sect:HV}

\noindent
Let $\gc(\lb)=\ov{G{\cdot}v_{-\lb}}\subset \sfr(\lb^*)$ be an \textsf{HV}-variety. The 
algebra $\bbk[\gc(\lb)]$ is $\BN$-graded and its component of degree $n$ is 
$\sfr(n\lb)$.  Since $\gc(\lb)$ is normal, $\gc(\lb)\md U'$ is normal, too.

\begin{thm}   \label{thm:HV-free}
$\gc(\lb)\md U'$ is an affine space if and only if  $\lb$ is a fundamental weight.
\end{thm}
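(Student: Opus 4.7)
The plan is to prove the two implications separately. For the forward direction, if $\lb=\varpi_i$, then Theorem~\ref{thm:some-fund} applied with $M=\{i\}$ shows that $\bbk[\gc(\varpi_i)]^{U'}$ is freely generated by any basis of the two-dimensional space $\sfr(\varpi_i)^{U'}$ (for example $\{f_i,\tilde f_i\}$), so $\gc(\varpi_i)\md U'\cong\BA^2$.

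For the converse, write $\lb=\sum_i a_i\varpi_i\ne 0$ and set $k=\#\{i:a_i>0\}$. The $\BN$-grading $\bbk[\gc(\lb)]=\bigoplus_{n\ge 0}\sfr(n\lb)$ restricts to the subalgebra of $U'$-invariants, and Theorem~\ref{thm1.6}(1)--(2) yields
\[
H(n):=\dim_\bbk \bbk[\gc(\lb)]^{U'}_n=\dim\sfr(n\lb)^{U'}=\prod_{a_i>0}(na_i+1)=n^k\!\!\prod_{a_i>0}\!a_i\;+\;O(n^{k-1}).
\]
Suppose $\bbk[\gc(\lb)]^{U'}$ is a polynomial algebra. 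It is connected $\BN$-graded with degree-zero part $\bbk$, so the graded Nakayama lemma provides algebraically independent $\BN$-homogeneous generators $y_0,\dots,y_{k'}$ with $\deg y_j=d_j\ge 1$, where $k'+1$ is the Krull dimension. The standard expansion of $\prod_j(1-t^{d_j})^{-1}$ near $t=1$ gives the Hilbert-polynomial asymptotic $H(n)\sim n^{k'}/(k'!\,d_0d_1\cdots d_{k'})$. Matching leading terms with the formula above forces $k'=k$ and
\[
k!\,(d_0d_1\cdots d_k)\,\textstyle\prod_{a_i>0}a_i \;=\; 1.
\]
Since every factor on the left is a positive integer, each equals $1$. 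In particular $k!=1$ forces $k=1$ (using $\lb\ne 0$), the unique nonzero $a_i$ equals $1$, and hence $\lb=\varpi_i$.

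The proof has no genuinely hard step; the only point requiring any care is the existence of $\BN$-homogeneous algebraically independent generators for a connected graded polynomial algebra, a standard consequence of graded Nakayama applied to the quotient $\bbk[\gc(\lb)]^{U'}_+/(\bbk[\gc(\lb)]^{U'}_+)^2$. The real mechanism of the proof is Theorem~\ref{thm1.6}, whose exact formula $\dim\sfr(n\lb)^{U'}=\prod(na_i+1)$ singles out the fundamental-weight case---Hilbert polynomial $n+1$, matching that of $\bbk[y_0,y_1]$ with $\deg y_j=1$---as the only one whose Hilbert polynomial has the shape required of a graded polynomial ring.
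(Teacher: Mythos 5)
Your proof is correct, and your ``if'' direction coincides with the paper's (both just invoke Theorem~\ref{thm:some-fund} to get $\gc(\varpi_i)\md U'\simeq\BA^2$). For the ``only if'' direction you take a genuinely different route: the paper exhibits explicit quadratic relations --- $f_\lb f_{\lb-\ap_i-\ap_j}-f_{\lb-\ap_i}f_{\lb-\ap_j}=0$, resp.\ $2f_\lb f_{\lb-2\ap_i}-f_{\lb-\ap_i}^2=0$ --- among degree-one $U'$-invariants, which must occur in any minimal homogeneous generating system since $\bbk[\gc(\lb)]^{U'}_1=\sfr(\lb)^{U'}$; you instead compare Hilbert series, matching $\dim\sfr(n\lb)^{U'}=\prod_{a_i>0}(na_i+1)$ against $\prod_j(1-t^{d_j})^{-1}$. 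Your count is shorter, needs no case distinction between $i\ne j$ and $i=j$, and yields the Krull dimension $1+\#\{i\mid a_i\ne 0\}$ of Theorem~\ref{thm:dim-HV}(i) as a byproduct; the paper's explicit relations carry more structural information and are recycled later (the proof of Theorem~\ref{thm:eq-sparse1} and the homological-dimension corollary run on relations of exactly this shape). One small point of hygiene: the pointwise asymptotic $H(n)\sim n^{k'}/(k'!\,d_0\cdots d_{k'})$ for a graded polynomial ring holds only when $\gcd(d_0,\dots,d_{k'})=1$ (otherwise $H$ vanishes on entire residue classes and only an averaged asymptotic is true). Here this is harmless because $\sfr(\lb)^{U'}\ne 0$ sits in degree one, so some $d_j=1$; alternatively, you can sidestep the issue by matching the order and leading coefficient of the pole of the Hilbert series at $t=1$, which is what your ``expansion near $t=1$'' amounts to.
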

\begin{proof}
1) \ Suppose that $\lb$ is not fundamental, i.e., 
$\lb=\cdots +a\varpi_i +b\varpi_j +\cdots$ with $a,b \ge 1$.

\textbullet \ 
If $i\ne j$, then $\sfr(\lb)^{U'}$ contains linearly independent vectors
 $f_\lb, f_{\lb-\ap_i}, f_{\lb-\ap_j}, f_{\lb-\ap_i-\ap_j}$ that occur in any minimal generating system, since $\bbk[\gc(\lb)]_1\simeq \sfr(\lb)$.
Using the relations $e_i(f_{\lb-\ap_i-\ap_j})=f_{\lb-\ap_j}$, etc., one easily verifies that  
\[
   p=f_\lb f_{\lb-\ap_i-\ap_j}-f_{\lb-\ap_i} f_{\lb-\ap_j}  
\]
is a $U$-invariant function on $\gc(\lb)$, of degree $2$. 
The only highest weight in degree $2$
is $2\lb$. Since the weight of $p$ is not $2\lb$, we must have $p\equiv 0$, and this is a 
non-trivial relation.

\textbullet \ 
If $i=j$, then the coefficient of $\varpi_i$ is at least $2$ and we consider vectors $f_\lb$, $f_{\lb-\ap_i}$, $f_{\lb-2\ap_i}\in 
\sfr(\lb)^{U'}$.  Then $\tilde p=2f_\lb f_{\lb-2\ap_i}-f_{\lb-\ap_i}^2$ is a $U$-invariant
function of degree 2 and weight $2(\lb-\ap_i)$, and this yields the relation
$\tilde p=0$ in $\bbk[\gc(\lb)]^{U'}$.

2) \ If $\lb=\varpi_i$, then $\dim \mathsf R(\varpi_i)^{U'}=2$ and
$\gc(\varpi_i)\md U'\simeq \BA^2$ by Theorem~\ref{thm:some-fund}.
\end{proof}

For an affine variety $X$, let $\ed X$ denote the minimal number of generators of $\bbk[X]$ 
and $\hd(X)$ the homological dimension of $\bbk[X]$. If $\bbk[X]$ is a graded 
Cohen-Macaulay algebra, then $\hd(X)=\ed X-\dim X$ \cite[Ch.\,IV]{serre}.

\begin{thm}   \label{thm:dim-HV}
If\/ $\lb=\sum_{i=1}^r a_i\varpi_i\in \mathfrak X_+$, then
\begin{itemize}
\item[\sf (i)]  \  $\dim \gc(\lb)\md U'= 1+\#\{j\mid a_j\ne0\}$;
\item[\sf (ii)] \  the graded algebra $\bbk[\gc(\lb)]^{U'}$ is generated by functions of 
degree one, i.e., by the space $\sfr(\lb)^{U'}$,  
and $\ed \gc(\lb)\md U'=\prod_{i=1}^r (a_i+1)$.
\end{itemize}
\end{thm}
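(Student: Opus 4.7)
The main tool is the surjective $G$-equivariant algebra homomorphism $\varphi:\bbk[\gc(\mathfrak X_+)]\twoheadrightarrow\bbk[\gc(\lb)]$ induced by the natural embedding $\gc(\lb)\hookrightarrow\gc(\mathfrak X_+)$; it annihilates the isotypic component $\sfr(\mu)$ unless $\mu\in\BN\lb$, in which case it is the identity on $\sfr(n\lb)$.  Restricting to $U'$-invariants yields a surjection $\varphi^{U'}$ from the polynomial ring $\bbk[f_1,\tilde f_1,\dots,f_r,\tilde f_r]$ (Theorem~\ref{thm1.8}) onto $\bbk[\gc(\lb)]^{U'}$.  Since $\bbk[\gc(\mathfrak X_+)]$ is multiplicity free with Cartan product $\sfr(\mu)\sfr(\nu)=\sfr(\mu+\nu)$, the monomial $m=\prod_i f_i^{b_i}\tilde f_i^{c_i}$ lies in the isotypic component $\sfr\bigl(\sum_i(b_i+c_i)\varpi_i\bigr)$.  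Hence $\varphi^{U'}(m)\ne 0$ if and only if there is a common $n\ge 0$ with $b_i+c_i=na_i$ for all $i$; call such monomials \emph{surviving} (of level $n$).

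Next I would match surviving monomials with a weight basis of $\bbk[\gc(\lb)]^{U'}$.  A surviving monomial of level $n$ has $T$-weight $\mu=n\lb-\sum c_i\ap_i\in\eus I_{n\lb}$ (because $0\le c_i\le na_i$), and it is the unique monomial of this weight since $c_i$ determines $b_i=na_i-c_i$.  By Theorem~\ref{thm1.6}(2) every weight space of $\sfr(n\lb)^{U'}$ is one-dimensional, so the surjectivity of $\varphi^{U'}$ forces the surviving monomials to map bijectively onto a weight basis of $\bigoplus_n\sfr(n\lb)^{U'}=\bbk[\gc(\lb)]^{U'}$.  In particular, the degree-$n$ piece of $\bbk[\gc(\lb)]^{U'}$ has dimension $\prod_{a_i\ne 0}(na_i+1)$.

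To establish (ii), it suffices to write every surviving monomial of level $n$ as a product of $n$ surviving level-$1$ monomials (those with $b_i+c_i=a_i$).  This reduces to decomposing each $b_i\in[0,na_i]$ as a sum of $n$ integers in $[0,a_i]$, which is elementary.  Thus $\bbk[\gc(\lb)]^{U'}$ is generated in degree one, i.e., by the image of $\sfr(\lb)^{U'}$.  Since $\bbk[\gc(\lb)]^{U'}_0=\bbk$, any minimal set of generators must contain a basis of the degree-one component $\sfr(\lb)^{U'}$, whence $\ed\gc(\lb)\md U'=\dim\sfr(\lb)^{U'}=\prod_i(a_i+1)$ by Theorem~\ref{thm1.6}(1).

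For (i), the Hilbert series is $H(t)=\sum_{n\ge 0}\prod_{a_i\ne 0}(na_i+1)\,t^n$; its coefficient is a polynomial in $n$ of degree $d:=\#\{j:a_j\ne 0\}$ with nonzero leading term $\prod_{a_i\ne 0}a_i$.  Standard generating-function manipulation gives $H(t)=Q(t)/(1-t)^{d+1}$ with $Q(1)\ne 0$, and $\dim\gc(\lb)\md U'$ equals the pole order at $t=1$, namely $d+1$.  The only delicate point is the bookkeeping in the second step, where the one-dimensionality of $U'$-weight spaces (Theorem~\ref{thm1.6}(2)) is essential to conclude that the surviving monomials really form a basis of the quotient.
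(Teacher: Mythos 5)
Your part (i) is correct and actually takes a different, self-contained route from the paper: you read off the Krull dimension from the pole order of the Hilbert series, using $\dim \sfr(n\lb)^{U'}=\prod_i(na_i+1)$, whereas the paper computes $\dim\gc(\lb)\md U'$ as $\dim\gc(\lb)-\dim U'+\min_x\dim U'_x$ via the generic $U$-stabiliser $U\cap L_\lb$ and \cite[Lemma~2.5]{odno-sv}. Note that the dimension $\prod_i(na_i+1)$ of the degree-$n$ piece needs no ``surviving monomial'' bookkeeping: it is Theorem~\ref{thm1.6}(1),(2) applied directly to $n\lb$, so (i) does not depend on your first step.

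The genuine gap is that first step, on which your proof of (ii) rests. There is no closed $G$-embedding $\gc(\lb)\hookrightarrow\gc(\mathfrak X_+)$ for non-fundamental $\lb$, and the linear projection of $\bbk[\gc(\mathfrak X_+)]$ onto $\bigoplus_n\sfr(n\lb)$ killing $\sfr(\mu)$ for $\mu\notin\BN\lb$ is not an algebra homomorphism. By Theorem~\ref{thm:vp}(2), the irreducible closed $G$-stable subvarieties of $\gc(\mathfrak X_+)$ correspond to faces of the dominant chamber, i.e., they are the $\gc(M)$ with coordinate rings $\bigoplus_{\mu\in\sum_{i\in M}\BN\varpi_i}\sfr(\mu)$; for non-fundamental $\lb$ the algebra $\bigoplus_n\sfr(n\lb)$ is not of this form. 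Concretely, for $\lb=\varpi_1+\varpi_2$ take nonzero $a\in\sfr(2\varpi_1)$ and $b\in\sfr(2\varpi_2)$ in the flag algebra: then $ab\ne 0$ (the flag algebra is a domain) and $ab\in\sfr(2\lb)$, so your $\varphi$ would send $a$ and $b$ to $0$ but $ab$ to itself. The actual relation between the two algebras is an inclusion $\bbk[\gc(\lb)]=\bigoplus_n\sfr(n\lb)\hookrightarrow\bbk[\gc(\mathfrak X_+)]$ (a dominant morphism in the opposite direction), which gives you no way to push the monomials $\prod_i f_i^{b_i}\tilde f_i^{c_i}$ down to $\bbk[\gc(\lb)]^{U'}$. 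The repair is to argue inside $\bbk[\gc(\lb)]$ itself, which is what the paper does: each $\mu\in\eus I_{n\lb}$ is a sum of $n$ elements of $\eus I_{\lb}$ (your elementary decomposition of $b_i\in[0,na_i]$ into $n$ summands in $[0,a_i]$), the corresponding product of weight vectors from $\sfr(\lb)^{U'}$ is a nonzero $U'$-invariant of weight $\mu$ in $\sfr(n\lb)$ because $\bbk[\gc(\lb)]$ is a domain, and it spans $\sfr(n\lb)^{U'}_{\mu}$ by multiplicity-freeness. With that substitution, your concluding steps (graded Nakayama, giving $\ed\gc(\lb)\md U'=\prod_i(a_i+1)$) go through.
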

\begin{proof}
(i)  Recall that $P_\lb=L_\lb N_\lb$ is the standard parabolic subgroup associated with $\gc(\lb)$ and 
the simple roots of 
$L_\lb$ are those orthogonal to $\lb$. Set $k=\#\{j\mid a_j\ne0\}$. Then 
$\srk L_\lb:=\rk(L_\lb,L_\lb)=\rk G-k$  and  $\dim \gc(\lb)=\dim N_\lb+1$. 
Since $U{\cdot}(\bbk v_{-\lb})$ is dense in $\gc(\lb)$,  
$U(L_\lb):=U\cap L_\lb$ is a generic stabiliser for the $U$-action on $\gc(\lb)$. 
By \cite[Lemma\,2.5]{odno-sv}, the minimal dimension of stabilisers for the $U'$-action on
$\gc(\lb)$ equals  $\dim (U(L_\lb)\cap U')=\dim U(L_\lb)-\srk L_\lb$.
Consequently,
\begin{multline*}
   \dim \gc(\lb)\md U'=\dim \gc(\lb)-\dim U'+\min_{x\in \gc(\lb)}\dim U'_x=  \\
   =\dim N_\lb+1- (\dim U-\rk G)+(\dim U(L_\lb)-\srk L_\lb)=1+\rk G-\srk L_\lb=1+k .
\end{multline*}

(ii)   By Theorem~\ref{thm1.6},   $\dim \mathsf R(\lb)^{U'}=\prod_{i=1}^r (a_i+1)$, which 
shows that $\ed \gc(\lb)\md U' \ge \prod_{i=1}^r (a_i+1)$.
Therefore, it suffices to prove that the graded algebra 
$\bbk[\gc(\lb)]^{U'}$ is generated by elements of degree $1$.
The weights of $U'$-invariants of degree $n$ are
\[
   \eus I_{n\lb}=\{n\lb-\sum_i b_i\ap_i \mid b_i=0,1,\dots, na_i \} .
\]
In particular, 
\[
   \eus I_\lb=\{\lb-\sum_i b_i\ap_i \mid b_i=0,1,\dots, a_i \} .
\]
Obviously, each element of $\eus I_{n\lb}$ is a sum of $n$ elements of $\eus I_{\lb}$.
Since  $\mathsf R(n\lb)^{U'}$ is a multiplicity free $T$-module, this space
is spanned by products of $n$ elements of $\mathsf R(\lb)^{U'}$.
\end{proof}

\begin{cl}
We have $\hd(\gc(\lb)\md U')=\prod_{i=1}^r(1+a_i) - 1-\#\{j\mid a_j\ne0\}$.
Therefore, 

\textbullet \quad $\hd(\gc(\lb)\md U')=0$ if and only if $\lb$ is fundamental;

\textbullet \quad $\hd(\gc(\lb)\md U')=1$ if and only if $\lb=\varpi_i+\varpi_j$ or $2\varpi_i$.
\end{cl}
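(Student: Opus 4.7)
The plan is to derive the formula from Theorem~\ref{thm:dim-HV} via the identity $\hd(X)=\ed X-\dim X$, which holds for graded Cohen-Macaulay algebras, and then to solve two small combinatorial equations for the characterisations of $\hd\in\{0,1\}$.

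First I would verify that $\bbk[\gc(\lb)]^{U'}$ is Cohen-Macaulay. Because $T$ normalises $U'$, the graded algebra $\bbk[\gc(\lb)]^{U'}=\bigoplus_{n\ge 0}\sfr(n\lb)^{U'}$ is $T$-multigraded, and by Theorem~\ref{thm1.6}(2) every weight space is at most $1$-dimensional. Picking a nonzero vector in each weight that actually occurs identifies $\bbk[\gc(\lb)]^{U'}$ with the semigroup algebra $\bbk[S]$ of an affine sub-semigroup $S\subset\mathfrak X$; cancellativity is automatic since $\bbk[\gc(\lb)]^{U'}$ is a domain. Equivalently, $\gc(\lb)\md U'$ is an affine toric variety (for a suitable quotient of $T$). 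Now $\gc(\lb)$ is normal, so the invariant ring $\bbk[\gc(\lb)]^{U'}$ is normal, and hence so is the semigroup $S$; Hochster's theorem then implies that $\bbk[S]$ is Cohen-Macaulay.

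With Cohen-Macaulayness available, substituting the values
\[
\ed \gc(\lb)\md U'=\prod_{i=1}^r(a_i+1),\qquad \dim\gc(\lb)\md U'=1+\#\{j\mid a_j\ne 0\}
\]
from Theorem~\ref{thm:dim-HV} into $\hd=\ed-\dim$ yields the stated formula.

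For the two equivalences, set $k=\#\{j\mid a_j\ne 0\}$. Since every nonzero $a_j$ contributes a factor at least $2$ to the product, $\prod_i(a_i+1)\ge 2^k$. In the case $\hd=0$ the equation $\prod_i(a_i+1)=1+k$ combined with $2^k\le 1+k$ forces $k\le 1$, and (excluding the trivial $\lb=0$) this gives $\lb$ fundamental. In the case $\hd=1$ one needs $\prod_i(a_i+1)=2+k$ with $2^k\le 2+k$, so $k\le 2$; the sub-case $k=1$ yields $a_i+1=3$, i.e.\ $\lb=2\varpi_i$, while $k=2$ yields $(a_i+1)(a_j+1)=4$, i.e.\ $\lb=\varpi_i+\varpi_j$. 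The only real obstacle is the Cohen-Macaulay step handled in the second paragraph; the rest is essentially a substitution and an elementary inequality.
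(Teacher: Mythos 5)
Your argument is correct, and the combinatorial second half (the inequality $\prod_i(a_i+1)\ge 2^k$ forcing $k\le 1$ resp.\ $k\le 2$) matches what the Corollary needs; but your route to the one nontrivial input, Cohen--Macaulayness of $\bbk[\gc(\lb)]^{U'}$, is genuinely different from the paper's. The paper invokes rational singularities: $\gc(\lb)$ is a normal spherical (hence {\sf HV}) variety, so it has rational singularities by Popov's theorem, and by \cite[Theorem\,2.3]{odno-sv} the quotient $\gc(\lb)\md U'$ inherits rational singularities and is therefore Cohen--Macaulay. You instead exploit the toric structure of $\gc(\lb)\md U'$ (which the paper only records afterwards, in Remark~\ref{rmk:toric}) and apply Hochster's theorem to the resulting normal affine semigroup ring. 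Both work; your version is more elementary and self-contained modulo Hochster, while the paper's yields the stronger conclusion that $\gc(\lb)\md U'$ has rational singularities, not merely that it is Cohen--Macaulay. One small repair is needed in your semigroup identification: the $T$-grading alone need not be multiplicity free across degrees --- e.g.\ for $G=SL_3$ and $\lb=\varpi_1+\varpi_2=\ap_1+\ap_2$ the weight $\lb$ lies in both $\eus I_\lb$ and $\eus I_{2\lb}$ --- so you must grade by $\bbk^\times\times T$ (degree together with $T$-weight), taking $S\subset\BN\times\mathfrak X$; Theorem~\ref{thm1.6}(2) gives multiplicity one only within each fixed degree $n$. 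With that adjustment the semigroup is finitely generated (by Theorem~\ref{thm:dim-HV}(ii)) and saturated because $\bbk[\gc(\lb)]^{U'}$ is normal, so Hochster applies and the rest of your computation goes through, including the sensible exclusion of $\lb=0$ from the $\hd=0$ case.
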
\begin{proof}
As it was mentioned above, the {\sf HV}-varieties have rational singularities. 
In view of \cite[Theorem\,2.3]{odno-sv}, $\gc(\lb)\md U'$ also has rational singularities
and in particular is Cohen-Macaulay. Hence 
$\hd(\gc(\lb)\md U')=\ed \gc(\lb)\md U' - \dim \gc(\lb)\md U'$.
\end{proof}

\begin{rmk}   \label{rmk:toric}
1) \ As above, $k=\rk G-\srk L_\lb$
and hence $\dim \gc(\lb)\md U'=k+1$.
Another consequence of Theorems~\ref{thm1.6}  and \ref{thm:dim-HV} is that 
$\gc(\lb)\md U'$ is a toric variety with respect to $\bbk^\times \times T$, where 
$\bbk^\times$ acts on $\sfr(\lb^*)$ (and hence on $\gc(\lb)$) by homotheties. 
Note that the $T$-action on 
$\gc(\lb)\md U'$ has a non-effectivity kernel of dimension $\rk G-k$.
The quotient morphism
$\pi_{\gc(\lb), U'}$ has the following description. 
Let $\Ann(\sfr(\lb)^{U'})$ be the annihilator of $\sfr(\lb)^{U'}$ in $\sfr(\lb^*)$.
Then $(\sfr(\lb)^{U'})^*=\sfr(\lb^*)/\Ann(\sfr(\lb)^{U'})$ and $\pi_{\gc(\lb), U'}$ is the restriction 
to $\gc(\lb)$ of the projection $\sfr(\lb^*) \to (\sfr(\lb)^{U'})^*$.
Thus, 
$\gc(\lb)\md U'$ is embedded in the vector space $(\sfr(\lb)^{U'})^*$.
Consequently, $\mathbb P(\gc(\lb)\md U')\subset \BP((\sfr(\lb)^{U'})^*)$  is a normal 
toric variety with respect to $T$. As is well-known, a projective toric $T$-variety
can be described via a convex polytope in $\mathfrak X_\BQ$ \cite[5.8]{danilov}.
The polytope corresponding to $\mathbb P(\gc(\lb)\md U')$ is the convex hull of
$\eus I_\lb$. It is a $k$-dimensional parallelepiped, in particular, a simple polytope.
It follows that the corresponding complete fan is simplicial.  Therefore the complex cohomology of
$\mathbb P(\gc(\lb)\md U')$ satisfies  Poincar\'e duality and
has a number of other good properties, see \cite[\S\,14]{danilov}.

2) \ Along with the toric structure (i.e., a dense $T$-orbit), the projective variety $\mathbb P(\gc(\lb)\md U')$ also has 
a dense orbit of the commutative unipotent group $U/U'$.
\end{rmk}

\section{Actions of $U'$ on arbitrary $\eus S$-varieties}    
\label{sect:S}

\noindent
Let $\gc(\mathfrak S)$ be an $\eus S$-variety. 
In this section, we answer the following questions:

{\bf --}  When is $\gc(\mathfrak S)\md U'$ an affine space?

{\bf --} Suppose that $\gc(\mathfrak S)\md U'$ is an affine space. When is 
$\pi_{\gc(\mathfrak S),U'}$ equidimensional?

We begin with a formula for $\dim\gc(\mathfrak S)\md U'$, which generalises
Theorem~\ref{thm:dim-HV}(i).

\begin{prop}    \label{prop:dim-mod-U'}
$\dim \gc(\mathfrak S)\md U'=\rk \mathfrak S+ (\rk G-\srk L_\mathfrak S)$.
\end{prop}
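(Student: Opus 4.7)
The plan is to imitate the dimension count done inside the proof of Theorem~\ref{thm:dim-HV}(i), now working with the big parabolic $P_\mathfrak S = L_\mathfrak S N_\mathfrak S$ in place of $P_\lb$. Since $U'$ is a connected unipotent group acting on the irreducible affine variety $\gc(\mathfrak S)$, the standard dimension formula gives
\[
\dim \gc(\mathfrak S)\md U' \;=\; \dim \gc(\mathfrak S) - \dim U' + \min_{x\in \gc(\mathfrak S)}\dim U'_x,
\]
and by Theorem~\ref{thm:vp}(4) the first term is $\dim N_\mathfrak S + \rk\mathfrak S$, while the second is $\dim U - \rk G$. So the whole task is to compute the minimal stabiliser dimension.

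The key geometric step is the identification of the generic $U$-stabiliser on $\gc(\mathfrak S)$. I would argue that $U_\bv = U(L_\mathfrak S):=U\cap L_\mathfrak S$. For each $i$ the lowest weight vector $v_{-\lb_i}$ is fixed by the opposite parabolic $P_{\lb_i}^-$ intersected with $U$, which reduces to $U\cap L_{\lb_i}=U(L_{\lb_i})$; intersecting over $i$ and using $\bigcap_i L_{\lb_i}=L_\mathfrak S$ yields $U_\bv=U(L_\mathfrak S)$. Because $B{\cdot}\bv$ is dense in $\gc(\mathfrak S)$ (sphericity) and $T$ normalises $U(L_\mathfrak S)$, $U$-stabilisers at generic points of $\gc(\mathfrak S)$ are all conjugate to $U(L_\mathfrak S)$; in particular the minimal $U$-stabiliser dimension equals $\dim U(L_\mathfrak S)$. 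Now \cite[Lemma\,2.5]{odno-sv} (already invoked in the proof of Theorem~\ref{thm:dim-HV}) translates this into the minimal $U'$-stabiliser dimension:
\[
\min_x \dim U'_x \;=\; \dim\bigl(U(L_\mathfrak S)\cap U'\bigr) \;=\; \dim U(L_\mathfrak S) - \srk L_\mathfrak S.
\]

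Putting everything together and using $\dim U = \dim N_\mathfrak S + \dim U(L_\mathfrak S)$, the terms $\dim N_\mathfrak S$ and $\dim U(L_\mathfrak S)$ cancel cleanly, leaving
\[
\dim \gc(\mathfrak S)\md U' \;=\; \rk\mathfrak S + \rk G - \srk L_\mathfrak S,
\]
which is the desired formula.

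The only nontrivial point is the claim $U_\bv=U(L_\mathfrak S)$; once this is justified (via the parabolic/Levi description of stabilisers of lowest weight lines and the fact that $U$ meets the opposite unipotent radicals trivially), the rest is bookkeeping. Everything else is a direct specialisation of what was already carried out in the HV case.
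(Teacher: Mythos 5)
Your proof is correct and takes essentially the same route as the paper: the same dimension formula for the quotient by the unipotent group $U'$, the identification of $U(L_\mathfrak S)$ as the generic $U$-stabiliser on $\gc(\mathfrak S)$, the appeal to \cite[Lemma 2.5]{odno-sv}, and the same final bookkeeping via $\dim U=\dim N_\mathfrak S+\dim U(L_\mathfrak S)$. The only (immaterial) difference is that you verify $U_\bv=U(L_\mathfrak S)$ directly from the parabolic description of stabilisers of lowest weight vectors, whereas the paper deduces the generic-stabiliser statement from the equality $\dim\gc(\mathfrak S)\md U=\rk\mathfrak S$.
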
\begin{proof}
By Theorem~\ref{thm:vp}, $\dim \gc(\mathfrak S)=\dim N_\mathfrak S+\rk \mathfrak S$
and $\dim \gc(\mathfrak S)\md U=\rk \mathfrak S$. This readily implies that 
$U(L_\mathfrak S):=U\cap L_\mathfrak S$ is a generic stabiliser for the $U$-action on
$\gc(\mathfrak S)$.  By \cite[Lemma\,2.5]{odno-sv}, the minimal dimension of stabilisers for the $U'$-action on
$\gc(\mathfrak S)$ equals  $\dim (U(L_\mathfrak S)\cap U')=
\dim U(L_\mathfrak S)-\srk L_\mathfrak S$.
Consequently,
\begin{multline*}
   \dim \gc(\mathfrak S)\md U'=\dim \gc(\mathfrak S)-\dim U'+\min_{x\in \gc(\mathfrak S)}\dim U'_x=  \\
   =\dim N_\mathfrak S+\rk\mathfrak S- (\dim U-\rk G)+(\dim U(L_\mathfrak S)-\srk L_\mathfrak S)=
   \rk\mathfrak S+(\rk G-\srk L_\mathfrak S) .
\end{multline*}
Here we use the fact that  $U$ is a semi-direct product of $N_\mathfrak S$ and
$U(L_\mathfrak S)$.
\end{proof}

\begin{rema}
Note that $\rk \mathfrak S\le \rk G-\srk L_\mathfrak S$, and the equality here is equivalent 
to the fact that  the space $\langle\mathfrak S\rangle$ has a basis that consists of
fundamental weights.
\end{rema}

\begin{thm}   \label{thm:S-var}
Let $\mathfrak S\subset \mathfrak X_+$ be an arbitrary finitely generated monoid.  
Then $\gc(\mathfrak S)\md U'$ is an affine space if and only if $\mathfrak S$ is 
generated by fundamental weights.
\end{thm}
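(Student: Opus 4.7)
The theorem has two directions, one of which is essentially given to us. For the ``if'' direction, if $\mathfrak S$ is generated by fundamental weights $\{\varpi_i\mid i\in M\}$, then Theorem~\ref{thm:some-fund} already gives that $\bbk[\gc(\mathfrak S)]^{U'}$ is freely generated by any basis of $\bigoplus_{i\in M}\sfr(\varpi_i)^{U'}$; hence $\gc(\mathfrak S)\md U'\simeq \BA^{2|M|}$. (As a sanity check, this agrees with Proposition~\ref{prop:dim-mod-U'}, since $\rk\mathfrak S=|M|$ and $\srk L_\mathfrak S=r-|M|$.)

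For the ``only if'' direction, the plan is to reduce to the \textsf{HV}-variety computation of Theorem~\ref{thm:HV-free} by localising at a single minimal generator. Suppose a minimal generator $\lb_k$ of $\mathfrak S$ is not fundamental. The key structural fact is that because $\lb_k$ is minimal in the monoid $\mathfrak S$, the component $\sfr(\lb_k)$ of the $\mathfrak S$-multigrading of $\bbk[\gc(\mathfrak S)]=\bigoplus_{\lb\in\mathfrak S}\sfr(\lb)$ cannot be produced by products of elements of components $\sfr(\lb')$ with $\lb'\in\mathfrak S\smallsetminus\{\lb_k\}$. Consequently, any minimal generating system for $\bbk[\gc(\mathfrak S)]^{U'}$ must contain a full basis of $\sfr(\lb_k)^{U'}$.

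Now I would transplant the two calculations from the proof of Theorem~\ref{thm:HV-free}. Writing $\lb_k=\sum_i a_i\varpi_i$, split into two cases. If two coefficients $a_i,a_j\ge 1$ with $i\ne j$, consider
\[
  p=f_{\lb_k}f_{\lb_k-\ap_i-\ap_j}-f_{\lb_k-\ap_i}f_{\lb_k-\ap_j}
  \in \sfr(2\lb_k)\subset \bbk[\gc(\mathfrak S)].
\]
A direct check using the rules $e_l(f_\mu)=f_{\mu+\ap_l}$ or $0$ from Section~\ref{subs:gen} shows that $p$ is $U$-invariant of $T$-weight $2\lb_k-\ap_i-\ap_j$; but $\sfr(2\lb_k)^U$ is one-dimensional and has weight $2\lb_k$, so $p\equiv 0$. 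If instead $\lb_k=a\varpi_i$ with $a\ge 2$, the analogous element $\tilde p=2f_{\lb_k}f_{\lb_k-2\ap_i}-f_{\lb_k-\ap_i}^2$ is $U$-invariant of weight $2\lb_k-2\ap_i\ne 2\lb_k$, and so vanishes. Either way we obtain a genuine nontrivial relation among elements of a minimal generating set (the four, respectively three, vectors involved are linearly independent in $\sfr(\lb_k)^{U'}$ by Theorem~\ref{thm1.6}(2)), and therefore $\bbk[\gc(\mathfrak S)]^{U'}$ is not a polynomial algebra.

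The only potential obstacle is the indecomposability claim for $\sfr(\lb_k)^{U'}$ inside $\bbk[\gc(\mathfrak S)]^{U'}$; I would justify it cleanly by noting that the surjection $\bbk[\gc(\mathfrak X_+)]\twoheadrightarrow \bbk[\gc(\mathfrak S)]$ used in Theorem~\ref{thm:some-fund} respects the $T$-weight $\mathfrak S$-multigrading and that $\lb_k$, being a minimal element of $\mathfrak S$, is not a sum of two nonzero elements of $\mathfrak S$. Beyond this the argument is a routine adaptation of the \textsf{HV} case, so the main work is just verifying that the $U$-invariance of $p$ (and $\tilde p$) survives in the multigraded algebra $\bbk[\gc(\mathfrak S)]$ of an arbitrary $\eus S$-variety.
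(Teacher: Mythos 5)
Your proposal is correct and follows essentially the same route as the paper: the ``if'' direction is Theorem~\ref{thm:some-fund}, and for ``only if'' the paper likewise observes that any generating system of $\bbk[\gc(\mathfrak S)]^{U'}$ must contain a basis of $\sfr(\lb)^{U'}$ for each minimal generator $\lb$ and then invokes the relations $p$, $\tilde p$ from the proof of Theorem~\ref{thm:HV-free}. Your write-up merely makes explicit the details the paper compresses into ``arguing as in the proof of Theorem~\ref{thm:HV-free}'' (the paper also notes, in brackets, an alternative via Proposition~\ref{prop:dim-mod-U'} and the inequality $\dim\gc(\mathfrak S)\md U'\ge 2\rk\mathfrak S$).
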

\begin{proof} 
1)  \ Suppose that $\gc(\mathfrak S)\md U'$ is an affine space.
If $\lb$ is a generator of $\mathfrak S$, 
then any generating system of $\bbk[\gc(\mathfrak S)]^{U'}$ contains a basis for
$\sfr(\lb)^{U'}$. Arguing as in the proof of Theorem~\ref{thm:HV-free}, we conclude that $\lb$ must be a fundamental weight.  [Another way is to use Proposition~\ref{prop:dim-mod-U'} and 
the inequality $\dim \gc(\mathfrak S)\md U'\ge 2\rk\mathfrak S$.]

2) The converse is contained in Theorem~\ref{thm:some-fund}.
 \end{proof}

\noindent
In the rest of this section, we only consider monoids generated by  fundamental weights.
Fix a numbering of the simple roots  (fundamental weights). 
For any $M\subset\{1,2,\dots,r\}$, let $\gc(M)$ denote the $\eus S$-variety 
corresponding to the monoid $\mathfrak S=\sum_{i\in M} \BN \varpi_i$.
Our aim is to characterise the subsets $M$ having the property that
$\pi_{U'}:  \gc(M)\to \gc(M)\md U'$ is equidimensional. The origin 
(vertex) is the only $G$-fixed point
of $\gc(M)$ and the corresponding fibre
of $\pi_{U'}$ (the {\it null-cone}) is denoted by
$\fN_{U'}(M)$.  

Recall that  $\bbk[\gc(M)]$ is a graded Cohen-Macaulay ring and $\bbk[\gc(M)]^{U'}$ 
is a polynomial algebra freely generated by  $\{f_i,\tilde f_i \mid i\in M\}$ 
(Theorem~\ref{thm:some-fund}). Therefore, $\pi_{U'}$ is equidimensional 
{\sl if and only if\/} the functions $\{f_i,\tilde f_i \mid i\in M\}$ form a regular sequence in 
$\bbk[\gc(M)]$ 
{\sl if and only if\/}  $\dim \fN_{U'}(M)=\dim \gc(M)- 2(\#M)$
\cite[\S\,17]{gerry}.

\begin{df}    \label{def:sparse}
A subset $M\subset \{1,\dots,r\}$ is said to be {\it sparse\/}, if 
1) the roots $\ap_i$ with $i\in M$ are pairwise orthogonal,
i.e., disjoint in the Dynkin diagram; 2) there are no $i,j\in M$ and no $k\not\in M$ such that 
$(\ap_k,\ap_i)<0$ and $(\ap_k,\ap_j)<0$, i.e., 
$\ap_k$ is adjacent to both $\ap_i$ and $\ap_j$. 
\\ 
Accordingly, we  say that a certain set of fundamental weights (simple roots) is {\it sparse}.
\end{df}

Clearly, if $M$ is sparse and $J\subset M$, then $J$ is also sparse.
\begin{lm}    \label{lm:mult1}
Let $\ap_{i_1},\dots,\ap_{i_l}$ be a sequence of different simple roots such that
$\ap_{i_j},\ap_{i_{j+1}}$ are adjacent for $j=1,2,\dots,l-1$). Then $\mu:=\varpi_{i_1}-\sum_{j=1}^l \ap_{i_j}$ is a weight of $\sfr(\varpi_{i_1})$ and
$\dim \sfr(\varpi_{i_1})_\mu=1$.
\end{lm}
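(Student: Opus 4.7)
The plan is to establish first that $\mu$ is a weight of $\sfr(\varpi_{i_1})$, and then that its weight space is one-dimensional, by induction on $l$ in both cases.

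For the first assertion, the case $l=1$ is immediate from $\langle\varpi_{i_1},\alpha_{i_1}^\vee\rangle=1$, which makes $\varpi_{i_1}-\alpha_{i_1}$ appear in the $\mathfrak{sl}_2(\alpha_{i_1})$-string through $v_+$. For $l\ge 2$, let $v_{l-1}$ be a nonzero weight vector at $\mu_{l-1}:=\varpi_{i_1}-\sum_{j<l}\alpha_{i_j}$ (which exists by induction). Since $i_l\ne i_1$ and since the Dynkin diagram of $G$ is a forest (so among $\alpha_{i_1},\dots,\alpha_{i_{l-1}}$ only $\alpha_{i_{l-1}}$ is adjacent to $\alpha_{i_l}$, all consecutive pairs being adjacent and all indices distinct), one computes
\[
\langle\mu_{l-1},\alpha_{i_l}^\vee\rangle \;=\; \delta_{i_1,i_l}-\sum_{j<l}\langle\alpha_{i_j},\alpha_{i_l}^\vee\rangle \;=\; -\langle\alpha_{i_{l-1}},\alpha_{i_l}^\vee\rangle \;\ge\; 1.
\]
By $\mathfrak{sl}_2(\alpha_{i_l})$-theory this forces $f_{i_l}v_{l-1}\ne 0$, which is then a weight vector of weight $\mu_l$.

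For the multiplicity claim, the natural move is to pass to a Levi subgroup. Let $L\subset G$ be the Levi with simple roots $\{\alpha_{i_j}\}_{j=1}^{l}$; by the path hypothesis its Dynkin diagram is a simple sub-path (of types $A_l$, or $B_l$, $C_l$, $F_4$, $G_2$ when multiple edges occur). Since $\varpi_{i_1}-\mu=\sum_{j}\alpha_{i_j}$ is supported entirely in the simple roots of $L$, every element of $\sfr(\varpi_{i_1})_\mu$ already lies in $U(\mathfrak n_L^-)v_+$, hence in the simple $L$-submodule $L\cdot v_+$, whose highest weight is the restriction $\varpi_{i_1}|_{T\cap L}$—the fundamental weight of $L$ attached to the endpoint $\alpha_{i_1}$ of its path diagram. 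Thus it suffices to prove the multiplicity assertion inside this $L$-module.

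To conclude, I would use PBW: any weight vector at $\mu$ in the $L$-module is a linear combination of monomials $f_{\beta_1}\cdots f_{\beta_s}v_+$ with $\beta_k\in\Delta_L^+$ and $\sum_k\beta_k=\sum_j\alpha_{i_j}$. Combining (i) $f_j v_+=0$ for $j\ne i_1$, (ii) the hw relation $f_{i_1}^2 v_+=0$, and (iii) $[f_{i_a},f_{i_b}]=0$ whenever $|a-b|\ge 2$ (since non-consecutive indices in a path are non-adjacent in the Dynkin diagram), one shows by downward induction on the number of ``out-of-order'' swaps that every such monomial collapses to a scalar multiple of $f_{i_l}f_{i_{l-1}}\cdots f_{i_1}v_+$: the first operator applied to $v_+$ must be $f_{i_1}$ by (i), the next must be $f_{i_2}$ by (iii) applied to all options excluded by (i) and the permutation constraint, and so on. The delicate point—which I expect to be the main obstacle—is handling monomials involving $f_\beta$ for a \emph{non-simple} positive root $\beta$ of $L$, relevant in the multiply-laced types. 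Each such $f_\beta$ is expanded as an iterated bracket of simple $f_{i_j}$'s via the Chevalley structure constants, and the above rules (i)–(iii) collapse the result to the same canonical monomial; alternatively this small residue can be checked by direct inspection against the weight structure of the fundamental representations of the simple groups with path Dynkin diagram.
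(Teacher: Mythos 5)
Your proof is correct, but for the multiplicity statement it takes a genuinely different route from the paper: there the second assertion is dispatched in one line by citing the Berenstein--Zelevinsky multiplicity-one criterion \cite[Prop.~2.2]{berzel}, whereas you give a self-contained straightening argument (your proof of the first assertion is exactly the induction the paper has in mind). Two simplifications make your argument watertight and remove the point you flag as delicate. First, the PBW monomials involving $f_\beta$ for non-simple $\beta\in\Delta_L^+$ are not an obstacle at all: since $U(\n_L^-)$ is generated as an algebra by the simple root vectors, and since the deficiency $\varpi_{i_1}-\mu=\sum_j\ap_{i_j}$ has every simple-root coefficient equal to $1$, the weight space $\sfr(\varpi_{i_1})_\mu$ is already spanned by the $l!$ words $f_{i_{\sigma(1)}}\cdots f_{i_{\sigma(l)}}v_+$, $\sigma\in S_l$ --- no non-simple root vectors and no case-by-case inspection of multiply-laced types are needed. (For the same reason the passage to the Levi $L$ is dispensable, and your relation (ii) $f_{i_1}^2v_+=0$ is never used, each letter occurring exactly once.) Second, with this spanning set your commutation argument closes cleanly: the rightmost letter must be $f_{i_1}$ by (i), and if the rightmost $k$ letters are $f_{i_k}\cdots f_{i_1}$, then any next letter $f_{i_m}$ with $m\ge k+2$ commutes past all of them (non-consecutive vertices of a path in a tree are non-adjacent) and annihilates $v_+$; hence every nonzero monomial equals $f_{i_l}\cdots f_{i_1}v_+$ and the multiplicity is at most one, while part one shows it is attained. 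What the paper's citation buys is brevity; what your argument buys is an elementary proof that also exhibits an explicit generator of the weight space.
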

\begin{proof}
The first assertion is easily proved by induction on $l$. The second assertion follows from 
\cite[Prop.\,2.2]{berzel}
\end{proof}

\begin{thm}   \label{thm:eq-sparse1}
If the quotient
$\pi_{U'}:  \gc(M)\to \gc(M)\md U'$ is equidimensional, then $M$ is sparse.
\end{thm}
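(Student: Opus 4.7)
The plan is to argue by contrapositive: assuming $M$ is not sparse, I will exhibit an irreducible closed subvariety of $\fN_{U'}(M)$ whose dimension strictly exceeds $\dim\gc(M)-2\#M=\#\Delta^+_M-\#M$. This violates equidimensionality, because $\bbk[\gc(M)]$ is Cohen--Macaulay (normal $\eus S$-varieties have rational singularities, as recalled in \S\ref{subs:horosph}) and $\bbk[\gc(M)]^{U'}=\bbk[f_i,\tilde f_i:i\in M]$ is polynomial in $2\#M$ generators by Theorem~\ref{thm:some-fund}; so equidimensionality is equivalent to $\{f_i,\tilde f_i\}_{i\in M}$ being a regular sequence, hence to $\dim \fN_{U'}(M)=\#\Delta^+_M-\#M$.

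The first step is a weight-theoretic criterion: for $w\in W$, the closure $\overline{B\dot w\bv}$ is contained in $\fN_{U'}(M)$ if and only if $w\varpi_i\notin\{\varpi_i,\varpi_i-\ap_i\}$ for every $i\in M$. Indeed, viewed as a linear function on the ambient space $\bigoplus_{k\in M}\sfr(\varpi_k^*)$, the covariant $f_i$ (resp.\ $\tilde f_i$) pairs nontrivially only with vectors of weight $-\varpi_i$ (resp.\ $-\varpi_i+\ap_i$) in the $i$-th factor; since $u\dot w v_{-\varpi_i}$ ($u\in U$) has weights $\mu$ with $-w\varpi_i\curle\mu\curle\varpi_i^*$, and since the dominance $w\varpi_i\curle\varpi_i$ is automatic, the necessary weights fail to appear precisely under the stated condition. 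Moreover $\dim B\dot w\bv=\dim U\dot w\bv+\#M$, since the $T$-orbit of $\dot w\bv$ has dimension $\#M$ (the weights $\{-w\varpi_i\}_{i\in M}$ are linearly independent).

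It then remains to produce, in each non-sparse scenario, an element $w$ satisfying the criterion with $\dim U\dot w\bv>\#\Delta^+_M-2\#M$. If there exist $i,j\in M$ with $\ap_i,\ap_j$ adjacent, I would take $w$ beginning with the longest element of the rank-$2$ parabolic Weyl subgroup $W_{\{i,j\}}$ (equal to $s_is_js_i$ in type $A_2$, with obvious analogues in $B_2$ and $G_2$), which sends both $\varpi_i,\varpi_j$ far from their forbidden sets; further simple reflections supported off $\{i,j\}$ are appended to arrange $w\varpi_k\notin\{\varpi_k,\varpi_k-\ap_k\}$ for remaining $k\in M\setminus\{i,j\}$. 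If instead $i,j\in M$ admit a common neighbour $k\notin M$, then $s_k$ shifts $\varpi_i$ and $\varpi_j$ simultaneously to $\varpi_i-\ap_i-\ap_k$ and $\varpi_j-\ap_j-\ap_k$, satisfying both constraints at once; again, extra reflections handle the rest of $M$. The lower bound on $\dim U\dot w\bv$ is obtained by computing the image of $\Lie U\to T_{\dot w\bv}(\text{ambient})$ and counting positive roots $\beta$ for which $E_\beta\dot w\bv$ contributes a new weight component, with Lemma~\ref{lm:mult1} guaranteeing the existence of the relevant weight vectors along paths in the Dynkin diagram.

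The principal obstacle is the uniform construction of $w$ when $\#M$ is large, since the criterion must hold at every $i\in M$ simultaneously; controlling how successive simple reflections interact with each $\varpi_k$, without destroying the constraints already secured, is the technical core of the argument.
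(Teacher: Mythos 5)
Your reduction (equidimensional $\Leftrightarrow$ $\{f_i,\tilde f_i\}_{i\in M}$ is a regular sequence $\Leftrightarrow$ $\dim\fN_{U'}(M)=\dim\gc(M)-2\#M$) and your weight criterion for $B\dot w\bv\subset\fN_{U'}(M)$ are both correct, and the overall strategy — exhibit a too-large $B$-stable subvariety of the null-cone indexed by a Weyl group element $w$ with $\ell(w)<2\#M$ and $w\varpi_k\prec\varpi_k-\ap_k$ for all $k\in M$ — is a genuinely different route from the paper's. The paper argues purely algebraically: for an adjacent pair $i,j\in M$ (resp.\ a common neighbour $k\notin M$) it writes down an explicit relation $f_ip_j-\tilde f_i\tilde f_j+p_if_j=0$ (resp.\ $f_iq_j-\tilde f_ip_j+p_i\tilde f_j-q_if_j=0$) in $\bbk[\gc(M)]$, coming from the vanishing of the non-Cartan component of $\sfr(\varpi_i)\otimes\sfr(\varpi_j)$, and this single local relation among four of the generators already kills regularity of the whole sequence; no global construction over all of $M$ is needed. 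Your approach is essentially the converse direction of the paper's Lemma~\ref{lm:dlina} and, if completed, would give a geometric explanation of the failure; but it is strictly harder because the null-cone condition must be met at \emph{every} $i\in M$ simultaneously.

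That simultaneity is exactly where your argument has a genuine gap, which you yourself flag but do not close. Two concrete problems. First, your local seeds are not quite right: $s_k$ alone fixes $\varpi_i$ for $k\ne i$ (you need something like $s_ks_is_j$ in the common-neighbour case), and in the adjacent case the longest element of $W_{\{i,j\}}$ has length $4$ in type $B_2$ and $6$ in $G_2$, which already equals or exceeds $2\#\{i,j\}$ and so fails to give the strict inequality when $M=\{i,j\}$; the correct choice is $w=s_is_js_i$ of length $3$, which one checks works for any bond multiplicity. Second, and more seriously, "appending further simple reflections to handle the remaining $k\in M$" is not innocuous: prepending $s_l$ to $w_1$ replaces $w_1\varpi_k$ by $w_1\varpi_k-\langle w_1\varpi_k,\ap_l^\vee\rangle\ap_l$, which can move the weight \emph{up} in the root order (indeed back to $\varpi_k-\ap_k$ when $w_1\varpi_k=s_ls_k\varpi_k$), so the constraints already secured are not automatically preserved. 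One also cannot retreat to a smaller stratum $J\subsetneq M$ to avoid the problem, since there the fibres of the resolution $G\times_{P_I^-}V\to\gc(M)$ have dimension $\dim P_{\tilde I}^-/P_I^-$, which is in general much larger than $m-\#J$, destroying the dimension count. So the combinatorial lemma your proof needs — for every non-sparse $M$ there exists $w$ with $\ell(w)\le 2\#M-1$ and $w\varpi_k\prec\varpi_k-\ap_k$ for all $k\in M$ — is plausible (and checks out in examples) but is left unproved, and proving it requires a careful choice of the auxiliary neighbours $k'$ and of the order of the reflections. As it stands the argument is a plan rather than a proof.
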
\begin{proof}
As we already know, $\bbk[\gc(M)]^{U'}$ is freely generated by the functions 
$\{f_i,\tilde f_i \mid i\in M\}$. 
Assuming that $M$ is not sparse, we point out certain relations in $\bbk[\gc(M)]$, which show 
that these free generators do not form a regular sequence.
There are two possibilities for that.

\textbullet \quad Suppose first that  $\ap_i$ and $\ap_j$ are adjacent simple roots
for some $i,j\in M$. 
Then $\lb_{ij}:=\varpi_i+\varpi_j-\ap_i-\ap_j$ is dominant.
Consider upper parts of the Hasse diagrams of weight posets for $\mathsf R(\varpi_i)$ and $\mathsf R(\varpi_j)$:

$\mathsf R(\varpi_i)$:   
\begin{picture}(300,40)(10,8)
\multiput(30,12)(60,0){3}{\circle{6}}
\multiput(33,12)(60,0){2}{\line(1,0){54}}
\put(153,12){\line(1,0){20}}
\put(25,20){$\varpi_i$}
\put(25,-5){$f_i$}
\put(85,-5){$\tilde f_i$}
\put(145,-5){$p_i$}
\put(78,20){$\varpi_i{-}\ap_i$}
\put(133,20){$\varpi_i{-}\ap_i{-}\ap_j$}
\put(180,12){\dots}
\end{picture}

$\mathsf R(\varpi_j)$:   
\begin{picture}(300,45)(10,8)
\multiput(30,12)(60,0){3}{\circle{6}}
\multiput(33,12)(60,0){2}{\line(1,0){54}}
\put(153,12){\line(1,0){20}}
\put(25,20){$\varpi_i$}
\put(25,-5){$f_j$}
\put(85,-5){$\tilde f_j$}
\put(145,-5){$p_j$}
\put(78,20){$\varpi_j{-}\ap_j$}
\put(133,20){$\varpi_j{-}\ap_i{-}\ap_j$}
\put(180,12){\dots}
\end{picture}

\vskip2ex

\noindent In these figures, each node depicts a weight space, and we put the weight 
over the node and a weight vector under the node. 
There can be other edges incident to the node $\varpi_i-\ap_i$ (if there exist other simple
roots adjacent to $\ap_i$), but we do not need them.
By Lemma~\ref{lm:mult1}, the weight spaces
$\sfr(\varpi_i)_{\varpi_i}$, $\sfr(\varpi_i)_{\varpi_i-\ap_i}$, and $\sfr(\varpi_i)_{\varpi_i-\ap_i-\ap_j}$ are one-dimensional. 
Here $f_i,\tilde f_i$, and $p_i$ are normalised such that $e_i(\tilde f_i)=f_i$ and
$e_j(p_i)=\tilde f_i$;
and likewise for $\mathsf R(\varpi_j)$. 
Note also that $e_i(p_i)=0$, since $\varpi_i-\ap_j$ is not a weight of
$\sfr(\varpi_i)$. It is then easily seen that
\[
  f_i\otimes p_j-\tilde f_i\otimes \tilde f_j+p_i\otimes f_j
\]   
is a $U$-invariant of weight 
$\lb_{ij}$ in $\mathsf R(\varpi_i)\otimes \mathsf R(\varpi_j)$.
However, only the Cartan component of
$\mathsf R(\varpi_i)\otimes \mathsf R(\varpi_j)$ survives in the algebra $\bbk[\gc(M)]$, i.e.,
in the product $\mathsf R(\varpi_i){\cdot}\mathsf R(\varpi_j)$.
Consequently, $f_ip_j-\tilde f_i \tilde f_j+p_i f_j=0$ in $\bbk[\gc(M)]$. 
This means that $(f_i,f_j,\tilde f_i,\tilde f_j)$ is not a regular sequence
in $\bbk[\gc(M)]$.

\textbullet \quad Yet another possibility is that there are $k\not\in M$ and
$i,j\in M$ such that  $\ap_k$  is adjacent to  both $\ap_i$ and $\ap_j$. 
Here one verifies that 
$\tilde\lb_{ij}:=\varpi_i+\varpi_j-\ap_i-\ap_k-\ap_j$ is dominant. In this situation, we need
larger fragments of the weight posets:

$\mathsf R(\varpi_i)$:   
\begin{picture}(300,40)(10,8)
\multiput(30,12)(60,0){4}{\circle{6}}
\multiput(33,12)(60,0){3}{\line(1,0){54}}
\put(213,12){\line(1,0){20}}
\put(25,20){$\varpi_i$}
\put(25,-5){$f_i$}
\put(85,-5){$\tilde f_i$}
\put(145,-5){$p_i$}
\put(205,-5){$q_i$}
\put(78,20){$\varpi_i{-}\ap_i$}
\put(133,20){$\varpi_i{-}\ap_i{-}\ap_k$}
\put(200,20){$\varpi_i{-}\ap_i{-}\ap_k{-}\ap_j$}
\put(240,12){\dots}
\end{picture}

$\mathsf R(\varpi_j)$:   
\begin{picture}(300,45)(10,8)
\multiput(30,12)(60,0){4}{\circle{6}}
\multiput(33,12)(60,0){3}{\line(1,0){54}}
\put(213,12){\line(1,0){20}}
\put(25,20){$\varpi_i$}
\put(25,-5){$f_j$}
\put(85,-5){$\tilde f_j$}
\put(145,-5){$p_j$}
\put(205,-5){$q_j$}
\put(78,20){$\varpi_j{-}\ap_j$}
\put(133,20){$\varpi_j{-}\ap_j{-}\ap_k$}
\put(200,20){$\varpi_j{-}\ap_j{-}\ap_k{-}\ap_i$}
\put(240,12){\dots}
\end{picture}

\vskip2ex  

\noindent
Here all the weight spaces are one-dimensional by Lemma~\ref{lm:mult1}, and
we follow the same conventions as above. Additionally, we assume that
$e_j(q_i)=p_i$.  
Note that  $e_k(q_i)=0$ and $e_i(q_i)=0$, since neither 
$\varpi_i{-}\ap_i{-}\ap_j$ nor $\varpi_i{-}\ap_k{-}\ap_j$ is  
a weight of $\mathsf R(\varpi_i)$. (And likewise for  $\mathsf R(\varpi_j)$.)
Then
   $ f_i\otimes q_j-\tilde f_i\otimes p_j+p_i\otimes \tilde f_j-q_i\otimes f_j$
is a $U$-invariant of weight $\tilde\lb_{ij}$, and hence 
\beq   \label{eq:sootnoshenie}
    f_iq_j-\tilde f_ip_j+p_i\tilde f_j-q_if_j=0
\eeq    
in $\bbk[\gc(M)]$ for the same reason as above.
This again implies that $(f_i,f_j,\tilde f_i,\tilde f_j)$ is not a regular sequence
in $\bbk[\gc(M)]$.
\end{proof}

\begin{ex}
Let $\g=\mathfrak{sl}_4$ and $M=\{1, 3\}$ in the usual numbering of $\Pi$. 
Then $\dim \mathsf R(\varpi_1)=\dim \mathsf R(\varpi_3)=4$ and 
$\dim\gc(M)=7$. 
In this case, the above $4$-node fragments provide the whole weight posets.
Therefore,  $\sfr(\varpi_1)=\langle f_1,\tilde f_1, p_1,q_1\rangle$, 
$\sfr(\varpi_3)=\langle f_3,\tilde f_3, p_3,q_3\rangle$,
and \eqref{eq:sootnoshenie} with $(i,j)=(1,3)$ is the equation of the hypersurface
$\gc(M)$. Since $\dim\gc(M)\md U'=4$ and
$\fN_{U'}(M)\supset \langle p_1,q_1, p_3,q_3\rangle$, the morphism $\pi_{U'}$ is not
equidimensional.
\end{ex}

To prove the converse to Theorem~\ref{thm:eq-sparse1}, we need some 
preparations. Recall that the partial order ``$\curle$'' is defined in \ref{subs:gen}. We
also write $\nu\prec\mu$ if $\nu \curle \mu$ and $\mu\ne\nu$.

\begin{lm}   \label{lm:dlina}
Suppose that $M$ is sparse and  $w\in W$ has the property that
$w(\varpi_i) \prec \varpi_i-\ap_i$ for all $i\in M$.  Then $\ell(w) \ge 2{\cdot}\#(M)$.
\end{lm}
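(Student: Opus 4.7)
The plan is to fix a reduced expression $w = s_{j_1}\cdots s_{j_l}$ of length $l=\ell(w)$ and exploit the standard telescoping identity
\[
\varpi_i - w(\varpi_i) \;=\; \sum_{k\in K_i}\beta_k,\qquad K_i:=\{k:j_k=i\},\quad \beta_k := s_{j_1}\cdots s_{j_{k-1}}(\alpha_i),
\]
in which every $\beta_k$ is a positive root (because the expression is reduced). The hypothesis $w(\varpi_i)\prec\varpi_i-\alpha_i$ says that $\varpi_i-w(\varpi_i)\curge\alpha_i$ strictly, so $K_i$ is nonempty for every $i\in M$ and the coefficient of $\alpha_i$ in the right-hand side is at least $1$. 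My aim is to attach to every $i\in M$ a pair of distinct indices in $\{1,\ldots,l\}$ and then to prove, using sparseness, that the resulting $2|M|$ indices are all distinct.

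The attachment splits into two cases. If $|K_i|\ge 2$, I simply pick any two distinct elements of $K_i$. If $K_i=\{k_i\}$, then $\beta_{k_i}=\varpi_i-w(\varpi_i)$ is itself a positive root, and the strict hypothesis gives $\beta_{k_i}\ne\alpha_i$. Hence $\beta_{k_i}$ is a non-simple positive root, its support has cardinality at least $2$ (no root is a proper multiple of a single simple root), is connected, and still contains $\alpha_i$. Consequently, the support contains some Dynkin neighbour $\alpha_{m(i)}$ of $\alpha_i$. Since
\[
\operatorname{supp}(\beta_{k_i})=\operatorname{supp}\bigl(s_{j_1}\cdots s_{j_{k_i-1}}(\alpha_i)\bigr)\subseteq\{\alpha_i\}\cup\{\alpha_{j_k}:k<k_i\},
\]
there is $k'_i<k_i$ with $j_{k'_i}=m(i)$; I attach $(k_i,k'_i)$ to $i$.

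It remains to check distinctness of the $2|M|$ attached indices, and this is where both parts of sparseness are used. Every attached index $k$ has $j_k\in M$ or else $j_k=m(i)\notin M$ for some $i$ with $|K_i|=1$, the latter observation using part 1 of sparseness (no two simple roots indexed by $M$ are adjacent, so Dynkin neighbours of $\alpha_i$ with $i\in M$ automatically lie outside $M$). Indices with $j_k\in M$ partition among the disjoint sets $K_i$, so they cannot collide across distinct elements of $M$; an index with $j_k\notin M$ cannot collide with one having $j_k\in M$. The only remaining potential collision is $k'_i=k'_j$ for $i\ne j$ both with singleton $K_\cdot$, which would force $m(i)=m(j)$, meaning a single node outside $M$ is adjacent to two nodes $\alpha_i,\alpha_j\in M$ — precisely what part 2 of sparseness forbids. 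Thus $\ell(w)=l\ge 2|M|$.

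The main obstacle is the case $|K_i|=1$: the bound $|K_i|\ge 2$ fails, and a second position must be manufactured from the prefix $s_{j_1}\cdots s_{j_{k_i-1}}$ using only that $\beta_{k_i}$ is a non-simple positive root whose support is connected, contains $\alpha_i$, and is constrained to simple roots already appearing in the prefix. Once that secondary index $m(i)$ is produced, part 2 of sparseness is what guarantees that secondary indices are not reused across different elements of $M$.
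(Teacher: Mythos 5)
Your proof is correct. It shares the overall skeleton of the paper's argument --- for each $i\in M$ one produces the reflection $s_i$ together with a second ``witness'', and the two halves of sparseness guarantee that the resulting $2\,\#M$ witnesses never collide --- but the mechanism producing the second witness is genuinely different. The paper argues that $w(\varpi_i)\prec\varpi_i-\alpha_i$ forces $w(\varpi_i)\preccurlyeq\varpi_i-\alpha_i-\alpha_{i'}$ for some node $i'$ adjacent to $i$, so that $s_{i'}$ must occur in every reduced word for $w$, and then counts \emph{distinct} simple reflections; this step tacitly relies on the fact that the set of simple roots appearing in $\varpi_i-w(\varpi_i)$ is connected and contains $i$. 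Your telescoping identity $\varpi_i-w(\varpi_i)=\sum_{k\in K_i}\beta_k$ over a fixed reduced word replaces that global statement by the (more elementary) connectivity of the support of a single positive root $\beta_{k_i}$, at the cost of the case split on $\#K_i$; correspondingly you count \emph{positions} rather than distinct letters, which is what lets the case $\#K_i\ge 2$ go through without producing any neighbour at all. The distinctness bookkeeping at the end is exactly where both proofs use the two clauses of sparseness, and you deploy them in the same way (clause 1 to separate secondary witnesses from $M$, clause 2 to separate secondary witnesses from each other). Your version is longer but more self-contained, since it does not presuppose the connectivity property of $\varpi_i-w(\varpi_i)$ that the paper leaves implicit.
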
\begin{proof}
Since $w(\varpi_i) \prec \varpi_i$, \ any reduced decomposition of $w$ contains $s_i$.
Furthermore, since $w(\varpi_i) \prec \varpi_i-\ap_i$, there exists a node $i'$ adjacent to 
$i$ such that $w(\varpi_i) \curle  \varpi_i-\ap_i-\ap_{i'}$. Therefore, $w$ must also contain the
reflection $s_{i'}$. Because $M$ is sparse, all the reflections $\{s_i,s_{i'}\mid i\in M\}$ are different.
Thus, $\ell(w)\ge 2{\cdot}\#(M)$.
\end{proof}

For any   $I\subset\Pi$, we consider the following objects. 
Let $P_I=L_IN_I$ be the standard parabolic subgroup of $G$.
Here $L_I$ is the Levi subgroup whose set of simple roots is $I$ and 
$N_I$ is the unipotent radical of $P_I$. Then $P_I^-=L_IN_I^-$ is the opposite parabolic
subgroup of $G$.
We also need the factorisation
\[
     W=W^I\times W_I ,
\]
where $W_I$ is the  subgroup generated by $\{s_i\mid \ap_i\in I\}$ and 
$W^I$ is the set of repre\-sen\-ta\-tives of {minimal} length for $W/W_I$ \cite[1.10]{hump}.
It is also true that  $W^I=\{w\in W\mid w(\ap_i)\in\Delta^+ \ \ \forall \ap_i\in I\}$ 
\cite[5.4]{hump}. If $I=\{\ap\in \Pi \mid (\ap,\lb)= 0\}$ for some $\lb\in \mathfrak X_+$, then
we write $P_\lb$, $W_\lb$, $W^\lb$, etc. 

For each $w\in W$,
we fix a representative, $\dot w$, in $N_G(T)$.
As is well-known, the $U$-orbits in $G/P_I^-$ can be parametrised by $W^I$, and letting
$\co(w)=U\dot wP_I^-\subset G/P_I^-$ ($w\in W^I$), we have  
$G/P_I^-=\sqcup_{w\in W^I}\co(w)$
and $\codim \co(w)=\ell(w)$.

\begin{thm}    \label{thm:eq-sparse2}
If $M\subset\{1,\dots,r\}$ is sparse, then the quotient 
$\pi_{U'}:  \gc(M)\to \gc(M)\md U'$ is equidimensional.
\end{thm}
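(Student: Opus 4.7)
\medskip\noindent\emph{Proof plan.} The strategy is to reduce equidimensionality of $\pi_{U'}$ to the codimension bound $\dim\fN_{U'}(M)\le\dim\gc(M)-2\#M$, and to establish this bound by a Bruhat-cell analysis on each $G$-orbit of $\gc(M)$, invoking Lemma~\ref{lm:dlina} to control Weyl-group lengths. The reduction uses that $\bbk[\gc(M)]^{U'}=\bbk[f_i,\tilde f_i:i\in M]$ is polynomial of Krull dimension $2\#M$ (Theorem~\ref{thm:some-fund}) and $\bbk[\gc(M)]$ is Cohen--Macaulay (since $\gc(M)$ is normal spherical and hence has rational singularities): therefore $\{f_i,\tilde f_i\}_{i\in M}$ is a regular sequence if and only if its common zero locus has codimension $2\#M$. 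Since $\gc(M)$ and $\gc(M)\md U'=\BA^{2\#M}$ are conical with the latter normal, Lemma~\ref{klod} further reduces this to verifying the bound on $\fN_{U'}(M)$.

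For the bound I would stratify via the $G$-orbit decomposition $\gc(M)=\bigsqcup_{J\subset M}G{\cdot}\bv_J$ of Theorem~\ref{thm:vp}. For $i\in M\setminus J$ both $f_i$ and $\tilde f_i$ vanish identically on $G{\cdot}\bv_J$ (the ambient summand $\sfr(\varpi_i^*)$ is zero there), so $\fN_{U'}(M)\cap G{\cdot}\bv_J$ is cut out by $\{f_i=\tilde f_i=0:i\in J\}$ alone. The natural projection $G{\cdot}\bv_J\to G/P_J^-$, $g{\cdot}\bv_J\mapsto gP_J^-$, is a $T_J$-torsor of fiber dimension $\#J$, and its Bruhat decomposition into $U$-orbits $\co(w)$ (with $w$ ranging over minimal-length coset representatives for $W/W_{L_J}$ and $\codim\co(w)=\ell(w)$) lifts to strata $X_w\subset G{\cdot}\bv_J$ of dimension $\dim G/P_J-\ell(w)+\#J$. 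On each $X_w$, for $x=u\dot wp^-\bv_J$ with $u\in U$ and $p^-\in P_J^-$, one should prove the trichotomy: (a) if $w\varpi_i=\varpi_i$, then $f_i(x)\ne0$; (b) if $w\varpi_i=\varpi_i-\ap_i$, then $f_i(x)=0$ while $\tilde f_i(x)\ne0$; (c) if $w\varpi_i\prec\varpi_i-\ap_i$, then $f_i(x)=\tilde f_i(x)=0$. Case (a) follows from $U$-invariance of $f_i$ and the weight-pairing observation that $\langle f_i,\dot wv_{-\varpi_i}\rangle\ne0$ exactly when $w\varpi_i=\varpi_i$; case (c) uses the analogous pairing for $\tilde f_i$ together with the transformation law $u^{-1}\tilde f_i=\tilde f_i-\xi_i(u)f_i$, where $\xi_i$ is the $\ap_i$-coordinate on $U/U'$. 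This law collapses the exponential $U$-action on $\tilde f_i$ to a single correction term, because $e_i\tilde f_i=f_i$ while $e_j\tilde f_i=0$ for $j\ne i$ (as $\varpi_i-\ap_i+\ap_j$ is not a weight of $\sfr(\varpi_i)$).

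Consequently $X_w\subset\fN_{U'}(M)$ precisely when case (c) holds for every $i\in J$, and in that event Lemma~\ref{lm:dlina} applied to the sparse subset $J\subset M$ gives $\ell(w)\ge2\#J$, whence $\dim X_w\le\dim G/P_J-\#J$. Since the Levi $L_J\supset L_M$ contains at least the $\#(M\setminus J)$ extra simple positive roots $\{\ap_i:i\in M\setminus J\}$ beyond those of $L_M$, we have $\dim G/P_J\le\dim G/P_M-\#(M\setminus J)$, and therefore $\dim X_w\le\dim G/P_M-\#M=\dim\gc(M)-2\#M$ uniformly in $J$ and $w$. Taking the maximum over $(J,w)$ yields the codimension bound on $\fN_{U'}(M)$ and finishes the proof. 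The main obstacle will be the trichotomy in the middle step, in particular the transformation law for the non-$U$-invariant function $\tilde f_i$; the crux is that the exponential action of $u\in U$ on $\tilde f_i$ truncates to the single correction term $-\xi_i(u)f_i$, a miracle made possible by the pair of relations $e_i\tilde f_i=f_i$ and $e_j\tilde f_i=0\,(j\ne i)$.
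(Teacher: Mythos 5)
Your proposal is correct and takes essentially the same approach as the paper: reduce to the codimension bound $\dim\fN_{U'}(M)\le\dim\gc(M)-2\#M$ via the Cohen--Macaulay/regular-sequence criterion, stratify by $G$-orbits $G{\cdot}\bv_J$ and Bruhat cells $\co(w)$, exploit that $f_i$ is $U$-invariant while $u^{-1}{\cdot}\tilde f_i=\tilde f_i+c\,f_i$, and apply Lemma~\ref{lm:dlina} to get $\ell(w)\ge 2\#J$. The only (cosmetic) difference is bookkeeping: the paper runs the count inside the single resolution $G\times_{P_I^-}V\to\gc(M)$ and then subtracts the fibre dimension of $\tau$ over non-generic $\bv_J$, whereas you fibre each orbit $G{\cdot}\bv_J$ over its own $G/P_J^-$ and use $\dim G/P_J^-\le\dim G/P_M^--\#(M\setminus J)$ --- the same inequality in disguise.
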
\begin{proof}
Set $m=\#M$ and $I=\Pi\setminus \{\ap_i\mid i\in M\}$.
Consider $\bv=\sum_{i\in M} v_{-\varpi_i}\in\bigoplus_{i\in M}\mathsf R(\varpi_i^*)$.
As explained in Subsection~\ref{subs:horosph}, 
then $\gc(M)\simeq \ov{G{\cdot}\bv}$ and
$\dim \gc(M)=\dim G/P_I^-+m$. We also have  $\dim \gc(M)\md U'=2m$. 
Therefore, our goal is to prove that $\dim\fN_{U'}(M) \le \dim G/P_I^--m$.

Set $V=\ov{T{\cdot}\bv}=\bigoplus_{i\in M} \bbk v_{-\varpi_i}$. It is an $m$-dimensional subspace of
$\bigoplus_{i\in M}\mathsf R(\varpi_i^*)$, which  is contained in $\gc(M)$ and is $P_I^-$-stable. 
Recall that $G\times_{P_I^-} V$ is a homogeneous vector bundle on $G/P_I^-$.
A typical element of it is denoted by $g\ast v$, where $g\in G$ and $v=\sum_{i\in M} v_i\in V$.
Our main tool for estimating $\dim\fN_{U'}(M)$ is the following diagram:
\[
\begin{array}{ccc} G\times_{P_I^-} V & \stackrel{\tau}{\longrightarrow} &
\gc(M) \\
\Big\downarrow\vcenter{%
\rlap{$\phi$}} & & \Big\downarrow\vcenter{%
\rlap{$\pi_{U'}$}} \\
G/P_I^- & & \gc(M)\md U'  \end{array}
\]
where $\phi(g\ast v):=gP_I^-$ and $\tau(g\ast v):=g{\cdot}v$. 
Note that $\fN_{U'}(M)$ is $B$-stable, and hence so is $\tau^{-1}(\fN_{U'}(M))$.
It is easily seen that the morphism $\tau$ 
is birational and therefore it is an equivariant resolution of singularities
of $\gc(M)$.

Let $n\in U$ and $w\in W^I$. As $\bbk[\gc(M)]^{U'}$ is generated by 
$\{f_i,\tilde f_i \mid i\in M\}$, we have
\beq    \label{eq:star}
   \phi^{-1}(n\dot wP_I^-)\cap \tau^{-1}(\fN_{U'}(M))=\{n\dot w\ast v \mid f_i(n\dot w{\cdot}v)=0,\ 
   \tilde f_i(n\dot w{\cdot}v)=0 \quad \forall i\in M\} .
\eeq
Here  $f_i$ (resp. $\tilde f_i$)
is regarded as the coordinate  of $v_{-\varpi_i}\in \mathsf R(\varpi_i^*)$ (resp. 
 $v_{-\varpi_i+\ap_i}\in \mathsf R(\varpi_i^*)$).  
Note that $f_i(n\dot w{\cdot}v)$ depends only on the component $v_i$ of $v$, and $v_i$ is 
proportional to  $v_{-\varpi_i}$. 
Let us simplify condition \eqref{eq:star}.
Since $f_i$ is actually a $U$-invariant, we have $f_i(n\dot w{\cdot}v_i)=f_i(\dot w{\cdot}v_i)$. Next, 
$\tilde f_i$ is invariant with respect to a subgroup of codimension 1 in $U$.
Namely, consider the decomposition 
$U=U^{\ap_i}U_{\ap_i}\simeq U^{\ap_i}\times U_{\ap_i}$, where 
$U_{\ap_i}$ is the root subgroup and $U^{\ap_i}$ is the unipotent radical of the minimal
parabolic subgroup associated with $\ap_i$.
If $n_i\in U_{\ap_i}$  and $\tilde n\in U^{\ap_i}$, 
then $\tilde n{\cdot}\tilde f_i=\tilde f_i$ and 
$n_i^{-1}{\cdot}\tilde f_i=\tilde f_i+c_if_i$ for some
$c_i=c_i(n_i)\in \bbk$. Hence for $n=\tilde nn_i\in U$, we have
\[
  \tilde f_i(n\dot w{\cdot}v_i)=\tilde f_i(n_i\dot w{\cdot}v_i)=(n_i^{-1}{\cdot}\tilde f_i)(\dot 
  w{\cdot}v_i)=\tilde f_i(\dot w{\cdot}v_i)+f_i(\dot w{\cdot}v_i)c_i \, .
\]  
Therefore, \eqref{eq:star}  reduces to the following:
\beq  \label{eq:star2}
   \phi^{-1}(n\dot wP_I^-)\cap \tau^{-1}(\fN_{U'}(M))=\{n\dot w\ast v \mid  f_i(\dot w{\cdot}v_i)=0,\ 
   \tilde f_i(\dot w{\cdot}v_i)=0 \quad \forall i\in M\} .
\eeq
Thus, the dimension of this intersection does not depend on $n\in U$;  it depends
only on $w\in W^I$, i.e., on $\co(w)\subset G/P_I^-$.
We can make \eqref{eq:star2} more precise by using the partition of
$\gc(M)$ into (finitely many) $G$-orbits. For any subset $J\subset M$, let  $\bv_J=
\sum_{i\in J}v_{-\varpi_i}\in V$. Then $\{\bv_J \mid J\subset M\}$ is a complete set of 
representatives of the $G$-orbits in $\gc(M)$ (Theorem~\ref{thm:vp}(2)). 
Set $\overset{\circ}{V}_J=G{\cdot}\bv_J\cap V=
T{\cdot}\bv_J$. It is an open subset of a ($\#J$)-dimensional vector space. 
Then
\[
   \phi^{-1}(n\dot wP_I^-)\cap \tau^{-1}(\fN_{U'}(M)\cap G{\cdot}\bv_J)=\{n\dot w\ast v \mid 
   v\in \overset{\circ}{V}_J,  \  f_i(\dot w{\cdot}v_i)=0,\ 
   \tilde f_i(\dot w{\cdot}v_i)=0 \ \forall i\in M\} .
\]
This set is non-empty if and only if   $\dot w{\cdot}v_{-\varpi_i}$ has the trivial projection to
$\langle v_{-\varpi_i}, v_{-\varpi_i+\ap_i}\rangle \subset \sfr(\varpi_i^*)$ for all $i\in J$, i.e.,
$w(\varpi_i)\prec \varpi_i-\ap_i$ for all $i\in J$.  In this case the dimension of
this set equals $\dim \overset{\circ}{V}_J=\#J$. 
Consequently, if $\phi^{-1}(\co(w))\cap \tau^{-1}(\fN_{U'}(M)\cap G{\cdot}\bv_J)\ne \varnothing$,
then 
\begin{gather*}
\text{$w(\varpi_i)\prec \varpi_i-\ap_i$ \ for all \ $i\in J$ and} 
\\
   \dim\Bigl(\phi^{-1}(\co(w))\cap \tau^{-1}(\fN_{U'}(M)\cap G{\cdot}\bv_J)\Bigr)=\#J+ \dim \co(w) .
\end{gather*}
By Lemma~\ref{lm:dlina}, $\ell(w)\ge 2{\cdot}\#J$. 
Therefore, 
\begin{multline*}
    \dim\Bigl(\phi^{-1}(\co(w))\cap \tau^{-1}(\fN_{U'}(M)\cap G{\cdot}\bv_J)\Bigr)= \\
    \# J - \codim \co(w)+ \dim G/P_I^-=\# J -\ell(w)+\dim G/P_I^- 
    \le \dim G/P_I^- -\#J .
\end{multline*}
This is an upper bound for the dimension of the pullback in $G\times_{P_I^-}V$
of  a subset of $\fN_{U'}(M)$. 
If $\bv_J$ is not generic, i.e., $J\ne M$, then $\dim \tau^{-1}(\bv_J)> 0$ and the actual
subset of $\fN_{U'}(M)$ has smaller dimension.
More precisely,  set $\tilde I=\{\ap_i \mid i\not\in J\}$. Then $\tilde I\supset I$ and
$\tau^{-1}(\bv_J)\simeq P_{\tilde I}^-/P_I^-$. Since $\srk(L_{\tilde I})=
\srk(L_I)+(m-\#J)$, we have $\dim \tau^{-1}(\bv_J)\ge m-\#J$.
Thus,  for all $w\in W^I$ and $J\subset M$, we have
\begin{multline*}
   \dim\Bigl(\tau\bigl(\phi^{-1}(\co(w))\bigr)\cap \fN_{U'}(M)\cap G{\cdot}\bv_J\Bigr) \le \\
     \dim G/P_I^- -\#J- (m-\#J)=\dim G/P_I^- -m ,
\end{multline*}
and therefore $\dim \fN_{U'}(M)\le \dim G/P_I^- -m$.
\end{proof}
\begin{rmk}
A ``dual''  approach is to consider the $P_I$-stable subspace 
$\tilde V=\bigoplus_{i\in M} \bbk v_{\varpi^*_i}
\subset \bigoplus_{i\in M}\mathsf R(\varpi_i^*)$ and the map
$G\times_{P_I}\tilde V\to \gc(M)$. Then one has to work with $U_-$-orbits in $G/P_I$
and $U_-$-invariants in $\bbk[\gc(M)]$, but all dimension estimates  remain the same. 
Such an approach is realised in \cite[Sect.\,5]{aura}, where
the equidimensionality problem is considered for the actions of $U$ 
on $\eus S$-varieties.
\end{rmk}

Combining Theorems~\ref{thm:S-var}, \ref{thm:eq-sparse1}, and \ref{thm:eq-sparse2}, 
we obtain the general criterion:

\begin{thm}   \label{thm:main-eq-S}
For a finitely generated monoid $\mathfrak S\subset \mathfrak X_+$, the following
conditions are equivalent:
\begin{itemize}
\item[\sf (i)] \   $\gc(\mathfrak S)\md U'$ is an affine space and 
$\pi_{\gc(\mathfrak S),U'}:  \gc(\mathfrak S)\to \gc(\mathfrak S)\md U'$ is equidimensional;
\item[\sf (ii)] \  $\mathfrak S$ is generated by a sparse set of fundamental weights.
\end{itemize}
\end{thm}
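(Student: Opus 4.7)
The plan is to obtain this criterion as a direct corollary of the three key results already in place: Theorem~\ref{thm:S-var}, Theorem~\ref{thm:eq-sparse1}, and Theorem~\ref{thm:eq-sparse2}. There is no new geometric content to introduce; the task is simply to verify that these three statements chain together cleanly into the stated equivalence.

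For the direction (ii)$\Rightarrow$(i), I would argue as follows. Assume $\mathfrak{S}$ is generated by a sparse set $\{\varpi_i \mid i \in M\}$ of fundamental weights. Since sparseness is a strengthening of ``generated by fundamental weights,'' Theorem~\ref{thm:some-fund} (which underlies the easy half of Theorem~\ref{thm:S-var}) applies and shows that $\bbk[\gc(\mathfrak{S})]^{U'}$ is a polynomial algebra freely generated by $\{f_i,\tilde f_i \mid i \in M\}$; hence $\gc(\mathfrak{S})\md U' \simeq \BA^{2\#M}$ is an affine space. Then Theorem~\ref{thm:eq-sparse2}, which was proved precisely under the sparseness hypothesis, yields equidimensionality of $\pi_{\gc(\mathfrak{S}),U'}$.

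For the direction (i)$\Rightarrow$(ii), assume $\gc(\mathfrak{S})\md U'$ is an affine space and $\pi_{\gc(\mathfrak{S}),U'}$ is equidimensional. The first condition alone, by the nontrivial direction of Theorem~\ref{thm:S-var}, forces $\mathfrak{S}$ to be generated by a set of fundamental weights, so $\mathfrak{S} = \sum_{i\in M} \BN\varpi_i$ for some $M \subset \{1,\dots,r\}$, i.e. $\gc(\mathfrak{S}) = \gc(M)$ in the notation of Section~\ref{sect:S}. Given this, the equidimensionality of $\pi_{U'}\colon\gc(M) \to \gc(M)\md U'$ is exactly the hypothesis of Theorem~\ref{thm:eq-sparse1}, whose conclusion is that $M$ is sparse. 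This gives (ii).

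No step in this argument is a real obstacle; the deep input is Theorem~\ref{thm:eq-sparse2}, whose proof uses the desingularisation $G\times_{P_I^-}V \to \gc(M)$ together with the Bruhat decomposition to bound $\dim \fN_{U'}(M)$. The only thing to be careful about is that the two hypotheses in (i) are genuinely used separately: the ``affine space'' condition is what converts the abstract monoid $\mathfrak{S}$ into a sparse-candidate set $M$ of fundamental weights, and equidimensionality is what then forces sparseness of $M$. Once this is noted, the equivalence follows by direct citation of the three theorems.
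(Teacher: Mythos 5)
Your proposal is correct and matches the paper exactly: the paper states this theorem as an immediate combination of Theorems~\ref{thm:S-var}, \ref{thm:eq-sparse1}, and \ref{thm:eq-sparse2}, with the same division of labour between the two hypotheses in (i) that you describe. Nothing further is needed.
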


\section{Equidimensional quotients by $U'$}    \label{sect:Eq}

\noindent
In this section, the quotient morphism for the $\eus S$-variety $\gc(\mathfrak S)$
will be denoted by $\pi_{\mathfrak S,U'}$. Similarly, for the \textsf{HV}-variety $\gc(\lb)$, 
we use notation $\pi_{\lb,U'}$. Our goal is to characterise the monoids  $\mathfrak S$ such 
that  $\pi_{\mathfrak S, U'}:  \gc(\mathfrak S)\to \gc(\mathfrak S)\md U'$ is equidimensional
(i.e., without assuming that $\gc(\mathfrak S)\md U'$ is an affine space). We assume 
that $U'\ne \{1\}$, i.e., $G$ is not a product of several $SL_2$.

First, we consider the case of {\sf HV}-varieties. 

\begin{thm}  \label{thm:eq-HV}
For any $\lb\in \mathfrak X_+$, the null-cone $\fN_{U'}(\gc(\lb))$ is of codimension 
$2$ in $\gc(\lb)$.
\end{thm}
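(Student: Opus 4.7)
I plan to prove Theorem~\ref{thm:eq-HV} by mimicking the Bruhat-type stratification used in Theorem~\ref{thm:eq-sparse2}. Set $V=\bbk v_{-\lb}$, viewed as a $1$-dimensional $P_\lb^-$-module, and consider the resolution $\tau: G\times_{P_\lb^-}V \to \gc(\lb)$, $g\ast v\mapsto g\cdot v$. This $\tau$ is birational, collapses the zero section $G/P_\lb^-\times\{0\}$ to the vertex $0\in\gc(\lb)$, and is an isomorphism on the complement. Bruhat-decompose $G/P_\lb^-=\bigsqcup_{w\in W^\lb}\co(w)$ with $\codim\co(w)=\ell(w)$. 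Since $\fN_{U'}(\gc(\lb))$ is $B$-stable, its intersection with $\gc(\lb)\setminus\{0\}$ is a union of $B$-orbits, each sitting over some $\co(w)$; I only need to identify which $w$ contribute.

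The selection proceeds as follows. Because $U'$ is normal in $U$ and acts trivially on $\sfr(\lb)^{U'}$, the group $U/U'$ acts on $\sfr(\lb)^{U'}$ by invertible linear operators. Hence for $n\in U$, $c\in\bbk^\times$ and $f\in\sfr(\lb)^{U'}$, the condition $f(n\dot w\cdot cv_{-\lb})=0$ reduces to $f(\dot w\cdot v_{-\lb})=0$. The vector $\dot w\cdot v_{-\lb}$ has $T$-weight $-w(\lb)$ in $\sfr(\lb^*)$, while the basis $\{f_\mu:\mu\in\eus I_\lb\}$ of $\sfr(\lb)^{U'}$ is a weight basis; therefore all $f_\mu$ vanish at $\dot w\cdot v_{-\lb}$ precisely when $w(\lb)\notin\eus I_\lb$. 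Combined with the fact that $\bbk[\gc(\lb)]^{U'}$ is generated by $\sfr(\lb)^{U'}$ (Theorem~\ref{thm:dim-HV}(ii)), this yields
\[
   \dim \fN_{U'}(\gc(\lb))=\max_{w\in \Sigma}\bigl(\dim \co(w)+1\bigr)=\dim G/P_\lb^- -\min_{w\in \Sigma}\ell(w)+1,
\]
where $\Sigma=\{w\in W^\lb\mid w(\lb)\notin \eus I_\lb\}$.

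It then remains to show $\min_{w\in\Sigma}\ell(w)=2$. For $w=e$, $w(\lb)=\lb\in\eus I_\lb$; for $w=s_i$, which lies in $W^\lb$ exactly when $i\in M:=\{j\mid a_j\neq 0\}$, one has $w(\lb)=\lb-a_i\ap_i\in\eus I_\lb$. So no element of length $\le 1$ lies in $\Sigma$. To produce one of length $2$ I split according to the Dynkin diagram: if some $i,j\in M$ are adjacent, take $w=s_is_j$, and compute $w(\lb)=\lb-(a_i-a_{ji}a_j)\ap_i-a_j\ap_j$ with $a_i-a_{ji}a_j>a_i$; if instead some $i\in M$ has an adjacent node $k\notin M$, take $w=s_ks_i$, verify $w\in W^\lb$ by direct evaluation of $s_ks_i(\ap_l)$ for $l\notin M$, and observe that the $\ap_k$-coefficient of $\lb-w(\lb)$ equals $-a_ia_{ik}>0=a_k$. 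Either way $w(\lb)\notin\eus I_\lb$, so $\min_{w\in\Sigma}\ell(w)=2$ and $\dim\fN_{U'}(\gc(\lb))=\dim\gc(\lb)-2$. The main hurdle is precisely this combinatorial verification that such a length-$2$ element exists in every relevant case, and in particular the check that $s_ks_i\in W^\lb$ in the second case.
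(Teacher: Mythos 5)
Your argument is correct and is essentially the paper's own proof: the same resolution $G\times_{P_\lb^-}\bbk v_{-\lb}\to\gc(\lb)$, the same Bruhat stratification of $G/P_\lb^-$, the same criterion "$w(\lb)\notin\eus I_\lb$" for a stratum to land in the null-cone, and the same length count (no $w$ of length $\le 1$ qualifies, while a product of two reflections $s_ks_i$ with $\ap_k$ adjacent to a supporting root $\ap_i$ does). The paper merely handles your two length-two cases uniformly by taking any simple root adjacent to one in the support of $\lb$, and it shares your implicit assumption that $\lb$ is supported on a simple factor of rank at least $2$.
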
\begin{proof}  
As in the proof of Theorem~\ref{thm:eq-sparse2}, we work with the diagram
\[
\begin{array}{ccc} G\times_{P_\lb^-} V & \stackrel{\tau}{\longrightarrow} &
\gc(\lb) \\
\Big\downarrow\vcenter{%
\rlap{$\phi$}} & & \Big\downarrow\vcenter{%
\rlap{$\pi_{\lb,U'}$}} \\
G/P_\lb^- & & \gc(\lb)\md U' , \end{array}
\]
where $V=\bbk v_{-\lb}$, $\phi(g\ast v):=gP_\lb^-$ and $\tau(g\ast v):=g{\cdot}v$.
Note that $P_\lb^-$ is just the stabiliser of the line $V\subset \sfr(\lb^*)$.
For simplicity, we write $\fN_{U'}(\lb)$ in place of $\fN_{U'}(\gc(\lb))$.

Since $\fN_{U'}(\lb)$ is $U$-stable, $\phi(\tau^{-1}(\fN_{U'}(\lb)))$ is a union of $U$-orbits. 
Recall that $\bbk[\gc(\lb)]^{U'}$ is generated by the space $\sfr(\lb)^{U'}$, and the 
corresponding set of $T$-weights is $\eus I_\lb$.

We point out a $w\in W^\lb$ such that the 
$U$-orbit $\co(w)\subset G/P_\lb^-$ is of codimension $2$
and $\phi^{-1}(\co(w))\subset \tau^{-1}(\fN_{U'}(\lb))$.  
Suppose that $(\lb,\ap_1^\vee)=a_1\ge 1$ and $\ap_1$ is a simple root of a simple 
component of $G$ of rank $\ge 2$.
Let $\ap_2$ be a simple root adjacent to $\ap_1$ in the Dynkin diagram. 
Take $w=s_2s_1$. Regardless of the value of $(\lb,\ap_2)$, it is true that
$w\in W^\lb$ and $\ell(w)=2$.  We have
\[
   s_2s_1(\lb)=\lb-a_1\ap_1-(a_2-a_1(\ap_1,\ap_2^\vee))\ap_2\curle 
   \lb-a_1\ap_1-(a_1+a_2)\ap_2, 
\]
where $a_2=(\lb,\ap_2^\vee)$. 
Hence $s_2s_1(\lb)\not\in \eus I_\lb$. It follows that  
$\dot{s_2}\dot{s_1}(v_{-\lb})\in \fN_{U'}(\lb)$
and 
\[
      \tau(\phi^{-1}(\co(w)))=U{\cdot}(\dot{s_2}\dot{s_1}(V))\in \fN_{U'}(\lb) .
\]
Thus, $w=s_2s_1$ is the required element.
Since $\tau$ is injective outside the zero section of $\phi$, it is still true that 
$\codim_{\gc(\lb)} \tau(\phi^{-1}(\co(w)))=2$.
This proves that  $\codim \fN_{U'}(\lb)\le 2$.

On the other hand, the  similar argument shows that if $w\in W^\lb$ and $\ell(w)=1$
(i.e., $w=s_i$, where $(\ap_i,\lb)\ne 0$), then $\dot w{\cdot}v_{-\lb}\not\in \fN_{U'}(\lb)$.
Therefore, $\codim \fN_{U'}(\lb)= 2$.
\end{proof}

\begin{cl}    \label{cl:eq-rk1}
Suppose that $U'\ne \{1\}$. Then
$\pi_{\lb,U'}: \gc(\lb) \to \gc(\lb)\md U'$ is equidimensional if and only if $\lb=a_i\varpi_i$
for some $i$. In particular, if the action of $G$ on $\gc(\lb)$ is effective and 
$\pi_{\lb,U'}$ is equidimensional, then $G$ is simple.
\end{cl}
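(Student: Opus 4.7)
The plan is to combine the dimension results already established: $\codim_{\gc(\lb)}\fN_{U'}(\gc(\lb)) = 2$ (Theorem~\ref{thm:eq-HV}) and $\dim \gc(\lb)\md U' = 1 + \#\{j : a_j \ne 0\}$ (Theorem~\ref{thm:dim-HV}(i)). For the forward implication, if $\pi_{\lb, U'}$ is equidimensional then the null-cone, as a fibre, has the generic fibre dimension, giving $\dim\gc(\lb) - 2 = \dim\gc(\lb) - \dim\gc(\lb)\md U'$ and so $\dim\gc(\lb)\md U' = 2$. Theorem~\ref{thm:dim-HV}(i) then forces $\#\{j : a_j \ne 0\} = 1$, whence $\lb = a_i \varpi_i$ for some~$i$.

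For the converse, suppose $\lb = a_i \varpi_i$, so $\dim \gc(\lb)\md U' = 2$. I would invoke Lemma~\ref{klod}, for which it suffices to show that every irreducible component of $\fN_{U'}(\gc(\lb))$ has dimension $\dim \gc(\lb) - 2$. The approach is to choose a homogeneous system of parameters $h_1, h_2$ of positive degree in the $2$-dimensional graded domain $\bbk[\gc(\lb)]^{U'}$. Their common zero locus on the conical variety $\gc(\lb)\md U'$ is $\bbk^\times$-stable and zero-dimensional, hence reduces to the vertex $y_0$; pulling back shows that the common zero locus of $h_1, h_2$ on $\gc(\lb)$ coincides set-theoretically with $\fN_{U'}(\gc(\lb))$, and each irreducible component of the latter is a component of the former. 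Krull's height theorem then bounds each such component in codimension by $2$, while the upper-bound half of the proof of Theorem~\ref{thm:eq-HV} (namely $\fN_{U'} \subset \tau(\bigcup_{\ell(w)\ge 2}\phi^{-1}(\co(w)))$) bounds it below by $2$. Hence $\fN_{U'}(\gc(\lb))$ is pure of codimension $2$, and Lemma~\ref{klod} delivers the desired equidimensionality of $\pi_{\lb, U'}$.

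For the final assertion, I assume $G$ acts on $\gc(\lb)$ effectively and $\pi_{\lb, U'}$ is equidimensional, so $\lb = a_i\varpi_i$ by the first part. The simple root $\ap_i$ lies in a unique simple factor $G_1$ of $G$; writing $G = G_1 \times G_2$, the $G$-module $\sfr(\lb)$ is trivial on the $G_2$-factor, so $G_2$ fixes every point of $\gc(\lb) \subset \sfr(\lb^*)$. Effectiveness then forces $G_2 = \{1\}$, and $G$ is simple. The main obstacle I anticipate is the pure-dimensionality of the null-cone in the converse direction; the system-of-parameters argument handles this cleanly in two lines and avoids having to revisit the Bruhat-stratum bookkeeping employed in the proof of Theorem~\ref{thm:eq-HV}.
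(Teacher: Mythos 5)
Your argument is correct and follows essentially the paper's own route: both implications rest on Theorem~\ref{thm:dim-HV}(i) combined with Theorem~\ref{thm:eq-HV} and Lemma~\ref{klod}. The only elaboration is your homogeneous-system-of-parameters argument for purity of the null-cone, a point the paper leaves implicit; it also follows at once from the standard lower bound $\dim_x \pi^{-1}(\pi(x)) \ge \dim X - \dim Y$ applied with $\dim \gc(\lb)\md U' = 2$ together with $\codim \fN_{U'}(\gc(\lb)) = 2$.
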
\begin{proof}
It follows from Theorem~\ref{thm:dim-HV}(i) that $\dim\gc(\lb)\md U'=2$
if and only if  $\lb=a_i\varpi_i$.
\end{proof}

Now, we turn to considering general monoids $\mathfrak S\subset \mathfrak X_+$.
For any $S\subset \mathfrak X$, let  $\mathsf{con}(S)$ denote the closed 
cone in $\mathfrak X_\BR$  generated by $S$.  

\begin{lm}    \label{lm:2-cones}
Suppose that we are given two monoids $\mathfrak S_1$ and $\mathfrak S_2$
such that $\mathsf{con}(\mathfrak S_1)=\mathsf{con}(\mathfrak S_2)$.
Then 
$\pi_{\mathfrak S_1,U'}$ is equidimensional if and only if $\pi_{\mathfrak S_2,U'}$ is.
\end{lm}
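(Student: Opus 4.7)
The strategy is to reduce to the case of a single pair $\mathfrak S_0\subset\mathfrak S$ with a common cone. First I would build a finitely generated monoid $\mathfrak S_0\subset\mathfrak S_1\cap\mathfrak S_2$ with $\mathsf{con}(\mathfrak S_0)=\mathsf{con}(\mathfrak S_1)$: each generator of $\mathfrak S_i$ lies in $\mathsf{con}(\mathfrak S_{3-i})$, so some positive integer multiple of it lies in $\mathfrak S_{3-i}$, and the monoid generated by such multiples of generators of both sides does the job. It then suffices to prove the pair version; applying it to $\mathfrak S_0\subset\mathfrak S_1$ and $\mathfrak S_0\subset\mathfrak S_2$ gives the lemma.

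For the pair case $\mathfrak S_0\subset\mathfrak S$ with common cone, the goal is a finite surjective commutative square between the two quotient morphisms. The inclusion $\bbk[\gc(\mathfrak S_0)]\hookrightarrow \bbk[\gc(\mathfrak S)]$ is integral: for any $\lb\in\mathfrak S$ choose $n\ge 1$ with $n\lb\in\mathfrak S_0$; then by the product rule of Theorem~\ref{thm:vp}(1), any $f\in\sfr(\lb)$ satisfies $f^n\in\sfr(n\lb)\subset\bbk[\gc(\mathfrak S_0)]$. Since powers of $U'$-invariants are $U'$-invariant, the same computation shows that $\bbk[\gc(\mathfrak S)]^{U'}$ is integral over $\bbk[\gc(\mathfrak S_0)]^{U'}$. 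This yields a commutative diagram
\[
\begin{array}{ccc}
\gc(\mathfrak S) & \stackrel{\pi_{\mathfrak S,U'}}{\longrightarrow} & \gc(\mathfrak S)\md U' \\
\Big\downarrow\vcenter{\rlap{$\psi$}} & & \Big\downarrow\vcenter{\rlap{$\psi'$}} \\
\gc(\mathfrak S_0) & \stackrel{\pi_{\mathfrak S_0,U'}}{\longrightarrow} & \gc(\mathfrak S_0)\md U'
\end{array}
\]
in which $\psi$ and $\psi'$ are finite surjective $G$-equivariant morphisms; in particular $\dim\gc(\mathfrak S)=\dim\gc(\mathfrak S_0)$ and $\dim\gc(\mathfrak S)\md U'=\dim\gc(\mathfrak S_0)\md U'$ (both can also be read off from Proposition~\ref{prop:dim-mod-U'}, since $L_\mathfrak S=L_{\mathfrak S_0}$), so the two quotient morphisms share a common ``generic fiber dimension'' $d$.

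The final step is a standard dimension-of-fibers comparison. Commutativity of the square and the fact that $\psi$ is finite surjective yield, for each $y'\in\gc(\mathfrak S_0)\md U'$, the identity $\dim\pi_{\mathfrak S_0,U'}^{-1}(y')=\dim\pi_{\mathfrak S,U'}^{-1}((\psi')^{-1}(y'))=\max_{y\in (\psi')^{-1}(y')}\dim\pi_{\mathfrak S,U'}^{-1}(y)$, where $(\psi')^{-1}(y')$ is a finite set. Since every fiber of either quotient morphism has dimension $\ge d$ and $\psi'$ is surjective, the condition ``every fiber has dimension exactly $d$'' transfers in both directions, giving the claimed equivalence. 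The point most likely to require care is the passage of integrality to $U'$-invariants, which would be delicate in general since $U'$ is non-reductive; here it works cleanly because $f^n$ lies in the single isotypic component $\sfr(n\lb)$ by the multigrading, and $U'$-invariance is preserved under taking powers.
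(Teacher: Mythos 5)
Your proof is correct and rests on the same skeleton as the paper's (a finite surjective commutative square relating the two quotient morphisms, from which equidimensionality transfers by comparing fibre dimensions), but the two technical choices inside that skeleton are genuinely different. First, you reduce to a common \emph{sub}-monoid $\mathfrak S_0\subset\mathfrak S_1\cap\mathfrak S_2$ built from integer multiples of the generators, whereas the paper reduces to the common \emph{over}-monoid $\mathsf{con}(\mathfrak S_1)\cap\mathfrak X_+$ (the saturation); both reductions work. Second, and more substantively, you establish finiteness of $\psi'$ by a direct integrality computation: $f^n\in\sfr(n\lb)$ via the Cartan multiplication $\sfr(\lb)\sfr(\mu)=\sfr(\lb+\mu)$ of Theorem~\ref{thm:vp}(1), applied componentwise to $U'$-invariants. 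The paper instead invokes the transfer principle $\bbk[X]^{U'}\simeq(\bbk[X]\otimes\bbk[G/U'])^G$ together with reductivity of $G$ to push finiteness of $\psi$ down to the invariant rings. Your argument is more elementary and self-contained, exploiting the special multiplicative structure of $\eus S$-varieties; the paper's argument is the one that generalises to an arbitrary finite $G$-morphism of affine $G$-varieties. One step you should make explicit: integrality of $\bbk[\gc(\mathfrak S)]^{U'}$ over $\bbk[\gc(\mathfrak S_0)]^{U'}$ gives module-finiteness only once you know the former is a finitely generated algebra; this is the Hadziev--Grosshans theorem (finite generation of $U'$-invariants of affine $G$-varieties), which the paper uses throughout, so the gap is only one of citation. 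Your closing fibre-dimension comparison is carried out more explicitly than in the paper, which leaves that step implicit; it is correct as written, since every nonempty fibre component automatically has dimension at least $\dim X-\dim Y$, so equidimensionality is equivalent to the upper bound you verify.
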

\begin{proof}
It suffices to treat the case in which $\mathfrak S_2=\mathsf{con}(\mathfrak S_1)\cap
\mathfrak X_+$. Then $\bbk[\gc(\mathfrak S_2)]$ is a finite 
$\bbk[\gc(\mathfrak S_1)]$-module \cite[Prop.\,4]{vp72}. 
Consider the commutative diagram
\[
\begin{array}{ccc} \gc(\mathfrak S_2) & \stackrel{\psi}{\longrightarrow} &
\gc(\mathfrak S_1) \\
\Big\downarrow\vcenter{%
\rlap{$\pi_{\mathfrak S_2,U'}$}} & & \Big\downarrow\vcenter{%
\rlap{$\pi_{\mathfrak S_1,U'}$}} \\
\gc(\mathfrak S_2)\md U'  &\stackrel{\psi\md U'}{\longrightarrow} & \gc(\mathfrak S_1)\md U' .\end{array}
\]
Here $\psi$ is finite, and it suffices to prove that $\psi\md U'$ is also finite, i.e.,
that $\bbk[\gc(\mathfrak S_2)]^{U'}$ is a finite 
$\bbk[\gc(\mathfrak S_1)]^{U'}$-module.
By the ``transfer principle'' 
(\cite[Ch.\,1]{br81}, \cite[\S\,3]{po86}),  we have
\[
   \bbk[X]^{U'}\simeq (\bbk[X]\otimes \bbk[G/U'])^G   
\]
for any affine $G$-variety $X$. Hence, one has to prove that
$(\bbk[\mathfrak S_2]\otimes \bbk[G/U'])^G$ is a finite 
$(\bbk[\mathfrak S_1]\otimes \bbk[G/U'])^G$-module, which readily follows from the fact that
$\bbk[G/U']$ is finitely generated and 
$G$ is reductive.
\end{proof}

\begin{thm}   \label{thm:main-EQ}
The quotient morphism $\pi_{\mathfrak S,U'}$ is equidimensional if and only if\/
$\mathsf{con}(\mathfrak S)$ is generated by a sparse set of fundamental weights.
\end{thm}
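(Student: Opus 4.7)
For the ``if'' direction, if $\mathsf{con}(\mathfrak S)$ is generated by a sparse set $\{\varpi_i:i\in M\}$ of fundamental weights, set $\mathfrak S'=\sum_{i\in M}\BN\varpi_i$; then $\mathsf{con}(\mathfrak S')=\mathsf{con}(\mathfrak S)$, Theorem~\ref{thm:main-eq-S} yields equidimensionality of $\pi_{\mathfrak S',U'}$, and Lemma~\ref{lm:2-cones} transfers this to $\pi_{\mathfrak S,U'}$.

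For the ``only if'' direction, assume $\pi_{\mathfrak S,U'}$ is equidimensional. Using Lemma~\ref{lm:2-cones}, I would first replace $\mathfrak S$ by $\sum_{\lb\in E}\BN\lb$, where $E$ is the set of primitive lattice vectors on the extreme rays of $\mathsf{con}(\mathfrak S)$. The goal is to show $E$ consists of fundamental weights forming a sparse set; once each $\lb\in E$ is fundamental, Theorem~\ref{thm:some-fund} makes $\bbk[\gc(\mathfrak S)]^{U'}$ polynomial, and Theorem~\ref{thm:main-eq-S} then yields sparsity of $E$.

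To establish fundamentality of each $\lb\in E$, I would adapt the Bruhat-decomposition argument from Theorem~\ref{thm:eq-sparse2}. Consider the equivariant resolution $\tau:G\times_{P_I^-}V\to\gc(\mathfrak S)$ with $V=\bigoplus_{\lb\in E}\bbk v_{-\lb}$ and $I=\{\ap\in\Pi:\ap\perp\lb\ \forall\lb\in E\}$. For each pair $(w,J)$ with $w\in W^I$ and $J\subseteq E$ satisfying $w(\lb)\notin\eus I_\lb$ for $\lb\in J$, the image in $\gc(\mathfrak S)$ of $\phi^{-1}(\co(w))\cap\tau^{-1}(G\cdot\bv_J)$ lies in $\fN_{U'}(\gc(\mathfrak S))$, and a fibre-dimension analysis (using $\tau^{-1}(\bv_J)\simeq P_{\tilde I_J}^-/P_I^-$ with $\tilde I_J=\{\ap\in\Pi:\ap\perp\lb\ \forall\lb\in J\}$) bounds its dimension from below by $\dim G/P_I^-+\#J-\ell(w)-\dim(P_{\tilde I_J}^-/P_I^-)$. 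Comparing with $\dim\fN_{U'}(\gc(\mathfrak S))=\dim G/P_I^--k$ (which follows from Proposition~\ref{prop:dim-mod-U'} and equidimensionality, with $k=\rk G-\srk L_\mathfrak S$), one obtains the necessary condition $\ell(w)+\dim(P_{\tilde I_J}^-/P_I^-)\ge\#J+k$ for every valid pair $(w,J)$.

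The hard part will be to show this inequality fails whenever some $\lb\in E$ has support on two or more simple roots. For such a non-fundamental $\lb$, the set $\eus I_\lb$ has bounded depth $(\lb,\ap^\vee)$ in each direction $\ap\in\mathrm{supp}(\lb)$, so a length-two Weyl element $w=s_bs_a$ with $\ap_a,\ap_b\in\mathrm{supp}(\lb)$ adjacent already satisfies $w(\lb)\notin\eus I_\lb$. In the simplest case $\mathrm{supp}(\lb)=\mathrm{supp}(\mathfrak S)$, taking $J=\{\lb\}$ yields $\tilde I_J=I$, fibre dimension zero, and $\ell(w)=2<1+k$ since $k\ge 2$, violating the bound. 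The general case requires extending $w$ and $J$ to simultaneously treat all extreme ray generators, and the combinatorial bookkeeping parallels that of Theorem~\ref{thm:eq-sparse1} while handling the richer posets $\eus I_\lb$ of non-fundamental $\lb$.
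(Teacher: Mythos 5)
Your ``if'' direction is exactly the paper's: combine Lemma~\ref{lm:2-cones} with Theorem~\ref{thm:eq-sparse2}. Your reduction framework for ``only if'' (saturate the monoid, show each primitive edge generator of $\mathsf{con}(\mathfrak S)$ is fundamental, then get sparseness from Theorem~\ref{thm:eq-sparse1}) is also the right skeleton. The gap is in the central step --- fundamentality of the edge generators --- which you only sketch. You explicitly defer ``the hard part'': the failure of your inequality $\ell(w)+\dim(P_{\tilde I_J}^-/P_I^-)\ge\#J+k$ is verified only when $\mathrm{supp}(\lb)=\mathrm{supp}(\mathfrak S)$ and $J=\{\lb\}$, and the general multi-generator case is left as ``combinatorial bookkeeping''. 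This is not routine: when $\mathrm{supp}(\lb)\subsetneq\mathrm{supp}(\mathfrak S)$ both the fibre term and $k$ grow, and it is not clear how to choose $(w,J)$. Moreover, your lower bound on $\dim\fN_{U'}(\gc(\mathfrak S))$ silently assumes that vanishing of the degree-one invariants $\bigoplus_{\lb\in E}\sfr(\lb)^{U'}$ at a point forces vanishing of \emph{all} positive-degree $U'$-invariants, i.e., that $\bbk[\gc(\mathfrak S)]^{U'}$ is generated in degree one. The paper proves such a statement only for \textsf{HV}-varieties (Theorem~\ref{thm:dim-HV}(ii)); for a general monoid with non-fundamental generators it would require a separate argument (nonvanishing of Cartan components of products of non-extreme weight vectors), and without it your candidate subsets need not lie in the null-cone.

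The paper's proof avoids all of this. For each edge $\mathsf{con}(\lb)$ it uses that $\gc(\lb)\hookrightarrow\gc(\mathfrak S)$ admits a $G$-equivariant retraction $r$ (coming from the graded subalgebra $\bigoplus_{n}\sfr(n\lb)$ of $\bbk[\gc(\mathfrak S)]$), picks an irreducible component $Y$ of $\pi_{\mathfrak S,U'}^{-1}(\pi_{\mathfrak S,U'}(\gc(\lb)))$ containing $\gc(\lb)$ and dominating $\pi_{\mathfrak S,U'}(\gc(\lb))$, and deduces from $r\vert_Y$ being onto that $\pi_{\lb,U'}$ is itself equidimensional. Then Theorem~\ref{thm:eq-HV} --- the null-cone of \emph{any} \textsf{HV}-variety has codimension exactly $2$ --- combined with $\dim\gc(\lb)\md U'=1+\#\{j\mid a_j\ne 0\}$ (Theorem~\ref{thm:dim-HV}(i)) immediately forces $\lb$ to be a multiple of a fundamental weight (Corollary~\ref{cl:eq-rk1}). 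If you want to keep your Bruhat-decomposition estimate, the cleanest repair is to run it only for the rank-one monoid $\BN\lb$ (this is essentially how Theorem~\ref{thm:eq-HV} is proved) and then supply a mechanism for transferring equidimensionality from $\gc(\mathfrak S)$ down to the $G$-stable subvariety $\gc(\lb)$ --- which is precisely the retraction argument your proposal is missing.
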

\begin{proof}
1) The ``if'' part readily follows from Lemma~\ref{lm:2-cones}  
and Theorem~\ref{thm:eq-sparse2}.

2) Suppose that $\pi_{\mathfrak S,U'}: \gc(\mathfrak S)\to \gc(\mathfrak S)\md U'$ is equidimensional.
By Lemma~\ref{lm:2-cones}, it  suffices to consider the case in which
$\mathfrak S=\mathsf{con}(\mathfrak S)\cap \mathfrak X_+$. 
Then $\gc(\mathfrak S)$ is normal (see Theorem~\ref{thm:vp}(3)).
Consider an arbitrary edge,
$\mathsf{con}(\lb)$, of $\mathsf{con}(\mathfrak S)$. It is assumed that
$\lb\in \mathfrak S$ is a primitive element of $\mathfrak X_+$.
By \cite[Prop.\,7]{vp72}, 
the \textsf{HV}-variety $\gc(\lb)$ is a subvariety of $\gc(\mathfrak S)$.
On the other hand, $\bbk[\gc(\lb)]=\bigoplus_{n\ge 0}\sfr(n\lb)$ is a $G$-stable
subalgebra
of $\bbk[\gc(\mathfrak S)]=\bigoplus_{\mu\in\mathfrak S}\sfr(\mu)$.
This yields the chain of $G$-equivariant maps
\[
       \gc(\lb)\hookrightarrow \gc(\mathfrak S) \stackrel{r}{\longrightarrow} \gc(\lb) .
\]
Here the composite map is the identity, i.e., $r$ is a $G$-equivariant retraction.  
Furthermore, passage to the 
subalgebras of $U'$-invariants (= quotient varieties) yields the maps
\[
      \gc(\lb)\md U'\hookrightarrow \gc(\mathfrak S)\md U' \stackrel{r\md U'}{\longrightarrow} \gc(\lb)\md U' ,
\]
which shows that $r\md U'$ is a retraction, too.  This also shows that both $r$ and $r\md U'$
are onto. Consider  the commutative diagram
\[
\xymatrix{
  \gc(\lb) \ar@{^{(}->}[r]  \ar[d]_{\pi_{\lb,U'}}    
 & \gc(\mathfrak S) \ar@{->>}[r]^r  \ar[d]_{\pi_{\mathfrak S,U'}}
 & \gc(\lb) \ar[d]_{\pi_{\lb,U'}} \\
 \gc(\lb)\md U' \ar@{^{(}->}[r]  & \gc(\mathfrak S)\md U' \ar@{->>}[r]^{r\md U'} & \gc(\lb)\md U'
}
\]
As $\gc(\mathfrak S)$ is normal, the same is true for $\gc(\mathfrak S)\md U'$. 
Since $\pi_{\mathfrak S,U'}$ is equidimensional and both $\gc(\mathfrak S)$ and
$\gc(\mathfrak S)\md U'$ are conical, it follows from Lemma~\ref{klod} that 
$\pi_{\mathfrak S,U'}$
is onto. Therefore, $\pi_{\lb,U'}$
is  onto as well. Furthermore,
$\pi_{\lb,U'}=\pi_{\mathfrak S,U'}\vert_{\gc(\lb)}$, since $\gc(\lb)$ is  a $G$-stable 
subvariety of $\gc(\mathfrak S)$.  This shows that $\pi_{\mathfrak S,U'}(\gc(\lb))$ is 
a closed subset of $ \gc(\mathfrak S)\md U'$.

Let $Y\subset \gc(\mathfrak S)$ be an irreducible component of 
$\pi_{\mathfrak S,U'}^{-1}(\pi_{\mathfrak S,U'}(\gc(\lb)))$ that contains $\gc(\lb)$ and
maps dominantly to $\pi_{\mathfrak S,U'}(\gc(\lb))$.
Consider  the commutative diagram
\[
\xymatrix{
Y\ar[rr]^{r\vert_Y}  \ar[dr]_{\pi_{\mathfrak S,U'}\vert_Y  } && \gc(\lb) \ar[dl]^{\pi_{\mathfrak S,U'}\vert_{\gc(\lb)}} \\
&  \pi_{\mathfrak S,U'}(\gc(\lb)) &   
}
\]
By the very construction of $Y$, the morphism $r\vert_Y$ is onto and  $\pi_{\mathfrak S,U'}\vert_Y$ is equidimensional. It follows that $\pi_{\mathfrak S,U'}\vert_{\gc(\lb)}$ is also
equidimensional. 
Consequently, $\pi_{\lb,U'}=\pi_{\mathfrak S,U'}\vert_{\gc(\lb)}$ is equidimensional and, by Corollary~\ref{cl:eq-rk1}, $\lb=\varpi_i$ for some $i$ (recall that $\lb$ is supposed to be primitive).
Thus, the edges of $\mathsf{con}(\mathfrak S)$ are generated by fundamental weights.
Finally, by Theorem~\ref{thm:eq-sparse1}, the corresponding set of fundamental weights is sparse. 
\end{proof}

\begin{rmk}
Our proof of the ``only if'' part exploits ideas of Vinberg and Wehlau for the equidimensional 
quotients by $G$ (see \cite[Theorem\,8.2]{t55} and \cite[Prop.\,2.6]{we93}).
\end{rmk}

\begin{rmk}
We can prove a general equidimensionality criterion for the quotients of $\eus S$-varieties 
by $U$. This topic will be considered in a forthcoming publication.
\end{rmk}

\section{The Hilbert--Mumford criterion for $U'$}    \label{sect:HM}

\noindent
Let $X$ be an irreducible affine $G$-variety  and $x_0\in X^G$.
For any $H\subset G$, define the {\it null-cone\/} with respect to $H$ and $x_0$ as 
\[
      \fN_{H}(X)=\{x\in X \mid  F(x)=F(x_0) \quad \forall F\in\bbk[X]^{H}\} .
\]
If $\bbk[X]^H$ is finitely generated, then $\fN_H(X)$ can be regarded as 
the fibre of $\pi_{X,H}$ containing $x_0$.
Below, we give a characterisation of $\fN_{U'}(X)$ via one-parameter subgroups
({\sf 1-PS} for short) of $T$. This is inspired by
Brion's description of null-cones for $U$-invariants  \cite[Sect.\,IV]{br83}.
Recall that the Hilbert--Mumford criterion for $G$ asserts that 

\centerline{
{\it $x\in \fN_G(X)$ if and only if there is a 1-PS $\tau: \bbk^\times \to G$
such that\/ $\lim_{t\to 0}\tau(t){\cdot}x=x_0$}  }
\noindent (cf. \cite[III.2]{kr84}, \cite[\S\,5.3]{t55}).
By \cite[Theorem\,2.2]{odno-sv}, there is the canonical affine model of the homogeneous 
space $G/U'$, that is, an affine pointed $G$-variety $(\ov{G/U'}, \bfp)$  such that
\begin{itemize}
\item   \ $G_\bfp=U'$;
\item  \  $G{\cdot}\bfp$ is dense in  $\ov{G/U'}$;
\item  \  $\bbk[\ov{G/U'}]=\bbk[G]^{U'}$.
\end{itemize}
Here $\bfp=(f_1,\tilde f_1,\dots, f_r,\tilde f_r)$ is a direct sum of weight vectors in 
$2\sfr(\varpi_1)\oplus \cdots \oplus 2\sfr(\varpi_r)$, 
with weights $\varpi_i, \varpi_i-\ap_i$ ($1\le i\le r$).
If $G$ has no simple factors $SL_2, SL_3$, then all these weights belong to an open 
half-space of $\mathfrak X_\BR$ (see the proof of \cite[Prop.\,1.9]{odno-sv}). In this case, 
$\bfp$ is unstable  and $\ov{G/U'}$ contains the origin in  
$2\sfr(\varpi_1)\oplus \cdots \oplus 2\sfr(\varpi_r)$.
Let $\tau: \bbk^\times \to T$ be a {\sf 1-PS}. Using the canonical pairing between 
$\mathfrak X$ and the set of
{\sf 1-PS} of $T$, we will regard $\tau$ as an element of $\mathfrak X_\BR$.
Let us say that $\tau$ is 
$U'$-{\it admissible\/}, if $(\tau, \varpi_i)>0$ and $(\tau, \varpi_i-\ap_i)>0$ for all $i$;
that is, if \ $\lim_{t\to 0}\tau(t){\cdot}\bfp=0$. Since $\bbk[\ov{G/U'}]=\bbk[G]^{U'}$,
one has the  isomorphism  
\beq   \label{eq:isom-U'}
\bbk[X\times \ov{G/U'}]^G= (\bbk[X]\otimes \bbk[G]^{U'})^G\isom \bbk[X]^{U'}
\eeq
that  takes $\tilde F(\cdot ,\cdot )\in \bbk[X\times \ov{G/U'}]^G$ to 
$F(\cdot)=\tilde F(\cdot ,\bfp)\in \bbk[X]^{U'}$.

\begin{thm}   \label{thm:1-ps}
Suppose that $G$ has no simple factors $SL_2, SL_3$. Then the following conditions are equivalent:
\begin{itemize}
\item[\sf (i)]  \   $x\in \fN_{U'}(X)$, i.e., $F(x)=F(x_0)$ for all $F\in\bbk[X]^{U'}$;
\item[\sf (ii)] \  there is $u\in U$ and a $U'$-admissible 1-PS 
$\tau: \bbk^\times \to T$ such that $\lim_{t\to 0}  \tau(t)u{\cdot}x=x_0$.
\end{itemize}
\end{thm}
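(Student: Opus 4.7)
My plan is to reduce to the standard Hilbert--Mumford criterion for $G$ via the transfer isomorphism \eqref{eq:isom-U'}, following the template of Brion's argument \cite[Sect.\,IV]{br83} for $U$-invariants. Let $V=2\sfr(\varpi_1)\oplus\cdots\oplus 2\sfr(\varpi_r)$ be the ambient vector space of $\ov{G/U'}$, so that $0\in V$ is a $G$-fixed origin. First I would verify the correspondence
\[
  x\in\fN_{U'}(X)\iff (x,\bfp)\in\fN_G(X\times\ov{G/U'}),
\]
with the relevant $G$-fixed point being $(x_0,0)$. The hypothesis excluding $SL_2,SL_3$ yields a $U'$-admissible 1-PS $\tau_0$, so $\lim_{t\to 0}\tau_0(t)(x_0,\bfp)=(x_0,0)$ and therefore $\tilde F(x_0,\bfp)=\tilde F(x_0,0)$ for every $G$-invariant $\tilde F$; combined with \eqref{eq:isom-U'} this gives the correspondence.

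For (ii)$\Rightarrow$(i), the key input from Theorem~\ref{thm1.6}(3) is that the $U$-orbit of $\bfp$ stays inside the span of $\{f_i,\tilde f_i\}$. Indeed $\tilde f_i$ is annihilated by every positive root vector except $e_i$: for any other positive root $\beta$, the weight $\varpi_i-\alpha_i+\beta$ is not a weight of $\sfr(\varpi_i)$ because $\alpha_i-\beta\notin\BN\Pi$. Hence $u\bfp\in\bfp+\text{span}_\bbk\{f_1,\dots,f_r\}$ has $T$-weight support in $\{\varpi_i,\varpi_i-\alpha_i\}$, so $U'$-admissibility of $\tau$ gives $\lim\tau(t)u\bfp=0$. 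Together with $\lim\tau(t)ux=x_0$ this yields $\lim\tau(t)u(x,\bfp)=(x_0,0)$, producing $(x,\bfp)\in\fN_G$, hence (i).

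For (i)$\Rightarrow$(ii), I would apply the standard Hilbert--Mumford criterion to $(x,\bfp)\in\fN_G$ to obtain $\lambda:\bbk^\times\to G$ with $\lim\lambda(t)(x,\bfp)=(x_0,0)$, then conjugate $\lambda=g\mu g^{-1}$ with $\mu$ a 1-PS of $T$ chosen strictly dominant, giving $\lim\mu(t)g^{-1}(x,\bfp)=(x_0,0)$. Decomposing via Bruhat $g^{-1}=t_0 u_1\dot w u_2$ with $t_0\in T$, $u_1,u_2\in U$, $w\in W$, a direct computation gives
\[
  \mu(t)g^{-1}(x,\bfp)=t_0\,\bigl(\mu(t)u_1\mu(t)^{-1}\bigr)\,\dot w\,\mu^w(t)\,u_2(x,\bfp),
\]
where $\mu^w=\dot w^{-1}\mu\dot w$. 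Strict dominance forces $\mu(t)u_1\mu(t)^{-1}\to 1$, and applying $(t_0\dot w)^{-1}$ (which fixes $(x_0,0)$) reduces to $\lim\mu^w(t)u_2(x,\bfp)=(x_0,0)$. Examining the second component forces $(\mu^w,\varpi_i)>0$ and $(\mu^w,\varpi_i-\alpha_i)>0$ for every $i$ (since every weight in $\{\varpi_i,\varpi_i-\alpha_i\}$ actually occurs in $u_2\bfp$), so $\tau:=\mu^w$ is $U'$-admissible; the first component then gives $\lim\tau(t)u_2 x=x_0$, yielding (ii) with $u=u_2$.

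The main obstacle is guaranteeing strict dominance of $\mu$. If the $G$-conjugacy class of $\lambda$ meets the open dominant Weyl chamber of $\mathfrak X^\vee_\BR$ this is immediate, but in general the natural $W$-representative may lie on a wall, in which case the contraction $\mu(t)u_1\mu(t)^{-1}\to 1$ fails for root vectors orthogonal to $\mu$. This is where Kempf's optimality theory (producing a canonical destabilizer with the right positivity, centralized by a Levi) or a small perturbation inside the parabolic $P(\lambda)$ becomes necessary, and this is the genuinely delicate technical step of the argument.
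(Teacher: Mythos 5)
Your reduction to $\fN_G(X\times\ov{G/U'})$ and your proof of (ii)$\Rightarrow$(i) are correct and essentially identical to the paper's: the key point, which you justify carefully via Theorem~\ref{thm1.6}, is that $u{\cdot}\bfp$ has the same $T$-weight support $\{\varpi_i,\varpi_i-\ap_i\}$ as $\bfp$, so $U'$-admissibility of $\tau$ forces $\tau(t)u{\cdot}\bfp\to 0$.

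The direction (i)$\Rightarrow$(ii) is where your argument is incomplete, as you yourself concede in the final paragraph. After producing a destabilizing 1-PS $\lambda$ for $(x,\bfp)$ and conjugating it into the closed dominant chamber, your Bruhat computation needs $\mu(t)u_1\mu(t)^{-1}\to 1$, which fails when $\mu$ lies on a wall; you flag the repair (Kempf's optimality theory or a perturbation) but do not carry it out, so as written this is a genuine gap. The paper sidesteps the issue entirely by invoking Grosshans \cite[Cor.\,1]{gro82} (see also \cite[IV.1]{br83}): the destabilizing 1-PS $\nu$ of $(x,\bfp)$ may be chosen with $\nu(\bbk^\times)\subset B$, whence some $u\in U$ conjugates it into $T$, giving $\tau(t)u{\cdot}(x,\bfp)=u\nu(t){\cdot}(x,\bfp)\to (x_0,0)$ with no regularity hypothesis at all; $U'$-admissibility of $\tau$ then follows because every weight $\varpi_i$, $\varpi_i-\ap_i$ genuinely occurs in $u{\cdot}\bfp$. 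Incidentally, your own computation can be salvaged more cheaply than you fear: for $\mu$ merely dominant, $\mu(t)u_1\mu(t)^{-1}$ still converges \emph{in} $U$ as $t\to 0$ (each root-group coordinate is scaled by $t^{\langle\mu,\beta\rangle}$ with $\langle\mu,\beta\rangle\ge 0$), so its limit $u_1'$ is invertible and can be divided out of the identity $\lim_t\mu(t)g^{-1}(x,\bfp)=(x_0,0)$, after which the rest of your argument goes through with $\tau=\mu^w$ and $u=u_2$. Either way, the step must actually be completed before the proof is valid.
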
\begin{proof}
$\mathsf{(i)} \Rightarrow \mathsf{(ii)}$.
Suppose that $x\in \fN_{U'}(X)$. Then 
$\tilde F(x ,\bfp)=F(x)=F(x_0)=\tilde F(x_0 ,\bfp)$. Since $\bfp$ is unstable in $\ov{G/U'}$,
we have $\tilde F(x_0 ,\bfp)=\tilde F(x_0 ,0)$. Thus, $\tilde F(x ,\bfp)=\tilde F(x_0 ,0)$ for
all $\tilde F\in (\bbk[X]\otimes \bbk[G]^{U'})^G$, i.e.,
$(x,\bfp)\in \fN_G(X\times \ov{G/U'})$.
By the Hilbert--Mumford criterion for $G$, there is a {\sf 1-PS}  
$\nu:\bbk^\times \to G$ such that  $\nu(t){\cdot}(x,\bfp)\underset{t\to 0}{\longrightarrow}(x_0,0)$.

By a result of Grosshans \cite[Cor.\,1]{gro82} (see also \cite[IV.1]{br83}), 
we may assume that $\nu(\bbk^\times)\subset B$.
Then there is $u\in U$ such that  $\tau(t):=u\nu(t)u^{-1} \in T$.
Therefore,
\[
     \tau(t)u{\cdot}(x,\bfp)  \underset{t\to 0}{\longrightarrow}(x_0,0) .
\] 
Note that $u{\cdot}\bfp$ ($u\in U$) does not differ much from $\bfp$.
Namely, each component $f_i$ remains intact, whereas $\tilde f_i$ is replaced with
$\tilde f_i + c_if_i$ for some $c_i\in \bbk$.  This means that 
$\tau(t)u{\cdot}\bfp\underset{t\to 0}{\longrightarrow} 0$ if and only if
$\tau(t){\cdot}\bfp\underset{t\to 0}{\longrightarrow} 0$. That is, $\tau$ is actually $U'$-admissible
and $\lim_{t\to 0}  \tau(t)u{\cdot}x=x_0$.

\noindent 
$\mathsf{(ii)} \Rightarrow \mathsf{(i)}$. \ 
Suppose that $F\in \bbk[X]^{U'}$ and $\tilde F$ is the corresponding $G$-invariant in 
$\bbk[X\times \ov{G/U'}]$. Then   
$F(x)=\tilde F(x,\bfp)=\tilde F(\tau(t)u{\cdot}x,\tau(t)u{\cdot}\bfp)$.  
Since $u{\cdot}\bfp$ is  a linear 
combination of weight vectors with the same weights and $\tau$ is $U'$-admissible, we 
have  $\lim_{t\to 0}\tau(t)u{\cdot}\bfp=0$.  Hence 
$F(x)=\tilde F(x_0, 0)=\tilde F(x_0,\bfp)=F(x_0)$.
\end{proof}

\begin{rmk}
Our Theorem~\ref{thm:1-ps} is  similar to Theorem~5 in \cite{br83} on null-cones
for $U$-invariants. The only difference is that we end up with a smaller class of 
admissible {\sf 1-PS}.
\end{rmk}

Obviously, there are inclusions $ \fN_{U'}(X)\subset \fN_U(X)\subset \fN_G(X)$ and hence
\[
   G{\cdot}\fN_{U'}(X)\subset G{\cdot}\fN_U(X)\subset \fN_G(X) .
\]
It is proved in \cite[Th\'eor\`eme\,6(ii)]{br83} that actually $G{\cdot}\fN_U(X)=\fN_G(X)$. 
Below, we investigate the similar problem  for $U'$.

Recall that $\mathsf{con}(S)$ is the closed 
cone in $\mathfrak X_\BR$  generated by $S$.  If $K\subset \mathfrak X_\BR$ is a closed cone, then
$K^\perp$ denotes the dual cone and $K^o$ denotes the relative interior of $K$.
By the very definition, the cone generated by the $U'$-admissible {\sf 1-PS} is open, and
its closure is dual to $\mathsf{con}(\{\varpi_i, \varpi_i-\ap_i \mid  i=1,\dots, r\})$.
By \cite[Theorem\,4.2]{odno-sv}, we have 
\[
   \mathsf{con}(\{\varpi_i, \varpi_i-\ap_i \mid  i=1,\dots, r\})^\perp={\mathsf{con}}(\Delta^+\setminus\Pi).
\]
Hence the cone generated by the $U'$-admissible {\sf 1-PS} equals
$\mathsf{con}(\Delta^+\setminus\Pi)^o$.

\begin{thm}  \label{thm:sovpad-null}
Suppose that $G$ has no simple factors of type $SL$. Then 

1)  \ $\mathsf{con}(\varpi_1,\dots,\varpi_r)\subset \mathsf{con}(\Delta^+\setminus\Pi)$,

2) \ $G{\cdot}\fN_{U'}(X)=\fN_G(X)$ for all affine $G$-varieties $X$.
\end{thm}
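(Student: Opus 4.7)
The plan is to dispatch the two parts separately: part (1) by a type-by-type check, and part (2) as a consequence of (1) combined with Brion's result and the Hilbert--Mumford criterion for $U'$ (Theorem~\ref{thm:1-ps}).

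For part (1), I would verify the inclusion $\varpi_i \in \mathsf{con}(\Delta^+ \setminus \Pi)$ on each simple factor of $G$, which under the hypothesis has type $B_n$, $C_n$, $D_n$, $E_6$, $E_7$, $E_8$, $F_4$, or $G_2$. In the classical types the standard $\epsilon$-realisation makes this almost immediate: in $B_n$ with $n \ge 2$, for $i < n$ one has $\varpi_i = \epsilon_1 + \cdots + \epsilon_i$, a sum of non-simple positive roots $\epsilon_j$, while $\varpi_n = \tfrac{1}{2}(\epsilon_1 + \cdots + \epsilon_n)$ is obtained by pairing the $\epsilon_j$'s into sums $\epsilon_j + \epsilon_k$ (also non-simple positive roots), with a mild rearrangement when $n$ is odd. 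Types $C_n$ and $D_n$ are analogous, and the five exceptional types each reduce to a short finite computation. I do not expect a uniform conceptual argument; the type-$A$ failure already visible in $A_2$ (where $\mathsf{con}(\Delta^+ \setminus \Pi) = \BR^+(\ap_1+\ap_2)$ is one-dimensional and so cannot contain $\varpi_1$) is what makes the exclusion of $SL$-factors essential.

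For part (2), I would combine Brion's theorem $\fN_G(X) = G \cdot \fN_U(X)$ \cite[Th\'eor\`eme\,6(ii)]{br83} with (1), so that it suffices to show $\fN_U(X) \subset G \cdot \fN_{U'}(X)$. Given $x \in \fN_U(X)$ with $x \ne x_0$, Brion's Hilbert--Mumford criterion for $U$-invariants supplies $u \in U$ and a 1-PS $\tau$ of $T$ with $(\tau, \varpi_i)>0$ for all $i$ and $\tau(t)ux \to x_0$. Both conditions cut out open subcones of $\mathfrak X^*_\BR$, so I may perturb $\tau$ off every Weyl hyperplane while preserving them. I then choose $w \in W$ carrying $\tau$ into the open dominant chamber $\mathsf{con}(\varpi_1,\dots,\varpi_r)^o$; by part~(1), $w\tau$ lies in $\mathsf{con}(\Delta^+\setminus\Pi)^o$, i.e., $w\tau$ is $U'$-admissible.

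Fixing a lift $\dot w \in N_G(T)$ and applying it to $\tau(t) u x \to x_0$ gives
\[
  (w\tau)(t)\,(\dot w u \dot w^{-1})\,(\dot w x) \longrightarrow x_0 .
\]
Using the standard factorisation $\dot w U \dot w^{-1} = (\dot w U \dot w^{-1}\cap U)\cdot(\dot w U \dot w^{-1}\cap U^-)$, I write $\dot w u \dot w^{-1} = v_+ v_-$ with $v_+ \in U$ and $v_- \in U^-$, and set $y := v_-\dot w x \in G \cdot x$. The relation becomes $(w\tau)(t) v_+ y \to x_0$ with $v_+ \in U$ and $w\tau$ $U'$-admissible, so Theorem~\ref{thm:1-ps} yields $y \in \fN_{U'}(X)$, whence $x \in G \cdot y \subset G \cdot \fN_{U'}(X)$. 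The main obstacle will be part~(1), since there appears to be no uniform shortcut; once it is in hand, part~(2) is a relatively formal Weyl-shift argument, the only delicate ingredient being the perturbation that ensures $w\tau$ lies in the \emph{strict} interior of the dominant chamber rather than on a boundary face.
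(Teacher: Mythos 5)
Your proposal is correct; both parts would go through. But your route differs from the paper's in two technical choices worth noting. For part (1), you argue on the primal side, writing each $\varpi_i$ directly as a non-negative combination of non-simple positive roots via the $\epsilon$-realisation; the paper instead dualises, using $\mathsf{con}(\{\varpi_i,\varpi_i-\ap_i\})^\perp=\mathsf{con}(\Delta^+\setminus\Pi)$ to reduce the whole statement to the single condition that $\varpi_i-\ap_i\succcurlyeq 0$, i.e.\ that the diagonal entries of the inverse Cartan matrix are $\ge 1$ — a one-line table lookup that handles all non-$SL$ types uniformly, whereas your version needs the case-by-case pairing tricks for the (half-)spin weights and separate computations in the exceptional types. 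For part (2), the paper starts from the Hilbert--Mumford criterion for $G$, which produces $g\in G$ and $\tau$ into $T$; after perturbing $\tau$ to be regular and conjugating it into the open dominant chamber, Theorem~\ref{thm:1-ps} applies with $u=1$ and one is done. You instead route through Brion's $\fN_G(X)=G{\cdot}\fN_U(X)$ and his criterion for $U$, which pins $u$ inside $U$ and therefore forces you to repair $\dot w u\dot w^{-1}\notin U$ via the factorisation $\dot w U\dot w^{-1}=(\dot w U\dot w^{-1}\cap U)(\dot w U\dot w^{-1}\cap U^-)$; this is a correct and standard step, but it is extra machinery that the paper's choice of starting point makes unnecessary. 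The key common ingredient — every regular 1-PS of $T$ is $W$-conjugate into $\mathsf{con}(\varpi_1,\dots,\varpi_r)^o\subset\mathsf{con}(\Delta^+\setminus\Pi)^o$, hence to a $U'$-admissible one — is identical in both arguments, and your interior-of-cones and perturbation steps are sound.
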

\begin{proof}
1)  Taking the dual cones yields the equivalent condition that
\[
    \mathsf{con}(\{\varpi_i, \varpi_i-\ap_i \mid  i=1,\dots, r\})\subset \mathsf{con}(\Delta^+) .
\]
That is, one has to verify that each $\varpi_i-\ap_i$ has non-negative coefficients 
in the expression via the simple roots. 
Let $C$ denote the Cartan matrix of a simple group $G$.  
All the entries of $C^{-1}$ are positive and the rows of $C^{-1}$  provide the expressions
of the fundamental weights via the simple roots.  Hence it remains to check that 
the diagonal entries of 
$C^{-1}$ are $\ge 1$. An explicit verification shows that this is true if
$G\ne SL_{r+1}$.
(The matrices $C^{-1}$ can be found  in \cite[Table\,2]{vo}.)
 
2)  Suppose that $x\in\fN_G(X)$. Then there exist $g\in G$ and $\tau: \bbk^\times \to T$ such that
$\lim_{t\to 0} \tau(t)g{\cdot}x=x_0$. Let $y=g{\cdot}x$.  The set of all {\sf 1-PS} \
$\nu:\bbk^\times \to T$ such that $\lim_{t\to 0} \nu(t){\cdot}y=x_0$ generates an open 
cone in $\mathfrak X_\BR$.  Therefore, we may assume that $\tau$ is a regular 
{1-PS}. Now,
in view of the Hilbert--Mumford criterion for $G$ and Theorem~\ref{thm:1-ps}, 
it suffices to prove that any regular {\sf 1-PS} 
of $T$ is $W$-conjugate
to a $U'$-admissible one. This follows from part 1), since 
$\mathsf{con}(\varpi_1,\dots,\varpi_r)$ is a fundamental domain for the $W$-action on
$\mathfrak X_\BR$ and 
$\mathsf{con}(\varpi_1,\dots,\varpi_r)^o\subset \mathsf{con}(\Delta^+\setminus\Pi)^o$.
\end{proof}

\noindent
For $G=SL_{r+1}$, we have $\varpi_1-\ap_1, \varpi_r-\ap_r \not\in \mathsf{con}(\Delta^+)$
and therefore, 
$\mathsf{con}(\varpi_1,\dots,\varpi_r)\not\subset \mathsf{con}(\Delta^+\setminus\Pi)$.
More precisely, \ $\varpi_1,\varpi_r \not\subset \mathsf{con}(\Delta^+\setminus\Pi)$.
This means that one may expect that, for {\sl some\/} $SL_{r+1}$-varieties, there is the 
strict inclusion $G{\cdot}\fN_{U'}(X)\subsetneqq \fN_G(X)$.

\begin{ex}
For $m\ge 3$, consider the representation of $G=SL_3$ in the space $V=\sfr(m\varpi_1)$ 
of  forms of  degree $m$ in three variables $x,y,z$. By Theorem~\ref{thm1.6}, 
$\dim V^{U'}=m+1$. Let $U$ be the subgroup of the unipotent 
upper-triangular  matrices in the basis  dual to $(x,y,z)$. 
The $U'$-invariants of degree 1 are the coefficients of $x^m, x^{m-1}y,\dots, xy^{m-1}, y^m$.
Therefore,  $\fN_{U'}(V)$ is contained in the subspace
of forms having the linear factor $z$ and all the forms in 
$SL_3{\cdot}\fN_{U'}(V)$ have a linear factor. On the other hand, the null-form (with respect 
to $SL_3$) $x^m+y^{m-1}z$ is irreducible. Hence, 
$SL_3{\cdot}\fN_{U'}(V)\ne \fN_{SL_3}(V)$.
\end{ex}

\begin{rema}
In view of Theorem~\ref{thm:1-ps}, it would be much more instructive to have 
such an example for $SL_n$, $n\ge 4$. However, we are unable to provide it yet.
\end{rema}

\section{Equidimensional quotients and irreducible representations of simple groups}    
\label{sect:classif}

\noindent
In this section, we  transform the criterion of Theorem~\ref{thm:main-eq-S} in a
sufficient condition applicable to a wider class of $G$-varieties. Then  we obtain the list of irreducible representations $V$ of simple algebraic groups $G\ne SL_2$ such that  $\bbk[V]$ is a free $\bbk[V]^{U'}$-module.

For any affine irreducible 
$G$-variety $Z$, there is a flat degeneration  $\bbk[Z] \leadsto \gr(\bbk[Z])$.
(Brion attributes this to  Domingo~Luna in his thesis, see \cite[Lemma\,1.5]{br81}). 
Here $\gr(\bbk[Z])$
is  again a finitely generated $\bbk$-algebra and a locally-finite $G$-module, and  
$\gr Z:=\spe(\gr(\bbk[Z]))$ is an affine horospherical 
$G$-variety. The whole theory of ``contractions of actions of reductive groups'' 
is later developed in \cite{po86}. (See also 
\cite{br86}, \cite{vin86}, \cite{pa97} for related results and other applications.)
The ``contraction'' $Z\leadsto \gr Z$ has the property that the algebras $\bbk[Z]$ and
$\bbk[\gr Z]=\gr(\bbk[Z])$ are isomorphic as  $G$-modules.
But the multiplication in $\bbk[\gr Z]$ is simpler than that in $\bbk[Z]$;
namely, if $M$ and $N$ are two simple $G$-modules in $\bbk[\gr Z]$, then
$M{\cdot}N$ (the product in $\bbk[\gr Z]$) is again a simple $G$-module.
Furthermore, $\bbk[\gr Z]^U\simeq \bbk[Z]^U$ and $G{\cdot}((\gr Z)^U)=\gr Z$.
This means that if $Z$ is a spherical $G$-variety, then $\gr Z$ is an $\eus S$-variety.

\begin{thm}    \label{thm:deformation}
Suppose that $G$ acts on  an irreducible affine variety $X$ such that 
(1) \ $\bbk[X]^{U}$ is a polynomial
algebra and  (2) \ the weights of free generators are fundamental, different and form a sparse set.
Then $\bbk[X]^{U'}$ is also polynomial, of Krull dimension $2\dim X\md U$,
and the quotient $\pi_{X,U'}: X\to X\md U'$ is equidimensional.   
\end{thm}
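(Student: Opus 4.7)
The plan is to apply the contraction construction $X \leadsto \gr X$ recalled just before the theorem, reducing Theorem~\ref{thm:deformation} to Theorem~\ref{thm:main-eq-S} for an associated $\eus S$-variety. The hypothesis that $\bbk[X]^U$ is polynomial with free generators of pairwise distinct, sparse fundamental weights $\{\varpi_i : i \in M\}$ forces $\bbk[X] \simeq \bigoplus_{\mu \in \mathfrak S} \sfr(\mu)$ as $G$-modules, where $\mathfrak S = \sum_{i \in M} \BN \varpi_i$; in particular $X$ is spherical and $\gr X$ is exactly the $\eus S$-variety $\gc(M) := \gc(\mathfrak S)$. Applying Theorem~\ref{thm:main-eq-S} to $\gc(M)$ yields that $\bbk[\gc(M)]^{U'}$ is a polynomial algebra of Krull dimension $2m$, where $m = \#M = \dim X \md U$, freely generated by a basis $\{f_i, \tilde f_i : i \in M\}$ of $\bigoplus_{i \in M} \sfr(\varpi_i)^{U'}$, and moreover $\pi_{\gc(M), U'}$ is equidimensional with $\dim \fN_{U'}(\gc(M)) = \dim X - 2m$.

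Next I would transfer these properties to $X$ via a Rees-type flat family $\ca \to \BA^1$ underlying the contraction: $\ca$ is $G \times \bbk^\times$-equivariant with $\ca_0 = \gc(M)$ and $\ca_t \simeq X$ for $t \ne 0$, and $\bbk[\ca]$ is $G$-isomorphic to $\bbk[X] \otimes \bbk[t]$ and flat over $\bbk[t]$. The $G$-module identification $\bbk[X] \simeq \bbk[\gc(M)]$ singles out weight vectors $F_i, \tilde F_i \in \bbk[X]^{U'}$ of weights $\varpi_i$ and $\varpi_i - \ap_i$, mirroring $f_i, \tilde f_i$. The $\bbk[t]$-subalgebra $\mathcal A \subset \bbk[\ca]^{U'}$ generated by the natural lifts of these $2m$ elements reduces modulo $t$ to $\bbk[\gc(M)]^{U'}$, which is polynomial of Krull dimension $2m$; flatness then forces $\mathcal A$ to be a polynomial $\bbk[t]$-algebra in $F_i, \tilde F_i$, and specialising at $t = 1$ yields a polynomial subalgebra $A \subset \bbk[X]^{U'}$ of Krull dimension $2m$. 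Since the transfer isomorphism \eqref{eq:isom-U'} depends only on the $G$-module structure of $\bbk[X]$, we have $\bbk[X]^{U'} \simeq \bbk[\gc(M)]^{U'}$ as $T$-modules; comparing $T$-characters forces $A = \bbk[X]^{U'}$, so $\bbk[X]^{U'}$ is polynomial of Krull dimension $2m = 2 \dim X \md U$.

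For the equidimensionality of $\pi_{X, U'}$ I would exploit the $\bbk^\times$-conical structure on the full family rather than on $X$ directly. The Rees grading makes $\ca$ a conical affine variety, and $\ca \md U' = \spe(\bbk[t] \otimes \bbk[X]^{U'}) \simeq \BA^{2m+1}$ is also conical; the total quotient morphism $\Pi := \pi_{\ca, U'}: \ca \to \ca \md U'$ is dominant and $\bbk^\times$-equivariant. Its fibre over the vertex of $\ca \md U'$ is $\fN_{U'}(\gc(M)) \subset \ca_0$, of dimension $\dim X - 2m = \dim \ca - \dim \ca \md U'$, so $\Pi$ is equidimensional at the vertex. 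Lemma~\ref{klod} then gives that $\Pi$ is equidimensional everywhere on $\ca \md U'$, and restricting to any slice $t = t_0 \ne 0$ yields the equidimensionality of $\pi_{X, U'}$. The main obstacle I anticipate is verifying that the Rees family genuinely provides the conical $\bbk^\times$-equivariant setup required by Lemma~\ref{klod} (in particular that $\bbk[\ca]_0 = \bbk$); once this is in place, the theorem follows cleanly.
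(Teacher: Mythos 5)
Your proposal is correct, and its skeleton --- contract $X$ to the $\eus S$-variety $\gc(M)$, apply Theorem~\ref{thm:main-eq-S} there, and transfer back through Popov's one-parameter (Rees) family $\ca=Y$ --- is exactly the paper's. You diverge only in how the two transfer steps are executed, and in both cases your route is somewhat more self-contained. First, to see that the $2m$ elements $F_i,\tilde F_i$ exhaust $\bbk[X]^{U'}$, the paper quotes \cite[Theorem\,2.4]{odno-sv} for generation and then pins down the Krull dimension via generic stabilisers ($\mathsf{g.s.}(U{:}X)=\mathsf{g.s.}(U{:}\gr X)$ together with \cite[Lemma\,2.5]{odno-sv}); you instead compare $T$-characters through the transfer isomorphism. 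That works, but be aware it silently uses that every $T$-weight space of $\bigoplus_{\lb\in\Gamma}\sfr(\lb)^{U'}$ is finite-dimensional --- true here because sparseness forces every Dynkin neighbour of a node of $M$ to lie outside $M$, but degenerate if $M$ meets an $SL_2$ factor (a case that is trivial anyway, since $U'$ then acts trivially on that factor). Second, for equidimensionality the paper invokes Kraft's rational-singularities theorem for $Y$, deduces that $\bbk[Y]$ is Cohen--Macaulay and hence a flat $\bbk[Y]^{U'}$-module, and localises at $q$; you apply Lemma~\ref{klod} directly to the total quotient $\Pi=\pi_{\ca,U'}$ (using that its fibre over the vertex is $\fN_{U'}(\gr X)$, of the right dimension by Theorem~\ref{thm:main-eq-S}) and then restrict to the slice $q=t_0$. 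This bypasses the rational-singularities input entirely and suffices for the stated conclusion; the paper's detour yields the slightly stronger fact that $\bbk[X]$ is $\bbk[X]^{U'}$-flat, but that is equivalent to equidimensionality once $\bbk[X]^{U'}$ is polynomial and $X$ is Cohen--Macaulay, so nothing essential is lost. Finally, the obstacle you anticipate is not one: $\bbk[\ca]_0=\bbk[X]_{(0)}=\bbk[X]^G=\bbk$ holds by construction of the height filtration, since the only weight-$0$ monomial in the $f_i$ is the constant.
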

\begin{proof}
The idea is the same as in the proof of the similar result for $U$-invariants in 
\cite[Theorem\,5.5]{aura}. We  use the fact that in our situation
$\gr X$ is an $\eus S$-variety whose monoid of dominant weights is 
generated by a sparse set of fundamental weights.

Let $\varpi_{1},\dots, \varpi_{m}$ be the weights of free generators of $\bbk[X]^U$.
Set  $\Gamma=\sum_{i=1}^m \BN \varpi_{i}$.
It follows from the hypotheses on weights that $\bbk[X]$ is a multiplicity free 
$G$-module, i.e., $X$ is a spherical $G$-variety \cite[Theorem\,2]{VK}. Therefore, 
$\bbk[X]$ is isomorphic to $\bigoplus_{\lb\in\Gamma}\sfr(\lb)$ as $G$-module
and $\gr X\simeq \gc(\Gamma)$.

By \cite[\S 5]{po86}, there exists a $G$-variety $Y$ and a  function $q\in\bbk[Y]^{G}$
such that $\bbk[Y]/(q-a)\simeq \bbk[X]$ for all $a\in \bbk^\times$, $\bbk[Y][q^{-1}]
\simeq \bbk[X][q,q^{-1}]$, and $\bbk[Y]/(q) \simeq \bbk[\gr X]$.
Recall some details on constructing $Y$ and $\gr X$.
Let $\varrho$ be the half-sum of the positive coroots. 
For $\lb\in \mathfrak X_+$, we set $\hot(\lb)=(\lb, \varrho)$. 
Letting  $\bbk[X]_{(n)}=\bigoplus_{\lb:\, \hot(\lb)\le n} \sfr(\lb)$, one obtains
an ascending filtration of the algebra $\bbk[X]$:
\[ 
\{0\}\subset \bbk[X]_{(0)}\subset \bbk[X]_{(1)}\subset \cdots \subset \bbk[X]_{(n)}\cdots .
\]
Each subspace $\bbk[X]_{(n)}$ is $G$-stable and finite-dimensional and
$\bbk[X]_{(0)}=\bbk[X]^G=\bbk$. Let $q$ be a formal variable.
Then the algebras 
$\bbk[Y]$ and $\gr(\bbk[X])$ are defined as follows:
\begin{gather*}
\bbk[Y]= \bigoplus_{n=0}^\infty \bbk[X]_{(n)}q^n \subset \bbk[X][q] \ ,
\\
    \gr(\bbk[X])= \bigoplus_{n\ge 0} \bbk[X]_{(n)}/\bbk[X]_{(n-1)} \ .
\end{gather*}
Let $f_1,\ldots,f_m$ be the free generators of $\bbk[X]^{U}$, where $f_i\in \sfr(\varpi_i)^U$, as usual.
They can also be regarded as free generators of $\bbk[\gr X]^U$.
By Theorem~\ref{thm:some-fund}, $\bbk[\gr X]^{U'}$ is freely generated by 
$f_1,\tilde f_1,\dots,f_m,\tilde f_m$ and 
by Theorem~\ref{thm:main-eq-S}, 
$\pi_{\gr X,U'} : \gr X \to (\gr X)\md U'$ is equidimensional.
On the other hand, 
it follows from  \cite[Theorem\,2.4]{odno-sv} that $f_1,\tilde f_1,\dots,f_m,\tilde f_m$
also generate $\bbk[X]^{U'}$. 
Therefore, to conclude that $\bbk[X]^{U'}$ is polynomial, it suffices to know that
$\dim X\md U'=\dim (\gr X)\md U' (=2m)$. To this end, we exploit the following facts:

\noindent
a) \ For an irreducible  $G$-variety $X$, 
there always exists a generic stabiliser for the $U$-action
on $X$ \cite[Corollaire\,1.6]{BLV}, which we denote by $\mathsf{g.s.}(U{:}X)$;

\noindent
b) \ If $X$ is affine, then this generic stabiliser depends only on the $G$-module structure of 
$\bbk[X]$, i.e., on the highest weights of $G$-modules occurring in 
$\bbk[X]$ \cite[Theorem\,1.2.9]{disser}. Consequently,
$\mathsf{g.s.}(U{:}X)=\mathsf{g.s.}(U{:}\gr X)$;

\noindent
c) \  the minimal dimension of $U'$-stablisers in $X$ equals 
$\dim (U' \cap \mathsf{g.s.}(U{:}X))$ \cite[Lemma\,2.5]{odno-sv}. Therefore it is the same
for $X$ and $\gr X$;

\noindent
d) \  Since $U'$ is unipotent,  we have
$\dim X\md U'=\dim X-\dim U'+ \min_{x\in X}\dim U'_x$.

Combining a)-d) yields the desired equality and thereby the assertion that 
$\bbk[X]^{U'}$ is polynomial, of Krull dimension $2m=2\dim X\md U$.

Let $n_i$ 
be the smallest integer such that $\sfr(\varpi_i)\subset \bbk[X]_{(n_i)}$. Using the above 
description of $\bbk[Y]$ and $\bbk[\gr X]^{U'}$, one easily obtains that 
\begin{gather*}
  \bbk[Y]^{U}= \bbk[q,q^{n_1}f_1,\,\ldots,q^{n_m}f_m] \\
  \bbk[Y]^{U'}= \bbk[q,q^{n_1}f_1,\,q^{n_1}\tilde f_1,\ldots,q^{n_m}f_m, \,q^{n_m}\tilde f_m],
\end{gather*}
i.e., both algebras are
polynomial, of Krull dimension $m+1$ and $2m+1$, respectively. 
By a result of Kraft, the first equality implies that $Y$ has rational singularities (see 
\cite[Theorem\,1.6]{br81}, \cite[Theorem\,6]{po86}).
One has the following commutative diagram:
\[ \begin{array}{ccccccc}
C(\Gamma) & \simeq & \gr X & \hookrightarrow & Y & \leftarrow & 
X\times{\mathbb A}^1 \\
  & & \Big\downarrow\vcenter{%
\rlap{$\pi_{\gr X,U'}$}} & & \Big\downarrow\vcenter{%
\rlap{$\pi_{Y,U'}$}} & &      \\
{\mathbb A}^{2m} & \simeq & (\gr X)\md U' & \hookrightarrow & Y\md U' & \simeq &
{\mathbb A}^{2m+1} \\
  & & \Big\downarrow & & \Big\downarrow\vcenter{%
\rlap{$q$}} & &    \\
  & & \{0\} & \hookrightarrow & {\mathbb A}^1 & & 
\end{array}
\]
Consequently,
\[
  \fN_{U'}(\gr X)=\pi_{\gr X,U'}^{-1}(\pi_{\gr X,U'}(\bar 0))=\pi_{Y,U'}^{-1}(\pi_{Y,U'}(\bar 0))=\fN_{U'}(Y) ,
\]
where $\bar 0\in \gr X \subset Y$ is the unique $G$-fixed point of $\gr X$. 
Since $\dim Y=\dim X+1$, $\dim Y\md U'=\dim (\gr X)\md U' +1$, 
and $\pi_{\gr X,U'}$ is equidimensional,  the morphism $\pi_{Y,U'}$ is equidimensional 
as well.
As $Y$ has rational singularities and hence is Cohen-Macaulay, this implies that
 $\bbk[Y]$ is a flat  $\bbk[Y]^{U'}$-module. Since 
$\bbk[Y][q^{-1}]\simeq\bbk[X][q,q^{-1}]$ and 
$\bbk[Y]^{U'}[q^{-1}]\simeq\bbk[X]^{U'}[q,q^{-1}]$,
we conclude that $\bbk[X]$ is a flat $\bbk[X]^{U'}$-module. Thus, $\pi_{X,U'}$ is equidimensional. 
\end{proof}

Our next goal is to obtain the list of all irreducible representations $V$ 
of simple algebraic groups such that $\bbk[V]$ is a free $\bbk[V]^{U'}$-module.
As is well known, $\bbk[V]$ is a free $\bbk[V]^{U'}$-module if and only if
$\bbk[V]^{U'}$ is polynomial and $\pi_{V,U'}$ is equidimensional \cite[Prop.\,17.29]{gerry}.
Therefore, the required
representations are contained  in \cite[Table\,1]{odno-sv} and
our task is to pick from that table the representations having the additional property that 
$\pi_{V,U'}$ is equidimensional.  The numbering of fundamental weights of simple algebraic
groups follows \cite[Tables]{vo}.

\begin{thm}   \label{thm:classif-eq}
Let $G$ be a connected simple algebraic  group with $\rk G\ge 2$ and 
$\sfr(\lb)$ a simple $G$-module. The following conditions are equivalent:
\begin{itemize}
\item[\sf (i)] \  $\bbk[\sfr(\lb)]$ is a free $\bbk[\sfr(\lb)]^{U'}$-module;
\item[\sf (ii)] \  Up to symmetries of the Dynkin diagram of $G$, the pairs $(G,\lb)$  occur
in the following list:
$(\GR{A}{r}, \varpi_1)$,\,$(\GR{B}{r}, \varpi_1)$,\,$(\GR{C}{r}, \varpi_1)$, $r\ge 2$; \\
$(\GR{D}{r}, \varpi_1)$, $r\ge 3$;
\\$(\GR{B}{3}, \varpi_3)$,\,$(\GR{B}{4}, \varpi_4)$,\,$(\GR{D}{5}, \varpi_5)$,\,$(\GR{E}{6}, \varpi_1)$,\,$(\GR{G}{2}, \varpi_1)$.
\end{itemize}
\end{thm}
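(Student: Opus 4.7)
The plan begins with the standard observation that, since $\bbk[\sfr(\lb)]$ is Cohen--Macaulay (in fact polynomial), condition (i) is equivalent to the conjunction of (a) $\bbk[\sfr(\lb)]^{U'}$ being a polynomial algebra, and (b) $\pi_{V,U'}$ being equidimensional (see \cite[Prop.\,17.29]{gerry}). Condition (a) alone was already classified in \cite[Table\,1]{odno-sv}, so the theorem reduces to deciding which entries of that list additionally satisfy (b). The list (ii) will turn out to be a sublist of \cite[Table\,1]{odno-sv}.

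For the positive direction, I would apply Theorem~\ref{thm:deformation}. For each pair $(G,\lb)$ in list (ii), the program is to exhibit the algebra $\bbk[\sfr(\lb)]^U$ explicitly and verify that its free homogeneous generators have fundamental, pairwise distinct, sparse weights. For the defining representations $(A_r,\varpi_1)$ and $(C_r,\varpi_1)$ this is immediate, since $\bbk[V]^U$ is generated by a single element of fundamental weight. The cases $(B_r,\varpi_1)$ and $(D_r,\varpi_1)$ carry an additional quadratic $G$-invariant of trivial weight, which does not strictly fit the hypotheses of Theorem~\ref{thm:deformation}; here one passes to the generic fibre of the $G$-invariant quotient (or, equivalently, restricts attention to the open stratum where the quadratic form is nonzero) and runs a Luna-contraction argument parallel to that in the proof of Theorem~\ref{thm:deformation}. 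The spin representations $(B_3,\varpi_3)$, $(B_4,\varpi_4)$, $(D_5,\varpi_5)$ and the exceptional cases $(E_6,\varpi_1)$ and $(G_2,\varpi_1)$ are handled by consulting the known descriptions of $\bbk[V]^U$ (cf.~\cite{aura}); sparseness of the generating weights can then be read off directly from the Dynkin diagrams.

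The main obstacle is the negative direction: for every entry of \cite[Table\,1]{odno-sv} not present in list (ii), one must exhibit a concrete obstruction to equidimensionality. The strategy is to mimic the proof of Theorem~\ref{thm:eq-sparse1}: identify the generators of $\bbk[V]^U$ and their weights, observe that the resulting configuration of fundamental weights fails to be sparse (or includes a non-fundamental weight), and produce an explicit syzygy among the corresponding $U'$-invariants in the spirit of \eqref{eq:sootnoshenie}. A dimension count then forces $\dim \fN_{U'}(V) > \dim V - \dim V\md U'$, which rules out equidimensionality. In cases where the syzygy is not transparent, the Hilbert--Mumford criterion of Theorem~\ref{thm:1-ps} can be used as a substitute, by producing families of unstable points via suitable one-parameter subgroups of $T$ that overfill the null-cone to the required degree. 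Since \cite[Table\,1]{odno-sv} contains only finitely many entries beyond those in (ii), this case-by-case verification is finite; organising the \emph{ad hoc} arguments uniformly is where the technical content of the proof is concentrated.
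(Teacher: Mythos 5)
Your overall architecture (reduce (i) to ``polynomial $+$ equidimensional'' via \cite[Prop.\,17.29]{gerry}, take polynomiality from \cite[Table\,1]{odno-sv}, and settle equidimensionality case by case) is the paper's, and your negative direction is essentially identical to the paper's: for each bad entry one exhibits a syzygy of the type \eqref{eq:sootnoshenie} coming from a non-sparse pair of fundamental weights among the generators of $\bbk[V]^U$ (with the extra wrinkle, which you do not mention but which is minor, that one of the two $U$-invariants involved may not be a minimal generator of $\bbk[V]^{U'}$ and must first be rewritten in terms of the actual free generators, as the paper does for $(\GR{D}{6},\varpi_6)$).

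The gap is in (ii)$\Rightarrow$(i), in how you handle the $G$-invariant of weight $0$. First, this issue is not confined to $(\GR{B}{r},\varpi_1)$ and $(\GR{D}{r},\varpi_1)$: the pairs $(\GR{B}{3},\varpi_3)$, $(\GR{B}{4},\varpi_4)$, $(\GR{G}{2},\varpi_1)$ carry a quadratic invariant and $(\GR{E}{6},\varpi_1)$ a cubic one, so Theorem~\ref{thm:deformation} does not apply to $V=\sfr(\lb)$ itself in most cases of list (ii), and ``reading off sparseness'' is not a complete argument there either. More seriously, your proposed fix --- passing to the generic fibre of $V\to V\md G$, i.e.\ the open stratum where the invariant is nonzero --- attacks the wrong fibre. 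Since $V$ and $V\md U'$ are conical and $V\md U'$ is an affine space, Lemma~\ref{klod} shows that equidimensionality of $\pi_{V,U'}$ is governed entirely by the fibre over the vertex, i.e.\ by $\fN_{U'}(V)$; and $\fN_{U'}(V)$ lies inside the zero fibre $\fN_G(V)$ because the $G$-invariants are among the generators of $\bbk[V]^{U'}$. Fibre dimension is only upper semicontinuous, so control of the fibres over the open stratum gives no bound on $\dim\fN_{U'}(V)$. The paper's argument goes the other way: it applies Theorem~\ref{thm:deformation} to $X=\fN_G(\sfr(\lb))$, where (by Brion's tables) $\bbk[X]^U$ is polynomial with generators of nonzero, pairwise distinct, fundamental, sparse weights, concludes that $\pi_{X,U'}$ is equidimensional, and then transfers this to $\pi_{\sfr(\lb),U'}$ because $X$ is either all of $\sfr(\lb)$ or a $G$-invariant hypersurface in it (equivalently: the free generators of $\bbk[V]^{U'}$ form a regular sequence iff their images in $\bbk[V]/(F)=\bbk[\fN_G(V)]$ do, $F$ being the $G$-invariant). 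Replacing your generic-fibre step by this null-cone reduction closes the gap.
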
\begin{proof}
{\sf (ii)}$\Rightarrow${\sf (i)}. By \cite[Theorem\,5.1]{odno-sv}, all these representations 
have a polynomial algebra of $U'$-invariants.
Consider $X=\fN_G(\sfr(\lb))$,  the 
null-cone with respect to $G$. The nonzero weights of generators of
$\bbk[\sfr(\lb)]^U$ (and hence the weights of generators of $\bbk[X]^U$) 
given by Brion \cite[p.\,13]{br83} are fundamental and form a sparse set. Consequently,  
Theorem~\ref{thm:deformation} applies to $X$, and  $\pi_{X,U'}$ is equidimensional.
Since $X$ is either a $G$-invariant hypersurface in $\sfr(\lb)$ or equal to $\sfr(\lb)$,
$\pi_{\sfr(\lb),U'}$ is also equidimensional.

{\sf (i)}$\Rightarrow${\sf (ii)}. We have to prove that, for the other items in 
\cite[Table\,1]{odno-sv}, the quotient is not equidimensional. 
The list of such ``bad'' pairs $(G,\lb)$  is:
$(\GR{A}{r},\varpi_2^*)$ with $r\ge 4$;
$(\GR{B}{5}, \varpi_5)$,\,
$(\GR{D}{6}, \varpi_6)$,\,
$(\GR{E}{7}, \varpi_1)$,\,
$(\GR{F}{4}, \varpi_1)$.   Note that $(\GR{A}{3},\varpi_2^*)=(\GR{D}{3}, \varpi_1)$ and this good pair is included in the list in (ii). 
\\
It suffices to 
check that the free generators of $\bbk[\sfr(\lb)]^{U'}$ given in that Table
do not form a regular sequence. 
To this end, we point out a certain relation
in $\bbk[\sfr(\lb)]$
using the fact the weights of generators do not form a sparse set
(cf.  the proof of Theorem~\ref{thm:eq-sparse1}).

The only ``bad'' serial case is  $(\GR{A}{r},\varpi_2^*)$ with $r\ge 4$.
The algebra $\bbk[\sfr(\varpi^*_2)]^{U}$ has free generators 
$f_{2i} $ ($1\le i\le [r/2]$) of  degree $i$ and weight $\varpi_{2i}$,
and for $r$ odd,  there is also the Pfaffian, which is $G$-invariant.
Then $\bbk[\sfr(\varpi^*_2)]^{U'}$ is freely generated by 
$f_2,\tilde f_2,f_4,\tilde f_4,\dots$ (and the Pfaffian, if $r$ is odd).
Using the 4-nodes fragments  of the weight posets $\eus P(\varpi_2)$
and $\eus P(\varpi_4)$
and notation of the proof of Theorem~\ref{thm:eq-sparse1}, we construct a 
$U$-invariant function $f_2q_4-\tilde f_2p_4+p_2\tilde f_4-q_2f_4$
of degree $3$ and weight $\varpi_2+\varpi_4-\ap_2-\ap_3-\ap_4=\varpi_1+\varpi_5$.
(Cf. Eq.~\eqref{eq:sootnoshenie}.)
However, there are no such nonzero $U$-invariants in $\bbk[\sfr(\varpi_2^*)]$.
This yields a relation in $\bbk[\sfr(\varpi^*_2)]$ involving free generators 
$f_2,\tilde f_2,f_4,\tilde f_4 \in \bbk[\sfr(\varpi^*_2)]^{U'}$.

In all other cases, we can do the same thing using a pair of generators of $\bbk[\sfr(\lb)]^{U}$
corresponding to suitable fundamental  weights.  The only difference is that one of 
these two $U$-invariants is \un{not} included in the minimal generating system of 
$\bbk[\sfr(\lb)]^{U'}$ and should be expressed via some other $U'$-invariants. 
Nevertheless, the resulting relation still shows that the $U'$-invariants involved do not 
form a regular sequence.

For instance, consider the pair $(\GR{D}{6},\varpi_6)$. Here the free generators of 
$\bbk[\sfr(\varpi_6)]^{U}$ have the following degrees and weights:
$(1,\varpi_6),\,(2,\varpi_2),\,(3,\varpi_6),\,(4,\varpi_4),\,(4,\un{0})$ \cite{br83}. The invariants
themselves are denoted by $f^{(1)}_6,\, f_2,\, f^{(3)}_6,\, f_4,\, F$, respectively. 
Starting with the $U$-invariants
$f_2$ and $f_4$, we obtain, as a above, a relation of the form 
\beq    \label{eq:f2-f4}
     f_2q_4-\tilde f_2p_4+p_2\tilde f_4-q_2f_4=0
\eeq  
in $\bbk[\sfr(\varpi^*_6)]$. However, $f_4$
is not a generator in $\bbk[\sfr(\varpi_6)]^{U'}$. Taking the second $U'$-invariant in each fundamental $G$-submodule, we obtain nine functions 
$f^{(1)}_6,\, \tilde f^{(1)}_6$, $f_2,\, \tilde f_2$, $f^{(3)}_6,\, \tilde f^{(3)}_6$, $f_4,\, \tilde f_4,\, F$
that generate $\bbk[\sfr(\varpi_6)]^{U'}$.  Here 
$f_4=f^{(1)}_6 \tilde f^{(3)}_6-\tilde f^{(1)}_6 f^{(3)}_6$ and the remaining eight functions freely
generate $\bbk[\sfr(\varpi_6)]^{U'}$.  Substituting this expression for $f_4$ in 
\eqref{eq:f2-f4}, we finally obtain the relation
\[
   f_2q_4-\tilde f_2p_4+p_2\tilde f_4-q_2\bigl(f^{(1)}_6 \tilde f^{(3)}_6-\tilde f^{(1)}_6 f^{(3)}_6\bigr)=0 ,
\]
which shows that the free generators of $\bbk[\sfr(\varpi_6)]^{U'}$ do not form a regular sequence.
\end{proof}

\un{Some} \un{o}p\un{en} p\un{roblems}.  Let $V$ be a rational $G$-module.

1$^o$. {\it Suppose that $V\md U$ is an affine space. Is it true that $V\md U'$ is a complete
intersection?} 

2$^o$. {\it Suppose that $V\md U'$ is an affine space and $G$ has no simple factors 
$SL_2$. Is it true that $V\md U$ is an affine space?}  
(In \cite{odno-sv}, we have proved that $V\md G$
is an affine space, but this seems to be too modest.)

\noindent
Direct computations provide an affirmative answer to both questions if $G$ is simple and
$V$ is a simple $G$-module.

\noindent
{\small  {\bf Acknowledgements.} Part of this work was done while I was visiting
MPIM (Bonn). I thank the Institute for the hospitality and inspiring environment.
I am grateful to E.B.~Vinberg for sending me the preprint \cite{vg08}.}

\end{document}